\newtheorem{theorem}{Theorem}[section]
\newtheorem{lemma}[theorem]{Lemma}
\theoremstyle{definition}
\newtheorem{corollary}[theorem]{Corollary}
\theoremstyle{remark}
\newtheorem{remark}[theorem]{Remark}
\numberwithin{equation}{section}
\newcommand{\dd}{\,{\rm d}}
\newcommand{\dual}[1]{\left\langle {#1} \right\rangle}
\newcommand{\mX}{\mathcal X}
\newcommand{\IX}{\mathcal{I}_{\mathcal X}}
\newcommand{\IV}{\mathcal{I}_{\mathcal V}}
\newcommand{\IQ}{\mathcal{I}_{\mathcal Q}}
\DeclareMathOperator{\sym}{sym}
\begin{document}

\title[Accelerated Gradient and Skew-Symmetric Splitting Methods]{Accelerated Gradient and Skew-Symmetric Splitting Methods for a Class of Monotone Operator Equations}

\author{Long Chen}
\address{Department of Mathematics, University of California at Irvine, Irvine, CA 92697, USA}
\email{chenlong@math.uci.edu}
\thanks{L. Chen and J. Wei are supported by National Science Foundation DMS-2012465 and DMS-2309785.}

\author{Jingrong Wei}
\address{Department of Mathematics, University of California at Irvine, Irvine, CA 92697, USA}
\email{jingronw@uci.edu}

\subjclass[2020]{65L20, 47J25, 65J15, 37N40, 47H05, 65B99 }
\date{DATE}


\keywords{Monotone operator, accelerated iterative method, dynamical system, convergence analysis, Lyapunov function, saddle point problem}

\begin{abstract}
	A class of monotone operator equations, which can be decomposed into sum of the gradient of a strongly convex function and a linear and skew-symmetric operator, is considered in this work. Based on discretization of the generalized gradient flow, gradient and skew-symmetric splitting (GSS) methods are proposed and proved to converge in linear rates. To further accelerate the convergence, an accelerated gradient flow is proposed and accelerated gradient and skew-symmetric splitting (AGSS) methods are developed, which extends the acceleration among the existing works on the convex minimization to a more general class of monotone operator equations. In particular, when applied to smooth saddle point systems with bilinear coupling, a linear convergent method with optimal lower iteration complexity is proposed. The robustness and efficiency of GSS and AGSS methods are verified via extensive numerical experiments.
\end{abstract}

\maketitle


\section{Introduction}
In this paper, we consider iterative methods for solving a class of monotone operator equations
\begin{equation}\label{eq:Ax}
\mathcal A(x) = 0, \quad x\in \mathbb R^n,
\end{equation}
where  $\mathcal A$ is strongly monotone: there exists $\mu > 0$,  for any $x, y \in \mathbb{R}^n,$
\begin{equation}\label{eq: monotone}
(\mathcal{A}(x) - \mathcal{A}(y), x-y) \geq \mu \|x-y\|^2.
\end{equation}
We further assume $\mathcal A(x)$ can be decomposed into
\begin{equation}\label{eq:dec}
\mathcal A(x) = \nabla F(x) + \mathcal N x,
\end{equation}
with a strongly convex $C^1$ function $F$, and a linear and skew-symmetric operator $\mathcal N$, i.e., $\mathcal N^{\intercal} = -\mathcal N$. Then $\mathcal A$ is Lipschitz continuous with constant $L_{\mathcal A}\leq L_F + \| \mathcal N \|$, where $L_F$ is the Lipschitz constant of $\nabla F$. Therefore $\mathcal A$ is monotone and Lipschitz continuous which is also known as inverse-strongly monotonicity~\cite{browder1967construction,liu1998regularization} or co-coercitivity~\cite{zhu1996co}. 
Consequently~\eqref{eq:Ax} has a unique solution $x^{\star}$~\cite{rockafellar1976monotone}. 

\subsection{Contribution}
The goal of this paper is to design iterative methods for computing the solution $x^{\star}$ of \eqref{eq:Ax} based on the splitting~\eqref{eq:dec}, which will be called Gradient and skew-Symmetric Splitting (GSS) methods and its accelerated version Accelerated Gradient and skew-Symmetric Splitting (AGSS) method. Our contribution is summarized as below.  

\begin{itemize}

    \item   Acceleration techniques in optimization have traditionally been limited to convex optimization problems~\cite{nesterov1983method,polyak1964some}. This paper significantly broadens the scope of acceleration beyond its conventional boundaries by extending to a more general class of monotone operator equations. 
    
    \item We leverage ordinary differential equation (ODE) models as a foundation for crafting discrete optimization schemes. Our framework encompasses various numerical methods tailored for solving \eqref{eq:Ax}. Notably, in comparison to the traditional proximal point methods, our novel GSS method and AGSS method offer explicit and practical schemes.

    
    \item For solving non-Hermitian positive definite linear systems $\mathcal Ax  = b$, our new AGSS method achieves the same accelerated rate as Hermitian/skew-Hermitian splitting method (HSS)~\cite{bai2003hermitian} without treating the symmetric part implicitly, and more importantly can handle non-linear problems.

    \item When applied to smooth strongly-convex-strongly-concave saddle point systems with bilinear coupling, our new AGSS methods are few first-order algorithms achieving optimal lower iteration complexity. 
\end{itemize}

The robustness and efficiency of GSS and AGSS methods are verified via extensive numerical experiments: quadratic problems, convection-diffusion partial differential equations, and empirical risk minimization.

\subsection{Gradient and Skew-symmetric Splitting Methods}
We shall develop our iterative methods based on discretization of ODE flows. Namely we treat $x(t)$ as a continuous function of an artificial time variable $t$ and design ODE systems $x'(t) = \mathcal G(x(t))$ so that the $x^{\star}$ is a stable equilibrium point of the corresponding dynamic system, i.e., $\mathcal G(x^*)=0$ and $\lim_{t\to \infty} x(t) = x^*$. Then we apply ODE solvers to obtain various iterative methods. By doing this way, we can borrow the stability analysis of dynamic systems and convergence theory of ODE solvers. 

The most popular choice is the generalized gradient flow:
\begin{equation}\label{eq:intro gradient flow}
x^{\prime}(t) = - \mathcal{A}(x(t)) = - \nabla F( x(t)) - \mathcal N(x(t)).
\end{equation}
One can easily show the exponential stability of $x^{\star}$ using the monotonicity of $\mathcal A$. Although $\mathcal A$ is not the gradient of some scalar function when $\mathcal N\neq 0$, we still use the name convention for the convex case $\mathcal N = 0$. 
%

\begin{table}[htp]
\small
	\centering
	\caption{GSS methods for solving $x^{\prime}(t) = - \nabla F( x(t)) - \mathcal N(x(t))$.} 
	\renewcommand{\arraystretch}{1.25}
	\begin{tabular}{@{} c c c  c c c c @{}}
	\toprule
  $\nabla F$   & $\mathcal N$  & Algorithms & Complexity  &  Analysis
\smallskip \\  \hline 

 Implicit    & Implicit & \eqref{intro:implicit}, \eqref{eq:implicit} & $\ln ( |\ln \epsilon |) $  & \eqref{eq:IEproof} &

\smallskip \\

  Explicit    & Explicit &  \eqref{intro:explicit}, \eqref{eq: EE gradient} & $\kappa^2 (\mathcal A) |\ln \epsilon |$ & \eqref{eq:EEproof 1} and \eqref{eq:EEproof 2}

\smallskip \\

  Explicit    & Implicit &  \eqref{eq: semi-implicit Euler} & $\kappa (F)  |\ln \epsilon |$ & Theorem \ref{thm:IMEX}

\smallskip \\

 \makecell{  Implicit \\
        ( $\nabla F = \mu I $ ) } & AOR & \eqref{eq: GS method y},  \eqref{eq: GS method 2}   & $\tilde{\kappa}( B^{\sym})|\ln \epsilon |$  & Theorem \ref{thm: linear convergence for the shifted skew-symmetric linear system}  

\smallskip \\
 
 Explicit    & AOR & \eqref{eq: intro nonlinear SOR 1}, \eqref{eq: nonlinear SOR 1}, \eqref{eq: nonlinear SOR 2} & $(\tilde{\kappa}( B^{\sym}) + \kappa (F)) |\ln \epsilon |$ & Theorem \ref{thm: linear convergence for generalized nonlinear system} 

\smallskip \\
\bottomrule
	\end{tabular}
	\label{tab: GSS methods}
\end{table}

Discretization of the generalized gradient flow~\eqref{eq:intro gradient flow} leads to iterative methods for solving~\eqref{eq:Ax}. Various methods will be obtained by treating implicit or explicit discretization of $\nabla F$ and $\mathcal N$, and are summarized in Table \ref{tab: GSS methods}. For easy of references, we list equation numbers in both introduction and later sections. 

We explain accelerated overrelaxation~\cite{hadjidimos1978accelerated,hadjidimos2000successive} method (AOR) listed in Table \ref{tab: GSS methods}. Let $B^{\intercal} = {\rm upper}(\mathcal N)$ be the upper triangular part of $\mathcal N$ and $B^{\sym} = B + B^{\intercal}$ be a symmetrization of $B$. Then based on the splitting
 $\mathcal N = B^{\sym}-2B$ or $\mathcal N = 2 B^{\intercal} - B^{\sym}$, the symmetric part will be treat explicitly and $B$ or $B^{\intercal}$ implicitly. 

The computation complexity is measured by the iteration step to reach an $\epsilon$-solution $x$ such that $\|x - x^{\star}\|^2< \epsilon$, up to a constant factor. Condition number $\kappa(\mathcal A) = L_{\mathcal A}/\mu$, $\kappa(F) = L_F/\mu$, and $\tilde{\kappa}( B^{\sym}) = \| B^{\sym}\|/\mu$.

We briefly go through methods in Table \ref{tab: GSS methods}. The implicit Euler scheme with step size $\alpha_k$ is equivalent to the  proximal method 
\begin{equation}\label{intro:implicit}
(I + \alpha_k \mathcal A) (x_{k+1}) =  x_{k},
\end{equation}
which is unconditionally stable for any $\alpha_k > 0$. For increasing step size $\alpha_k$, the implicit Euler scheme can even achieve super-linear convergence. The explicit Euler scheme gives the generalized gradient descent method:
\begin{equation}\label{intro:explicit}
    x_{k+1} = x_k - \alpha_k \mathcal{A}(x_k).
\end{equation}
For $\displaystyle \alpha_k =  \mu/L_{\mathcal A}^2$,  \eqref{intro:explicit} achieves the linear rate
$$
\| x_{k+1} - x^{\star} \|^2 \leq
 \left( 1- 1/\kappa^2(\mathcal A)\right )\| x_{k} - x^{\star} \|^2.
$$
But this linear rate is pretty slow when $\kappa(\mathcal A)\gg 1$ since the dependence is $\kappa^2(\mathcal A)$. 
In contrast, for convex optimization, i.e., $\mathcal N = 0$, the rate for  the gradient descent method is  $1-c/\kappa(F)$ for step size $\alpha_k = 1/L_F$ and $1-c/\sqrt{\kappa(F)}$ for accelerated gradient methods~\cite{nesterov2003introductory}. 

Based on the splitting $\mathcal N = B^{\sym}-2B$ or $\mathcal N = 2 B^{\intercal} - B^{\sym}$, we propose a new method called Gradient and skew-Symmetric Splitting (GSS) method: 
\begin{equation}\label{eq: intro nonlinear SOR 1}
    \frac{x_{k+1} - x_k}{\alpha} = -\left (\nabla F(x_k)+ B^{\sym}x_k - 2B x_{k+1}\right),
\end{equation}
Notice that  $B$ is lower triangular, $x_{k+1}$ in ~\eqref{eq: intro nonlinear SOR 1} can be computed via forward substitution. We prove that GSS method \eqref{eq: intro nonlinear SOR 1} achieves the linear convergence rate, for $ \alpha = \displaystyle \min \left\{\frac{1}{4\|B^{\sym}\|}, \frac{1}{4L_F} \right\}$,
\begin{equation*}
\| x_k - x^{\star}\|^2 \leq \left (\frac{1}{1 + 4/\max \{\tilde{\kappa}(B^{\sym}), \kappa(F)\} }\right )^{k} 6\| x_0 - x^{\star}\|^2.
\end{equation*}

In summary, in GSS, AOR is used to accelerate the skew-symmetric part and improve the dependence of condition number from $\kappa^2$ to $\kappa$. The convergence rate is validated through a numerical example on quadratic problems in Section \ref{sec: Quadratic problems}.

\subsection{Accelerated Gradient and Skew-symmetric Splitting Methods}
To further accelerate the convergence rate, we introduce an accelerated gradient flow 
\begin{equation}\label{eq:intro AG}
\left \{\begin{aligned}
     x^{\prime} &= y - x ,\\
     y^{\prime} & = x - y - \mu^{-1}(\nabla F(x) + \mathcal N y).
\end{aligned}\right .
\end{equation}
Comparing with the accelerated gradient flow in~\cite{luoDifferentialEquationSolvers2021a} for convex optimization, the difference is the gradient and skew-symmetric splitting $\mathcal A(x) \rightarrow \nabla F(x) + \mathcal N y$. 

\begin{table}[htp]
\small
	\centering
	\caption{AGSS methods for solving $x^{\prime}= y - x , \quad
     y^{\prime} = x - y - \mu^{-1}(\nabla F(x) + \mathcal N y)$.} 
	\renewcommand{\arraystretch}{1.25}
	\begin{tabular}{@{} c c c  c c c c @{}}
	\toprule
  $\nabla F$   & $\mathcal N$  & Algorithm & Complexity  &  Analysis
\smallskip \\  \hline 
  
  Explicit    & Implicit & \eqref{eq: intro IMEX}, \eqref{eq:semi-implicit} & $\sqrt{\kappa(F)} |\ln \epsilon |$  & Theorem \ref{thm: linear convergence of AGSS IMEX scheme}  

\smallskip \\

  Explicit    & Implicit (inexact) & \eqref{eq:inexact-implicit} & $\sqrt{\kappa(F)} |\ln \epsilon |$  & Corollary \ref{cor:inexact IMEX}  

\smallskip \\

   Explicit    & AOR & \eqref{eq:intro-agss}, \eqref{eq:explicit} & $\left (\tilde{\kappa}( B^{\sym}) + \sqrt{\kappa (F)} \right )|\ln \epsilon |$ & Theorem \ref{thm: linear convergence of AGSS explicit scheme}

\smallskip \\

\bottomrule
	\end{tabular}
	\label{tab: AGSS methods}
\end{table}

We propose several iterative schemes based on discretization of \eqref{eq:intro AG} and name them Accelerated Gradient and skew-Symmetric Splitting (AGSS) methods. Provided that a fast solver for computing $ (\beta  I +  \mathcal N)^{-1}$ is available, we give an IMplicit-EXplicit (IMEX) scheme for~\eqref{eq:intro AG}:
\begin{equation}\label{eq: intro IMEX}
\begin{aligned}
   \frac{\hat{x}_{k+1}-x_k}{\alpha_k} &=  y_k - \hat{x}_{k+1},  \\
  \frac{y_{k+1}-y_k}{\alpha_k} &=  \hat{x}_{k+1} - y_{k+1} - \mu^{-1} \left( \nabla F(\hat{x}_{k+1}) + \mathcal Ny_{k+1}\right) , \\
  \frac{x_{k+1}-x_k}{\alpha_k} &= y_{k+1} - x_{k+1}.
\end{aligned}
\end{equation}
The scheme \eqref{eq: intro IMEX} is implicit for $y_{k+1}$ as each iteration needs to solve a linear equation $(\beta  I + \mathcal N)y_{k+1} = b(\hat x_{k+1}, y_k)$ associated to a shifted skew-symmetric system with $\beta = 1 +\mu/\alpha_k$. For fixed step size $\alpha_k = 1/\sqrt{\kappa(F)}$, the convergence rate is accelerated to
\begin{equation*}
\|x_{k+1}-x^{\star}\|^2 + \|y_{k+1}-x^{\star}\|^2 \leq \left (\frac{1}{1+1/\sqrt{\kappa(F)}}\right )^{k} 2 \mathcal{E}_0/\mu,
\end{equation*}
where $\mathcal{E}_0  = D_F(x_0, x^{\star}) + \frac{\mu}{2}\|y_0-x^{\star}\|^2$ and $D_F$ is the Bregman divergence of $F$. For linear systems, comparing with Hermitian/skew-Hermitian splitting method (HSS)~\cite{bai2003hermitian}, we achieve the same accelerated rate without treating the symmetric part implicitly and consequently improve the efficiency. More importantly, AGSS can handle non-linear problems while HSS is restricted to linear algebraic systems. 

We also allow an inexact solver for approximating $( \beta I +  \mathcal N)^{-1}$ using a perturbation argument to control the accumulation of the inner solve error. Formal results are presented in Section  \ref{sec: Inexact Solver for the Shifted Skew Symmetric System}. In practice, we showcase the efficiency of the IMEX scheme through computations on a convection-diffusion model in Section \ref{sec: Convection-diffusion model}.
%

To fully avoid computing $( \beta I +   \mathcal N)^{-1}$, we provide an explicit AGSS scheme combining acceleration of $F$ and AOR technique for $\mathcal N$: 
\begin{equation}\label{eq:intro-agss}
\begin{aligned}
  \frac{\hat x_{k+1}-x_k}{\alpha} &=  y_k - \hat x_{k+1}, \\
   \frac{y_{k+1}-y_k}{\alpha} &= \hat x_{k+1} - y_{k+1}- \mu^{-1}\left( \nabla F(\hat x_{k+1}) + B^{\sym} y_k - 2B y_{k+1} \right), \\
   \frac{x_{k+1}-x_{k}}{\alpha} &= y_{k+1} -\frac{1}{2} (x_{k+1} + \hat x_{k+1}).
\end{aligned}
\end{equation}
For the step size $\displaystyle \alpha= \min \left \{\frac{\mu}{2L_{B^{\sym}}}, \sqrt{\frac{ \mu}{2L}}\right \}$, we obtain the accelerated linear convergence for scheme \eqref{eq:intro-agss}
$$
\|x_{k+1}-x^{\star}\|^2 \leq \left (\frac{1}{1 + 1/\max\{4\tilde{\kappa}(B^{\sym}), \sqrt{8\kappa(F)}\}}\right )^{k}\frac{2\mathcal E_0^{\alpha B}}{\mu}
$$
where $\mathcal E^{\alpha B}_0  = D_F(x_0, x^{\star}) + \frac{1}{2}\|y-x^{\star}\|^2_{\mu I - 2\alpha B^{\sym}}$ is nonnegative according to our choice of the step size.

In summary, AGSS based on the accelerated gradient flow \eqref{eq:intro AG} can be applied to the convex part to further improve the dependence of $\kappa(F)$ to $\sqrt{\kappa(F)}$. 

\subsection{Nonlinear Saddle Point Systems with bilinear coupling}

The proposed accelerated methods has wide applications. As an example, we apply to strongly-convex-strongly-concave saddle point systems with bilinear coupling:
    \begin{equation}\label{eq:saddle}
        \min_{u\in \mathbb R^m} \max_{p\in \mathbb R^n} \mathcal L(u,p) := f(u) - g(p) + (Bu, p),
    \end{equation}
where $f, g$ are smooth, strongly convex functions with Liptschitz continuous gradients and $B\in \mathbb R^{n\times m}, m\geq n,$ is a full rank matrix. By a sign change, the saddle point system $\nabla \mathcal L(u,p)=0$ becomes~\eqref{eq:Ax} with $x = (u,p)^{\intercal}$, $F(u,p) = f(u) + g(p)$ and $\mathcal N = \begin{pmatrix} 0 & \;  B^{\intercal} \\ -B & \;  0 \end{pmatrix}$. 

Due to the $2\times 2$ block structure, preconditioners can be used to further improve the convergence. Let $\IV, \IQ$ be metric induced by designated symmetric and positive definite matrices and $\IX = {\rm diag} (\IV, \IQ)$. The convexity constant of $f, g$  is denoted by $\mu_f, \mu_g$ under the metric $\IV, \IQ$, respectively. Denote by $L_S = \lambda_{\max}(\IQ^{-1}B\IV^{-1}B^{\top})$ and $\tilde{\kappa}_{\IX}(\mathcal N)=\sqrt{L_S/(\mu_f\mu_g)}$. We summarize GSS and AGSS methods for solving the saddle point problem \eqref{eq:saddle} in Table \ref{tab: saddle point algorithms}. 

\begin{table}[htp]
\footnotesize
	\centering
	\caption{Summary of GSS and AGSS for solving $\min_u \max_p f(u) - g(p) + (Bu, p)$.} 
	\renewcommand{\arraystretch}{1.25}
	\begin{tabular}{@{} c c c c  c c c c @{}}
	\toprule
Method &   $\nabla f, \nabla g$   & $\mathcal N$  & Algorithm & Complexity  &  Analysis
 \smallskip \\  \hline

 GSS   & Explicit  & AOR & \eqref{eq:GSS_saddle} & $\kappa_{\IV}(f)+ \tilde{\kappa}_{\IX}(\mathcal N)+\kappa_{\IQ}(g)|\ln \epsilon |$ & Theorem \ref{thm: linear convergence for GSS_saddle}
 
\smallskip \\

GSS   & Implicit  & AOR & \eqref{eq:intro IMEX saddle prox}, \eqref{eq:IMEX saddle prox}  & $\tilde{\kappa}_{\IX}(\mathcal N) |\ln \epsilon |$ & Theorem \ref{thm: linear convergence for the saddle prox}
 
\smallskip \\

AGSS &    Explicit    & AOR & \eqref{eq:intro explicit saddle}, \eqref{eq:explicit saddle} & $\sqrt{\kappa_{\IV}(f)+ \tilde{\kappa}_{\IX}^2(\mathcal N)+\kappa_{\IQ}(g)} |\ln \epsilon |$  &  Theorem \ref{thm: linear convergence of explicit scheme saddle}

\smallskip \\

AGSS    & Explicit    & Implicit & \eqref{eq: intro semi-implicit saddle}, \eqref{eq:semi-implicit saddle} & $\sqrt{\kappa_{\IV}(f)+\kappa_{\IQ}(g)} |\ln \epsilon | $ & Theorem \ref{thm: linear convergence of saddle IMEX scheme}

\medskip \\

\bottomrule
	\end{tabular}
	\label{tab: saddle point algorithms}
\end{table}

Let $x = (u,p)$ and $y = (v,q)$. Explicit AGSS method~\eqref{eq:intro-agss} applied to the saddle point system \eqref{eq:saddle} but with preconditioners $\IV^{-1}$ and $\IQ^{-1}$ becomes:
\begin{equation}\label{eq:intro explicit saddle}
\begin{aligned}
  \frac{\hat x_{k+1}-x_k}{\alpha} &= y_k - \hat x_{k+1}, \\
  \frac{v_{k+1}-v_k}{\alpha} &=\hat{u}_{k+1} - v_{k+1} -\mu_f^{-1} \IV^{-1}\left( \nabla f(\hat{u}_{k+1}) + B^{\intercal}q_{k}\right) , \\
     \frac{q_{k+1}-q_k}{\alpha} &=  \hat{p}_{k+1} - q_{k+1}-\mu_g^{-1} \IQ^{-1} \left( \nabla g(\hat{p}_{k+1}) -2Bv_{k+1} + Bv_k \right) , \\
   \frac{x_{k+1}-x_{k}}{\alpha} &=y_{k+1} - x_{k+1} + \frac{1}{2}(x_{k+1} - \hat x_{k+1}).
\end{aligned}
\end{equation}
For the step size $\alpha =  1/\max \left \{2\tilde{\kappa}_{\IX}(\mathcal N), \sqrt{2\kappa_{\IV}(f)}, \sqrt{2\kappa_{\IQ}(g)}\right \}$, we achieve the accelerated rate
\begin{equation*}
\begin{aligned}
&\mu_f \|u_{k}-u^{\star}\|_{\IV}^2 + \mu_g\|p_{k}-p^{\star}\|_{\IQ}^2  \\
&\leq \left (1+ 1/\max \left \{4\tilde{\kappa}_{\IX}(\mathcal N), 2\sqrt{2\kappa_{\IV}(f)}, 2\sqrt{2\kappa_{\IQ}(g)}\right \}\right)^{-k} 2\mathcal{E}_0^{\alpha B}.
\end{aligned}
\end{equation*}
where $\mathcal{E}^{\alpha B}(x_0, y_0) > 0$ is the initial value of a tailored Lyapunov function defined as \eqref{eq: acc discrete Lyapunov saddle}. In particular when $\IV$ and $\IQ$ is set to identity matrices, $\tilde{\kappa}(\mathcal N) = \|B\|/\sqrt{\mu_f\mu_g}$. The linear rate matches the lower complexity bound of the first-order algorithms for saddle point problems established in~\cite{zhang2022lower}. We showcase the optimality of our algorithm and compare it with others by solving the empirical risk minimization in Section \ref{sec: Empirical risk minimization}.

If the skew-symmetric part is treated implicitly, we obtain an IMEX AGSS scheme:
\begin{equation}\label{eq: intro semi-implicit saddle}
\begin{aligned}
   \frac{\hat{x}_{k+1}-x_k}{\alpha_k} &= y_k - \hat{x}_{k+1},  \\
       \frac{v_{k+1}-v_k}{\alpha_k} &= \hat{u}_{k+1} - v_{k+1}-  \mu_f^{-1} \IV^{-1}\left( \nabla f(\hat{u}_{k+1}) + B^{\intercal}q_{k+1}\right) , \\
    \frac{q_{k+1}-q_k}{\alpha_k} &= \hat{p}_{k+1} - q_{k+1} - \mu_g^{-1} \IQ^{-1} \left( \nabla g(\hat{p}_{k+1}) -Bv_{k+1}\right) , \\
   \frac{x_{k+1}-x_k}{\alpha_k} &= y_{k+1} - x_{k+1}.
\end{aligned}
\end{equation}
which requires solving a linear saddle point system
\begin{equation}\label{eq: intro I+N saddle}
\begin{pmatrix}
 (1+\alpha_k)\mu_f \IV & \alpha_k B^{\intercal} \\
 -\alpha_k B & (1+\alpha_k) \mu_g \IQ
\end{pmatrix}\begin{pmatrix}
 v_{k+1} \\
 q_{k+1}
\end{pmatrix}  = b(\hat{x}_{k+1},y_k),
\end{equation}
with $b(\hat{x}_{k+1},y_k) = \IX( y_k + \alpha_k \hat x_{k+1} )- \alpha_k \nabla F(\hat x_{k+1})$. 
Consequently the linear rate, $$\left (1+1/\max \left \{\sqrt{\kappa_{\IV}(f)}, \sqrt{\kappa_{\IQ}(g)} \right \} \right)^{-1}$$ is free of $\tilde{\kappa}_{\IX}(\mathcal N)$. Inexact inner solvers for solving \eqref{eq: intro I+N saddle} is allowed.

Given proximal operators of $f$ and $g$, we can treat $F$ implicitly and obtain the primal-dual method with AOR:
\begin{equation}\label{eq:intro IMEX saddle prox}
\begin{aligned}
    u_{k+1} &= \operatorname{prox}_{\alpha/\mu_f f, \IV}(u_k - \alpha \mu_f^{-1}\IV^{-1}B^{\intercal}p_k), \\
   p_{k+1}  &= \operatorname{prox}_{\alpha/\mu_g g, \IQ}(p_k -  \alpha \mu_g^{-1} \IQ^{-1}B(u_k - 2u_{k+1})),
\end{aligned}
\end{equation}
where the generalized proximal operator is defined as
\begin{equation*}
\begin{aligned}
    \operatorname{prox}_{\gamma f, \IV}(v)&:=\underset{u}{\operatorname{argmin}} ~f(u)+\frac{1}{2 \gamma}\|u-v\|^2_{ \IV}, \\
     \operatorname{prox}_{\sigma g, \IQ}(q)&:=\underset{p}{\operatorname{argmin}} ~g(p)+\frac{1}{2 \sigma}\|p-q\|^2_{\IQ}.
\end{aligned}
\end{equation*}
For  $ \alpha = 1/(2\tilde{\kappa}_{\IX}(\mathcal N))$, we have
\begin{equation*}
\| x_k - x^{\star}\|^2_{ \mu \IX} \leq \left (1 + 1/(2\tilde{\kappa}_{\IX}(\mathcal N)\right )^{-k} 3\| x_0 - x^{\star}\|^2_{  \mu \IX}.
\end{equation*}
Again when $\IV$ and $\IQ$ are identity matrices, $\tilde{\kappa}(\mathcal N) = \|B\|/\sqrt{\mu_f\mu_g}$ is indeed the optimal lower bound shown in~\cite{zhang2022lower}. 

%

\subsection{Related Work}
More specific class of monotone operator equation is particularly of interest, since it is closely related to convex optimization and saddle point problems. For instance, when $\mathcal N = 0$,~\eqref{eq:Ax} becomes the convex minimization problem 
\begin{equation}\label{eq: convex min}
    \min_x F(x).
\end{equation}
We refer to the book~\cite{nesterov2003introductory} on the first-order methods for solving~\eqref{eq: convex min} especially the accelerated gradient method which achieved the linear rate $1-1/\sqrt{\kappa(F)}$ with optimal lower complexity bound. Most acceleration methods are restricted to convex optimization and extension to general monotone operators is rare. We significantly broaden the scope of acceleration. 

Another important application is the strongly-convex-strongly-concave saddle point problem with bilinear coupling \eqref{eq:saddle}. 
%
In~\cite{zhang2022lower}, it is shown that if the duality gap $\Delta(u,p) := \max_q \mathcal L(u, q) -  \min_v \mathcal L(v, p) $ 
is required to be bounded by $\epsilon$, the number of iterations for the  first-order methods is at least 
\begin{equation}\label{eq:lowercomplexity}
\Omega\left(\sqrt{\kappa(f)+\tilde \kappa^2(\mathcal N)+\kappa(g)}  |\ln \epsilon |\right)
\end{equation}
where $\kappa(f), \kappa(g)$ are condition numbers of $f, g$ respectively, $\tilde{\kappa}(\mathcal N) = \|B\|/\sqrt{\mu_f \mu_g}$ measuring the coupling, and $\Omega$ means the iteration complexity bounded below asymptotically, up to a constant. Only few optimal first-order algorithms have been developed recently~\cite{metelev2024decentralized,thekumparampil2022lifted}. If further the proximal oracles for $f, g$ are allowed, the lower complexity bound of the class of first-order algorithms is $\displaystyle \Omega\left (\tilde{\kappa}(\mathcal N)|\ln \epsilon| \right)$ and  proximal methods achieving this lower bound can be found in~\cite{chambolle2011first,chambolle2016ergodic}. Our new method AGSS applied to the saddle point problems becomes first-order algorithms achieving optimal lower iteration complexity and achieve robust performance; see Section \ref{sec: Empirical risk minimization} for the comparison with other optimal complexity methods~\cite{kovalev2022accelerated,li2023nesterov,mokhtari2020convergence,thekumparampil2022lifted}. 

When $\mu_g = 0$, transformed primal-dual (TPD) methods ~\cite{chen2023transformed,chen2024transformed} can be combined with the accelerated gradient flow. Here, no transformation is included in the AGSS algorithms. 




One more important class is for linear operator $\mathcal A$ which has the following decomposition:
$$
\mathcal A = \mathcal A^{\rm s} + \mathcal N,
$$
where $\mathcal A^{\rm s} = (\mathcal A + \mathcal A^{\intercal})/2$ is the symmetric (Hermitian for complex matrices) part and $\mathcal N = (\mathcal A - \mathcal A^{\intercal})/2$ is the skew-symmetric part. The condition $\mathcal A$ is monotone is equivalent to  $\mathcal A^{\rm s}$ is symmetric and positive definite (SPD) and $\lambda_{\min}( \mathcal A^{\rm s})\geq \mu$. Bai, Golub, and Ng in~\cite{bai2003hermitian} proposed the Hermitian/skew-Hermitian splitting method (HSS) for solving general non-Hermitian positive definite linear systems $\mathcal Ax  = b$:
\begin{equation}\label{eq:HSS}
    \begin{aligned}
    (\alpha I+ \mathcal A^{\rm s}) x_{ k+\frac{1}{2}} &=(\alpha I-\mathcal N) x_{k}+b \\ 
    (\alpha I+\mathcal N) x_{k+1} &=(\alpha I-\mathcal A^{\rm s}) x_{k+\frac{1}{2}}+b.
\end{aligned}
\end{equation}
The iterative method~\eqref{eq:HSS} solves the equations for the symmetric (Hermitian) part and skew-symmetric (skew-Hermitian) part alternatively. For the HSS method~\eqref{eq:HSS}, efficient solvers for linear operators $(\alpha I + \mathcal A^{\rm s})^{-1}$ and $(\alpha I+ \mathcal N)^{-1}$ are needed. A linear convergence rate of $\displaystyle \frac{\sqrt{\kappa (\mathcal A^{\rm s})}-1}{\sqrt{\kappa (\mathcal A^{\rm s})}+1}$ can be achieved for an optimal choice of parameter $\alpha$. Several variants of HSS are derived and analyzed in~\cite{bai2007accelerated,bai2007successive}. 

For linear systems, comparing with HSS, AGSS achieves the same accelerated rate without treating the symmetric part implicitly, i.e., no need to compute $(\alpha I + \mathcal A^{\rm s})^{-1}$, and consequently significantly improve the efficiency. More importantly, AGSS can handle non-linear problems while HSS is restricted to linear algebraic systems only.

For the rest of the paper, we first review convex function theory and Lyapunov analysis theory in Section \ref{sec: preliminary}, as basic preliminaries for our proofs. The generalized gradient flow and convergence of GSS methods for the considered monotone equation are proposed in Section \ref{sec: gradient methods}. Then in Section \ref{sec:AGSS flow and schemes}, we extend the preconditioned accelerated gradient flow in convex minimization and derive AGSS methods with accelerated linear rates. As applications, we give optimal algorithms for strongly-convex-strongly-concave saddle point system with bilinear coupling in Section \ref{sec:saddle point systems}. In Section \ref{sec: numerical ex}, we validate the algorithms with numerical examples. Concluding remarks are addressed in Section \ref{sec: conclusion}.

\section{Preliminary}\label{sec: preliminary}
In this section, we follow~\cite{nesterov2003introductory} to introduce notation and  preliminaries in convex function theory. We also briefly review a unified convergence analysis of first order convex optimization methods via strong Lyapunov functions established in~\cite{chen2021unified}. 

\subsection{Convex Functions}
Let $\mX$ be a Hilbert space with inner product $(\cdot, \cdot)$ and norm $\|\cdot \|$. $\mX^{\prime}$ is the linear space of all linear and continuous mappings, which is called the dual space of $\mX$, and $\langle \cdot, \cdot \rangle$ denotes the duality pair between $\mX^{\prime}$ and $\mX$. For any proper closed convex and $C^1$ function $F: \mX \rightarrow \mathbb{R}$, the Bregman divergence of $F$ is defined as
$$D_{F}(x,y) : = F(x)-F(y)-\langle \nabla F(y), x-y\rangle.$$
We say $F \in \mathcal{S}_{\mu}$ or $\mu$-strongly convex with $\mu > 0$ if $F$ is differentiable and  
$$
D_{F}(x, y)  \geqslant \frac{\mu}{2}\left\|x-y\right\|^{2}, \quad \forall x, y \in \mX.
$$
 In addition, denote by $F \in \mathcal{S}_{\mu, L}$ if $F \in \mathcal{S}_{\mu}$ and there exists $L>0$ such that
$$
D_{F}(x, y) \leqslant \frac{L}{2}\left\|x-y\right\|^{2}, \quad \forall x, y \in \mX.
$$
For fixed $y \in \mX, D_{F}(\cdot, y)$ is convex as $F$ is convex and 
$$
\nabla D_{F}(\cdot, y) = \nabla F(\cdot) - \nabla F(y).
$$
In general $D_F(x, y)$ is non-symmetric in terms of $x$ and $y$. 
A symmetrized Bregman divergence is defined as
\begin{equation*}\label{eq: symmetrized Bregman divergenve}
    \langle\nabla F(y)-\nabla F(x), y-x\rangle=D_{F}(y, x)+D_{F}(x, y).
\end{equation*}
By direct calculation, we have the following three-terms identity.
\begin{lemma}[Bregman divergence identity~\cite{chen1993convergence}]
If function $F: \mX \rightarrow \mathbb{R}$ is differentiable, then for any $x, y, z \in \mX$, it holds that
\begin{equation}\label{eq: Bregman divergence identity}
   \langle\nabla F(x)-\nabla F(y), y-z\rangle=D_{F}(z, x)-D_{F}(z, y)-D_{F}(y, x). 
\end{equation}
\end{lemma}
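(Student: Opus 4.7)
The plan is a direct verification from the definition $D_F(a,b) = F(a) - F(b) - \langle \nabla F(b), a-b\rangle$. I would expand each of the three Bregman divergences appearing on the right-hand side of \eqref{eq: Bregman divergence identity}, namely
\begin{align*}
D_F(z,x) &= F(z) - F(x) - \langle \nabla F(x), z-x\rangle,\\
D_F(z,y) &= F(z) - F(y) - \langle \nabla F(y), z-y\rangle,\\
D_F(y,x) &= F(y) - F(x) - \langle \nabla F(x), y-x\rangle,
\end{align*}
substitute, and collect like terms.

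On the function-value side, the contributions $F(z) - F(x)$ from $D_F(z,x)$ cancel against $-F(z) + F(y)$ from $-D_F(z,y)$ together with $-F(y) + F(x)$ from $-D_F(y,x)$, so all $F$-values disappear. On the inner-product side, the terms involving $\nabla F(x)$ combine as $-\langle \nabla F(x), z-x\rangle + \langle \nabla F(x), y-x\rangle = \langle \nabla F(x), y-z\rangle$, while the $\nabla F(y)$ term is $\langle \nabla F(y), z-y\rangle = -\langle \nabla F(y), y-z\rangle$. Summing these two yields exactly $\langle \nabla F(x) - \nabla F(y), y-z\rangle$, proving the identity.

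There is no genuine obstacle here: the statement is a purely algebraic three-term identity that follows from the definition of $D_F$, and differentiability of $F$ is used only to make sense of $\nabla F$ in the definition. No convexity, no regularity beyond differentiability, and no additional hypotheses on $x,y,z$ are required.
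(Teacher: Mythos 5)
Your proof is correct and is exactly the "direct calculation" the paper alludes to (it states the identity with a citation and no written-out proof): expand the three Bregman divergences by definition, cancel the function values, and regroup the gradient terms. Nothing further is needed, and you rightly note that only differentiability of $F$ is used.
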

When $F(x) = \frac{1}{2}\|x\|^2$, identity~\eqref{eq: Bregman divergence identity} becomes 
\begin{equation*}
 (x-y, y-z) =  \frac{1}{2}\|z-x\|^2 - \frac{1}{2}\|z-y\|^2 - \frac{1}{2}\|y-x\|^2.
\end{equation*}

For a non-smooth function $F$, the subgradient is a set-valued function defined as
$$\partial F(x):=\left\{\xi \in \mathcal X^{\prime}: F(y)-F(x) \geqslant\langle \xi, y-x\rangle, \quad \forall y \in \mathcal X \right\}.$$
For a proper, closed and convex function $F: \mathcal X \to \mathbb{R}$, $\partial F(x)$ is a nonempty bounded set for $x\in \mathcal X$~\cite{nesterov2003introductory}. It is evident that the subgradient for smooth $F$ is a single-valued function reduced to the gradient, that is $\partial F(x) = \{\nabla F(x)\}$. We extend the set $\mathcal S_{\mu}$ using the notion of subgradient: $F \in \mathcal S_{\mu}$ if for all $x, y \in \mathcal X$,
$$F(y) \geqslant F(x)+\langle \xi, y-x\rangle+\frac{\mu}{2}\|x-y\|^2, \quad \forall \xi \in \partial F(x).$$

Given a convex function $F$, define the proximal operator of $F$ as
\begin{equation}\label{eq:proxl2}
     \operatorname{prox}_{\gamma f, \IV}(y):=\underset{x}{\operatorname{argmin}} ~F(x)+\frac{1}{2 \gamma}\|x-y\|^2,
\end{equation}
for some $\gamma > 0$. The proximal operator is well-defined since the function $F(\cdot)+\frac{1}{2 \gamma}\|\cdot-y\|^2$ is $1/\gamma$-strongly convex. 

\subsection{Inner Products and Preconditioners}\label{sec: inner product}
The standard $l^{2}$ dot product of Euclidean space $(\cdot,\cdot)$ is usually chosen as the inner product and the norm induced is the Euclidean norm. We now introduce inner product $(\cdot, \cdot)_{\IX}$ induced by a given SPD operator $\IX: \mX\to \mX$ defined as follows
$$
(x, y)_{\IX}:=(\IX x, y) = (x, \IX y), \quad \forall x, y \in \mX
$$ and associated norm $\|\cdot\|_{\IX}$, given by
$\|x\|_{\IX}= (x, x)^{1/2}_{\IX}.$
The dual norm w.r.t the $\IX$-norm is defined as: for $\ell \in \mX^{\prime}$
$$
\|\ell\|_{\mX^{\prime}}=\sup _{0 \neq x \in \mX} \frac{\langle\ell, x\rangle}{\| x \|_{\IX}} =\left( \ell, \ell\right )_{ \IX^{-1}}^{1/ 2} = \left( \IX^{-1} \ell, \ell\right )^{1/ 2}.
$$

We shall generalize the convexity and Lipschitz continuity with respect to $\IX$-norm: we say $F \in \mathcal{S}_{\mu_{ \IX}}$ with $\mu_{\IX} \geqslant 0$ if $F$ is differentiable and
$$
D_{F}(x, y)\geqslant \frac{\mu_{\IX}}{2}\left\|x-y\right\|^{2}_{\IX}, \quad \forall x, y  \in \mX.
$$
In addition, denote $F \in \mathcal{S}_{\mu_{ \IX}, L_{\IX}}$ if $F \in \mathcal{S}_{\mu_{\IX}}$ and there exists $L_{\IX}>0$ such that
$$
D_{F}(x,y) \leq \frac{L_{\IX}}{2}\left\|x-y\right\|^{2}_{\IX}, \quad \forall  x, y   \in \mX.
$$
The gradient method in the inner product $ \IX$ reads as:
\begin{equation}\label{eq:PGD}
x_{k+1} = x_k - \IX^{-1} \nabla F(x_k),
\end{equation}
where $\IX^{-1}$ can be understood as a preconditioner. The convergence rate of the preconditioned gradient descent iteration~\eqref{eq:PGD} is $1-1/\kappa_{\IX}(F)$ with condition number $\kappa_{\IX}(F) = L_{F, \IX}/\mu_{F, \IX}$. 
To simplify notation, we skip $\IX$ in the constants $\mu$ and $ L$, e.g., write $\mu_{F, \IX}$ as $\mu_F$, but keep in the condition number, e.g. $\kappa_{\IX}(F)$, to emphasize that the condition number is measured in the $\IX$ inner product.

For two symmetric operators $A, B: \mX\to \mX$, we say $A \geq (>) B$ if $A  -  B$ is positive semidefinite (definite). Therefore $A> 0 $ means $A$ is SPD. One can easily verify that $A\geq B$ is equivalent to $\lambda_{\min}(B^{-1}A)\geq 1$ or $\lambda_{\max}(A^{-1}B)\leq 1$.

For a general symmetric matrix $A$, we define $$\|x\|_A^2 :=(x,x)_A := x^{\intercal}Ax.$$  In general, $\|\cdot \|_A$ may not be a norm if $A$ is not SPD. However the identity for squares still holds:
\begin{equation}\label{eq:squares}
	2(a,b)_A= \|a\|_A^2 + \|b\|_A^2 - \|a-b\|_A^2.
\end{equation}

\subsection{Lyapunov Analysis}
In order to study the stability of an equilibrium $x^*$ of a dynamical system defined by an autonomous system
\begin{equation}\label{autonomous system}
    x' = \mathcal{G}(x(t)),
\end{equation}
Lyapunov introduced the so-called Lyapunov function $\mathcal{E}(x)$~\cite{Khalil:1173048,Haddad2008}, which is nonnegative and the equilibrium point $x^*$ satisfies $\mathcal{E}\left(x^{*}\right)=0$ and the Lyapunov condition:
$-\nabla \mathcal{E}(x) \cdot \mathcal{G}(x) > 0$ for $x$ near the equilibrium point $x^{*}$. 
Then the (local) decay property of $\mathcal{E}(x)$ along the trajectory $x(t)$ of the autonomous system~\eqref{autonomous system} can be derived immediately:
$$
\frac{\mathrm{d}}{\mathrm{d} t} \mathcal{E}(x(t))=\nabla \mathcal{E}(x) \cdot x^{\prime}(t)=\nabla \mathcal{E}(x) \cdot \mathcal{G}(x)<0.
$$

To further establish the convergence rate of $\mathcal{E}(x(t))$, Chen and Luo~\cite{chen2021unified} introduced the strong Lyapunov condition: $\mathcal{E}(x)$ is a Lyapunov function and there exist constant $q \geqslant 1$, strictly positive function $c(x)$ and function $p(x)$ such that
\begin{equation*}
- \nabla \mathcal{E}(x) \cdot \mathcal{G}(x) \geq c(x) \mathcal{E}^{q}(x)+ p^{2}(x)  
\end{equation*}
holds true near $x^{*}$. From this, one can derive the exponential decay $\mathcal{E}(x(t))=O\left(e^{-c t}\right)$ for $q=1$ and the algebraic decay $\mathcal{E}(x(t))=O\left(t^{- 1 /(q-1)}\right)$ for $q>1$. Furthermore if $\|x - x^*\|^2 \leq C \mathcal E(x)$, then we can derive the exponential stability of $x^*$ from the exponential decay of Lyapunov function $\mathcal E(x)$. 

Note that for an optimization problem, we have freedom to design the flow, i.e., the vector field $\mathcal G(x)$, and Lyapunov function $\mathcal E(x)$.


\section{Gradient and skew-Symmetric Splitting Methods}\label{sec: gradient methods}
	In this section, we consider the generalized gradient flow
	\begin{equation}\label{eq: gradient flow}
x^{\prime}(t) = - \mathcal{A}(x(t)),
\end{equation}
and derive several iterative methods for solving $\mathcal A(x) = 0$ by applying ODE solvers to~\eqref{eq: gradient flow}. Based on the gradient and skew-symmetric splitting~\eqref{eq:dec}, we apply Accelerated OverRelaxation (AOR)~\cite{hadjidimos1978accelerated,hadjidimos2000successive} technique to the non-symmetric part to obtain explicit schemes with linear convergence rate $(1+ c/ \kappa (\mathcal A))^{-1}$.


\subsection{Stability}
Define the quadratic Lyapunov function:
\begin{equation}\label{eq: Eq}
\mathcal{E}_q(x) = \frac{1}{2}\|x-x^{\star}\|^2.
\end{equation}
For $x(t)$ solving the generalized gradient flow~\eqref{eq: gradient flow}:
\begin{equation*}
\begin{aligned}
        \frac{\dd}{\dd t} \mathcal{E}_q(x(t)) &= \langle \nabla \mathcal E_q(x), x' \rangle = -\langle x- x^{\star}, \mathcal{A}(x) - \mathcal A(x^{\star}) \rangle \leq -2\mu \mathcal{E}_q(x),
\end{aligned}
\end{equation*}
which leads to the exponential decay
$$
\mathcal{E}_q(x(t))\leq e^{-2\mu t}\mathcal E_q(x(0))
$$
and consequently the solution $x^*$ is an exponentially stable equilibrium point of the dynamic system defined by~\eqref{eq: gradient flow}.

	
\subsection{Implicit Euler Schemes}\label{sec:implicit scheme}
	The implicit Euler scheme for the generalized gradient flow~\eqref{eq: gradient flow} with step size $\alpha_k>0$ is
\begin{equation}\label{eq:implicit}
	x_{k+1} - x_k = -\alpha_k \mathcal A(x_{k+1}),
\end{equation}
which is equivalent to the proximal iteration 
\begin{equation}\label{eq:proximal}
   (I + \alpha_k \mathcal A) (x_{k+1}) =  x_{k}.
\end{equation}
As $\mathcal A$ is strongly monotone and Liptschitz continuous, the operator $(I + \alpha_k \mathcal A)^{-1}$, which is called the resolvent of $\mathcal A$, is nonexpansive~\cite{browder1967construction} and has a unique fixed point~\cite{rockafellar1976monotone}. 
	
Using the identity of squares and the strong monotonicity, we have
\begin{equation}\label{eq:IEproof}
    	\begin{aligned}
		        \mathcal{E}_q(x_{k+1}) - \mathcal{E}_q(x_{k}) ={}& (x_{k+1}-x_k,x_{k+1}-x^{\star}) -\frac{1}{2}\|x_{k+1}-x_k\|^2\\
		={}& -\alpha_k \dual{ \mathcal A(x_{k+1}) - \mathcal A(x^{\star}), x_{k+1}-x^{\star}} - \frac{1}{2}\|x_{k+1}-x_k\|^2\\
		\leq {}&-2\alpha_k \mu  \mathcal{E}_q(x_{k+1}) ,
	\end{aligned}
\end{equation}
from which the linear convergence follows naturally 
\begin{equation*}
    	\|x_{k+1}-x^{\star}\|^2\leq \frac{1}{1+2\alpha_k \mu}\|x_{k}-x^{\star}\|^2.
\end{equation*}
There is no restriction on the step size $\alpha_k$ which is known as the unconditional stability of the implicit Euler method. Choosing $\alpha_k\gg 1$ will accelerate the convergence with the price of solving a regularized nonlinear problem~\eqref{eq:proximal}, which is in general not practical. The implicit Euler method is presented here as a reference to measure the difference of other and more practical methods.

\subsection{Explicit Euler Schemes}
The explicit Euler scheme for the generalized gradient flow~\eqref{eq: gradient flow} leads to the gradient decent method:
\begin{equation}\label{eq: EE gradient}
    x_{k+1} = x_k - \alpha_k \mathcal{A}(x_k).
\end{equation}
We write~\eqref{eq: EE gradient} as a correction of the implicit Euler scheme~\eqref{eq:implicit}:
$$
x_{k+1}-x_k = - \alpha_k\mathcal A(x_{k+1}) + \alpha_k ( \mathcal A(x_{k+1}) - \mathcal A(x_{k})),
$$
and follow~\eqref{eq:IEproof} to obtain
\begin{equation}\label{eq:EEproof 1}
\begin{aligned}
        \mathcal{E}_q(x_{k+1}) - \mathcal{E}_q(x_{k}) = &(x_{k+1}-x_k,x_{k+1}-x^{\star})- \frac{1}{2}\|x_{k+1} - x_k\|^2  \\
\leq &-2\alpha_k \mu  \mathcal{E}_q(x_{k+1})  - \frac{1}{2}\|x_{k+1} - x_k\|^2\\
&-\alpha_k \dual{ \mathcal A(x_{k}) - \mathcal A(x_{k+1}), x_{k+1}-x^{\star}}.
\end{aligned}
\end{equation}
Here we use the implicit Euler scheme as the reference scheme and call the term $\alpha_k (\mathcal A(x_k) - \mathcal A(x_{k+1}), x_{k+1} - x^{\star})$ in \eqref{eq:EEproof 1} a mis-matching term. Using the Cauchy-Schwarz inequality and the Lipschitz continuity of $\mathcal A$,
\begin{equation}\label{eq:EEproof 2}
\begin{aligned}
&\alpha_k |\dual{ \mathcal A(x_{k}) - \mathcal A(x_{k+1}), x_{k+1}-x^{\star}}| \\
\leq{} &\frac{\alpha_k}{2\mu} \|\mathcal A(x_{k}) - \mathcal A(x_{k+1})\|^2 + \frac{\alpha_k \mu}{2} \|x_{k+1} -x^{\star}\|^2 \\
\leq{} &  \frac{ \alpha_k L_{\mathcal A}^2}{2\mu} \|x_{k+1} -x_k\|^2 + \alpha_k \mu\, \mathcal{E}_q(x_{k+1})
\end{aligned}
\end{equation}
Substitute back to~\eqref{eq:EEproof 1} and let
$\displaystyle \alpha_k = \mu/L_{\mathcal A}^2$, we obtain
$$ \mathcal{E}_q(x_{k+1}) \leq \frac{1}{1+1/\kappa^{2}(\mathcal A)}\mathcal{E}_q(x_{k}).$$
This linear rate is pretty slow when $\kappa(\mathcal A)\gg 1$. We will discuss techniques to improve the dependence $\kappa^2(\mathcal A)$. 


	\subsection{Accelerated Overrelaxation Methods for Shifted skew-Symmetric  Problems}\label{sec:AORlinear}
In this subsection, we consider the shifted skew-symmetric linear system 
	\begin{equation}\label{eq:I+N}
		(\mu I + \mathcal N)x = b,
	\end{equation}
which is a special case of~\eqref{eq:Ax} with $F(x) = \frac{\mu}{2}\|x\|^2 - (b,x)$ and $\mathcal A(x) =  (\mu I + \mathcal N)x - b$. An efficient solver for~\eqref{eq:I+N} is an ingredient of AGSS methods for the nonlinear equation~\eqref{eq:Ax} we shall develop later on. 

The system~\eqref{eq:I+N} can be solved with Krylov subspace methods. For example, minimal residual methods~\cite{idema2007minimal,jiang2007algorithm} based on the Lanczos algorithm require short recurrences of vector storage. The convergence theorem using Chebyshev polynomial~\cite{jiang2007algorithm} shows that the residual converges with a linear rate of $O((1+\mu/\|\mathcal N\|)^{-1})$. Compared with Krylov subspace methods for solving~\eqref{eq:I+N}, our method enjoys a similar linear convergence rate but with a much simpler form and in-space vector storage.

Recall that, for the matrix equation
\begin{equation*}
(D - L - U) x = b,
\end{equation*}
where $D$ a diagonal matrix, and $L$ and $U$ are strictly lower and upper triangular matrix, respectively, the Accelerated OverRelaxation (AOR) method~\cite{hadjidimos1978accelerated,hadjidimos2000successive} with the acceleration parameter $r$ and relaxation factor $\omega > 1$ is in the form 
\begin{equation*}
(D - r L) x_{k+1}=[(1-\omega) D+(\omega-r) L+\omega U] x_{k}+\omega b,
\end{equation*}
which is a two-parameter generalization of the well known successive overrelaxation (SOR) method~\cite{young1954iterative}. We shall apply AOR to \eqref{eq:I+N} with well-designed $r$ and $\omega$. 
	
As $\mathcal N = -\mathcal N^{\intercal}$, all diagonal entries of $\mathcal N$ are zero. Let $B = -{\rm lower}(\mathcal N)$ be the lower triangular part of $\mathcal N$. Then $\mathcal N = B^{\intercal} -B$. Let $B^{\sym} = B + B^{\intercal}$ be a symmetrization of $B$. We have the splitting
\begin{align}
	\label{eq:Bs-B} \mathcal N &= B^{\sym} - 2B,\quad \text{and}\\
	\label{eq:B-Bs} \mathcal N &= 2 B^{\intercal} - B^{\sym}.
\end{align}
As a symmetric matrix, all eigenvalues of $B^{\sym}$ are real. Let $\lambda_1 \leq \cdots \leq \lambda_n$ be the eigenvalues of $B^{\sym}$. Since ${\rm trace } (B^{\sym}) = 0$, we know $\lambda_1 <0$ and $\lambda_n >0$. That is $B^{\sym}$ is symmetric but indefinite. Define $L_{B^{\sym}} =  \| B^{\sym} \| = \max \{|\lambda_1|, \lambda_n\}$ and $\tilde{\kappa}( B^{\sym}) = L_{B^{\sym}} / \mu$. 

		
We discretize the gradient flow~\eqref{eq: gradient flow} by the matrix splitting~\eqref{eq:Bs-B} and treat $B^{\sym}$ as the explicit term:
\begin{equation}\label{eq: GS method y}
	\frac{x_{k+1} - x_k}{\alpha} = - \left (\mu x_{k+1}+B^{\sym}x_k - 2B x_{k+1} - b \right).
\end{equation}
The update of $x_{k+1}$ is equivalent to solve
\begin{equation}\label{eq:AOR}
	\left[ (1+\alpha \mu) I - 2\alpha B \right ]x_{k+1} = x_k - \alpha (B^{\sym}x_k - b),
\end{equation}
which is AOR for solving
\begin{align*}
   (\mu I  - B + B^{\intercal}) x =  b
\end{align*}
 with $r = \frac{2\alpha \mu}{1+\alpha \mu}$ and $\omega =  \frac{\alpha \mu}{1+\alpha \mu}$. The left hand side of~\eqref{eq:AOR} is an invertible lower-triangular matrix and $x_{k+1}$ can be efficiently computed by a forward substitution.
%
We can also use the splitting~\eqref{eq:B-Bs} to get another AOR scheme
\begin{equation}\label{eq: GS method 2}
	\frac{x_{k+1} - x_k}{\alpha} = - \left (\mu x_{k+1} - B^{\sym}x_k + 2B^{\intercal} x_{k+1} - b \right),
\end{equation}
for which $x_{k+1}$ can be efficiently computed by a backward substitution. 

Without loss of generality, we focus on the convergence analysis of scheme~\eqref{eq: GS method y}. Notice that when $0<\alpha < 1/L_{B^{\sym}}$, $I \pm \alpha B^{\sym}$ is SPD. 					
For $0<\alpha<1/L_{B^{\sym}}$, consider the Lyapunov function
\begin{equation}\label{eq: linear refined Lyapunov}
	\mathcal{E}^{\alpha B}(x) = \frac{1}{2}\|x-x^{\star}\|^2 -  \frac{\alpha}{2}\| x - x^{\star}\|^2_{B^{\sym}}= \frac{1}{2} \| x- x^{\star} \|^2_{ I - \alpha B^{\sym}} ,
\end{equation}
which is nonnegative and $\mathcal E^{\alpha B}(x)= 0$  if and only if $x = x^{\star}$. Noted that $\mathcal E^{\alpha B}$ might be negative without control on the step size $\alpha$ as $B^{\sym}$ is indefinite. 
					

\begin{theorem}\label{thm: linear convergence for the shifted skew-symmetric linear system}
	Let $\{x_k\}$ be the sequence generated by ~\eqref{eq: GS method y} with arbitrary initial guess $x_0$ and step size $0< \alpha < 1/L_{B^{\sym}}$. Then for the Lyapunov function~\eqref{eq: linear refined Lyapunov}, we have
	\begin{equation}\label{eq:linearAORrate}
		\begin{aligned}
			\mathcal{E}^{\alpha B}( x_{k+1}) \leq&~ \frac{1}{1+\alpha \mu}\mathcal{E}^{\alpha B}(x_{k}).
		\end{aligned}
	\end{equation}
In particular, for $ \alpha = \displaystyle \frac{1}{2L_{B^{\sym}}}$, we have
\begin{equation}\label{eq:AORnormrate}
\| x_k - x^{\star}\|^2 \leq \left (\frac{1}{1 + 1/\tilde{\kappa}(B^{\sym})}\right )^k 3\| x_0 - x^{\star}\|^2.
\end{equation}
\end{theorem}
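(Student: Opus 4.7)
The plan is to extend the identity-of-squares analysis from \eqref{eq:IEproof} to the weighted inner product $(\cdot,\cdot)_M$ induced by $M := I - \alpha B^{\rm sym}$. Since $\alpha < 1/L_{B^{\rm sym}}$ forces $\|\alpha B^{\rm sym}\| < 1$, both $M$ and $I + \alpha B^{\rm sym}$ are SPD, so $\mathcal E^{\alpha B}(x) = \tfrac{1}{2}\|x - x^{\star}\|_M^2$ is a bona fide squared $M$-norm. Set $e_k := x_k - x^{\star}$. Subtracting the fixed-point identity $\mu x^{\star} + B^{\rm sym} x^{\star} - 2B x^{\star} = b$ from \eqref{eq: GS method y}, the error iteration reads $(e_{k+1} - e_k)/\alpha = -\mu e_{k+1} - B^{\rm sym} e_k + 2 B e_{k+1}$. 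Applying \eqref{eq:squares} in $(\cdot,\cdot)_M$ then gives
\begin{equation*}
\mathcal E^{\alpha B}(x_{k+1}) - \mathcal E^{\alpha B}(x_k) = (e_{k+1} - e_k, e_{k+1})_M - \tfrac{1}{2}\|e_{k+1} - e_k\|_M^2,
\end{equation*}
reducing everything to simplifying the cross term.

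The main technical step, which I expect to be the principal obstacle, is the following cancellation. Writing $(e_{k+1} - e_k, e_{k+1})_M = (e_{k+1} - e_k, e_{k+1}) - \alpha(B^{\rm sym}(e_{k+1} - e_k), e_{k+1})$, I would insert the error iteration into the first piece and convert $2(B e_{k+1}, e_{k+1})$ into $(B^{\rm sym} e_{k+1}, e_{k+1})$ via skew-symmetry of $\mathcal N$. The two $\alpha(B^{\rm sym}\cdot, e_{k+1})$ contributions then match exactly, yielding the clean identity
\begin{equation*}
(e_{k+1} - e_k, e_{k+1})_M = -\alpha \mu \|e_{k+1}\|^2.
\end{equation*}
This cancellation is what the whole proof rests on; it depends crucially on the splitting $\mathcal N = B^{\rm sym} - 2B$ together with the explicit treatment of $B^{\rm sym}$ and implicit treatment of $-2B$ baked into \eqref{eq: GS method y}. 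Plugging back produces the descent identity
\begin{equation*}
\mathcal E^{\alpha B}(x_{k+1}) - \mathcal E^{\alpha B}(x_k) = -\alpha \mu \|e_{k+1}\|^2 - \tfrac{1}{2}\|e_{k+1} - e_k\|_M^2.
\end{equation*}

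To upgrade this descent to the strong contraction \eqref{eq:linearAORrate}, I would rearrange the target $(1+\alpha\mu)\mathcal E^{\alpha B}(x_{k+1}) \leq \mathcal E^{\alpha B}(x_k)$ as
\begin{equation*}
\alpha \mu \|e_{k+1}\|^2 - \tfrac{\alpha\mu}{2}\|e_{k+1}\|_{I - \alpha B^{\rm sym}}^2 + \tfrac{1}{2}\|e_{k+1} - e_k\|_M^2 \geq 0.
\end{equation*}
The first two terms collapse to $\tfrac{\alpha\mu}{2}\|e_{k+1}\|_{I + \alpha B^{\rm sym}}^2$, nonnegative by the positivity of $I + \alpha B^{\rm sym}$, and the last term is nonnegative by positivity of $M$, so \eqref{eq:linearAORrate} is immediate.

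Finally, for \eqref{eq:AORnormrate} at $\alpha = 1/(2L_{B^{\rm sym}})$, the spectral sandwich $\tfrac{1}{2}(1 - \alpha L_{B^{\rm sym}})\|\cdot\|^2 \leq \mathcal E^{\alpha B} \leq \tfrac{1}{2}(1 + \alpha L_{B^{\rm sym}})\|\cdot\|^2$ specializes to $\tfrac{1}{4}\|\cdot\|^2 \leq \mathcal E^{\alpha B} \leq \tfrac{3}{4}\|\cdot\|^2$; combining these endpoints with the iterated contraction yields the constant $3$ and the factor $(1 + \mu/(2L_{B^{\rm sym}}))^{-k}$.
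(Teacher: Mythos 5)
Your proposal is correct and takes essentially the same route as the paper: both arrive at the same descent identity $\mathcal E^{\alpha B}(x_{k+1}) - \mathcal E^{\alpha B}(x_k) = -\alpha\mu\|x_{k+1}-x^{\star}\|^2 - \tfrac12\|x_{k+1}-x_k\|^2_{I-\alpha B^{\rm sym}}$ via the identity of squares (the paper phrases the cancellation as writing the scheme as a correction of implicit Euler plus an $\alpha B^{\rm sym}$ cross term, you phrase it as working directly in the $M$-inner product, but the computation is the same), then use positivity of $I\pm\alpha B^{\rm sym}$ to absorb $-\alpha\mu\|x_{k+1}-x^{\star}\|^2$ into $-\alpha\mu\,\mathcal E^{\alpha B}(x_{k+1})$, and finish with the identical sandwich $\tfrac14\|x-x^{\star}\|^2 \le \mathcal E^{\alpha B}(x) \le \tfrac34\|x-x^{\star}\|^2$ at $\alpha = 1/(2L_{B^{\rm sym}})$.
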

\begin{proof}
%
We use the identity for squares~\eqref{eq:squares}:
\begin{equation}\label{eq:Eqdiff}
	\frac{1 }{2} \| x_{k+1} - x^{\star}\|^2-\frac{1 }{2} \| x_{k} - x^{\star}\|^2 =  (x_{k+1} -x^{\star}, x_{k+1} - x_k) -\frac{1 }{2} \| x_{k+1} - x_k\|^2. 
\end{equation}
Write GSS scheme~\eqref{eq: GS method y} as a correction of the implicit Euler scheme
\begin{align*}
	x_{k+1} - x_k & = - \alpha (\mathcal A ( x_{k+1}) -\mathcal A(x^{\star}))+ \alpha B^{\sym}(x_{k+1} - x_k).
\end{align*}
For the first term, we have
$$
-\alpha \langle  x_{k+1} -x^{\star},  \mathcal A ( x_{k+1}) -\mathcal A ( x^{\star})  \rangle = -  \alpha \mu \|x_{k+1} -x^{\star}\|^2. 
$$
We use the identity~\eqref{eq:squares} to expand the cross term as
\begin{align*}
(  x_{k+1} -x^{\star}, x_{k+1} - x_k)_{\alpha B^{\sym}} ={} &\frac{1}{2} \| x_{k+1} -x^{\star} \|_{\alpha B^{\sym}}^2 +  \frac{1}{2} \| x_{k+1} -x_k \|_{\alpha B^{\sym}}^2 \\
-&\frac{1}{2} \| x_{k} -x^{\star} \|_{\alpha B^{\sym}}^2.
\end{align*}
Substitute back to~\eqref{eq:Eqdiff} and rearrange the terms, we obtain the identity 
\begin{align*}
{}&	\mathcal E^{\alpha B}(x_{k+1}) - \mathcal E^{\alpha B}(x_{k}) \\
={}& -\alpha \mu \|x_{k+1} -x^{\star}\|^2-\frac{1}{2}\| x_{k+1}- x_k \|_{ I - \alpha B^{\sym}}^2\\
	={}& - \alpha  \mu \, \mathcal E^{\alpha B}(x_{k+1}) -\frac{\alpha \mu}{2}\| x_{k+1}- x^{\star} \|_{ I + \alpha B^{\sym}}^2-\frac{1}{2}\| x_{k+1}- x_k \|_{ I - \alpha B^{\sym}}^2.
\end{align*}
As $0< \alpha < \displaystyle 1/L_{B^{\sym}}$, $I \pm \alpha B^{\sym}$ is SPD and the last two terms are non-positive. Dropping them, we obtain the inequality
$$
\mathcal E^{\alpha B}(x_{k+1}) - \mathcal E^{\alpha B}(x_{k}) \leq - \alpha \mu\, \mathcal E^{\alpha B}(x_{k+1}) 
$$
and~\eqref{eq:linearAORrate} follows by arrangement.
						
When $\alpha = 1/(2L_{B^{\sym}})$, we have the bound	
\begin{equation}
\frac{1}{4}\|x-x^{\star}\|^2 \leq \mathcal E^{\alpha B}(x) \leq \frac{3}{4}\|x-x^{\star}\|^2
\end{equation}
which implies~\eqref{eq:AORnormrate}. 
\end{proof}

For SOR type iterative methods, usually spectral analysis~\cite{bai2007successive,bai2005generalized,hadjidimos2000successive} is applied to the error matrix which is in general non-symmetric and harder to estimate. The Lyapunov analysis provides a new and relatively simple approach and more importantly enables us to study nonlinear systems.


\subsection{Gradient and skew-Symmetric Splitting Methods for Nonlinear Problems}\label{sec: AOR for nonlinear problems}
For non-linear equation~\eqref{eq:Ax}, we treat $\nabla F$ explicitly and propose the following Gradient and skew-Symmetric Splitting (GSS) scheme
\begin{equation}\label{eq: nonlinear SOR 1}
    \frac{x_{k+1} - x_k}{\alpha} = -\left (\nabla F(x_k)+ B^{\sym}x_k - 2B x_{k+1}\right).
\end{equation}
Similarly, modification on~\eqref{eq: GS method 2} gives another GSS scheme
\begin{equation}\label{eq: nonlinear SOR 2}
    \frac{x_{k+1} - x_k}{\alpha} = - \left (\nabla F(x_k) - B^{\sym}x_k + 2B^{\intercal} x_{k+1} \right).
\end{equation}
Both schemes are explicit as $\nabla F(x_k)$ is known and $x_{k+1}$ can be computed by solving a triangular matrix equation.  

We focus on the GSS method~\eqref{eq: nonlinear SOR 1}. The proof for~\eqref{eq: nonlinear SOR 2} follows in line with a sign change in the following Lyapunov function
 \begin{equation}\label{eq: modified nolienar Lyapunov}
    \mathcal E^{\alpha BD}(x_k):= \frac{1}{2}\|x_k-x^{\star}\|^2_{I - \alpha B^{\sym}}  -\alpha D_F(x^{\star}, x_k).
\end{equation}
%

\begin{lemma}\label{lem:positivity of GSS Lyapunov}
For  $F \in \mathcal S_{\mu, L_F}$ and $\alpha  <1/\max\{2L_{B^{\sym}}, 2L_F \}$, the Lyapunov function $\mathcal E^{\alpha BD}(x)\geq 0$ and $\mathcal E^{\alpha BD}(x)= 0$  if and only if $x = x^{\star}$.
\end{lemma}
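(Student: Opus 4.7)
The plan is to bound the two pieces of $\mathcal E^{\alpha BD}$ separately against $\|x-x^\star\|^2$ and show that, under the step size condition, the combined coefficient is strictly positive (on $x\neq x^\star$), while $\mathcal E^{\alpha BD}(x^\star)=0$ follows by inspection.

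First I would estimate the quadratic term. Since $B^{\rm sym}$ is symmetric with $\|B^{\rm sym}\|=L_{B^{\rm sym}}$, the operator inequality $-L_{B^{\rm sym}}I\leq B^{\rm sym}\leq L_{B^{\rm sym}}I$ gives $I-\alpha B^{\rm sym}\geq (1-\alpha L_{B^{\rm sym}})I$, hence
\begin{equation*}
\tfrac{1}{2}\|x-x^\star\|^2_{I-\alpha B^{\rm sym}}\;\geq\;\tfrac{1}{2}(1-\alpha L_{B^{\rm sym}})\|x-x^\star\|^2.
\end{equation*}
Next I would use the assumption $F\in\mathcal S_{\mu,L_F}$, which yields the upper bound on the Bregman divergence $D_F(x^\star,x)\leq \tfrac{L_F}{2}\|x-x^\star\|^2$. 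Combining these gives
\begin{equation*}
\mathcal E^{\alpha BD}(x)\;\geq\;\tfrac{1}{2}\bigl(1-\alpha L_{B^{\rm sym}}-\alpha L_F\bigr)\|x-x^\star\|^2.
\end{equation*}

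Under the hypothesis $\alpha<1/\max\{2L_{B^{\rm sym}},2L_F\}$, both $\alpha L_{B^{\rm sym}}<1/2$ and $\alpha L_F<1/2$, so the coefficient $1-\alpha L_{B^{\rm sym}}-\alpha L_F$ is strictly positive. This immediately gives $\mathcal E^{\alpha BD}(x)\geq 0$, with $\mathcal E^{\alpha BD}(x)=0$ forcing $\|x-x^\star\|=0$, i.e., $x=x^\star$. Conversely, at $x=x^\star$ both $\|x-x^\star\|^2_{I-\alpha B^{\rm sym}}$ and $D_F(x^\star,x^\star)$ vanish, so $\mathcal E^{\alpha BD}(x^\star)=0$.

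There is no real obstacle here; the argument is a two-line sandwich estimate. The only point that deserves care is that $B^{\rm sym}$ is indefinite, so $\|\cdot\|_{I-\alpha B^{\rm sym}}$ is only a norm for $\alpha$ sufficiently small — which is precisely what the hypothesis $\alpha<1/(2L_{B^{\rm sym}})$ guarantees (in fact with room to spare, leaving half of the quadratic budget to absorb the $-\alpha D_F(x^\star,x)$ term).
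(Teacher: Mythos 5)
Your proof is correct and follows essentially the same route as the paper: bound $D_F(x^\star,x)$ by $\tfrac{L_F}{2}\|x-x^\star\|^2$ and use $\|B^{\rm sym}\|=L_{B^{\rm sym}}$ to absorb both terms into $\tfrac12\|x-x^\star\|^2$ under the step-size restriction. Packaging the result as a single strictly positive coefficient $\tfrac12(1-\alpha L_{B^{\rm sym}}-\alpha L_F)$ is a slightly cleaner way to get the ``$=0$ iff $x=x^\star$'' part, but it is the same sandwich estimate as in the paper.
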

\begin{proof}
Since   $F \in \mathcal S_{\mu, L}$, the Bregman divergence is non-negative and
\begin{equation*}
  D_F(x^{\star}, x) \leq \frac{L_F}{2}\|x-x^{ \star}\|^2.
\end{equation*}
Then for  $\alpha  <1/\max\{2L_{B^{\sym}}, 2L_F \}$,
\begin{equation*}
\begin{aligned}
     \mathcal{E}^{\alpha BD}(x) &\geq  \frac{1}{2}\|x-x^{\star}\|^2 - \frac{\alpha}{2} \|x-x^{\star}\|_{B^{\sym}} - \frac{\alpha L_F}{2}\|x-x^{ \star}\|^2 \\
    &\geq \frac{1}{2}\|x-x^{\star}\|^2   - \frac{1}{4L_{B^{\sym}}} \|x-x^{\star}\|_{B^{\sym}}^2  - \frac{ 1}{4}\|x-x^{ \star}\|^2  \geq 0.
\end{aligned}
\end{equation*}
If $x \neq x^{\star}$, then the second $\geq$ becomes $>$. So $\mathcal E^{\alpha BD}(x)= 0$  if and only if $x = x^{\star}$. 
\end{proof}

\begin{theorem}\label{thm: linear convergence for generalized nonlinear system}
Let $\{x_k\}$ be the sequence generated by the AOR method~\eqref{eq: nonlinear SOR 1} with arbitrary initial guess $x_0$ and step size $\alpha < 1/\max\{2L_{B^{\sym}}, 2L_F\}$. 
Then for the discrete Lyapunov function~\eqref{eq: modified nolienar Lyapunov}, we have 
\begin{equation*}
\begin{aligned}
\mathcal{E}^{\alpha BD}(x_{k+1})\leq&~ \frac{1}{1+ \alpha \mu}\mathcal{E}^{\alpha BD}(x_k).
\end{aligned}
\end{equation*}
In particular, for $ \alpha = \displaystyle 1/ (4\max \left\{ L_{B^{\sym}}, L_F \right\})$, we have
\begin{equation}\label{eq: nonlinear AOR norm rate}
\| x_k - x^{\star}\|^2 \leq \left (1 + 1/\max\left \{ 4\tilde{\kappa}( B^{\sym}), 4 \kappa(F)\right\}\right )^{-k} 6\| x_0 - x^{\star}\|^2.
\end{equation}
\end{theorem}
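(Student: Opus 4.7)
The plan is to follow the template of Theorem~\ref{thm: linear convergence for the shifted skew-symmetric linear system} and treat the nonlinear GSS scheme \eqref{eq: nonlinear SOR 1} as a perturbation of the implicit Euler scheme. Using $\mathcal A(x) = \nabla F(x) + (B^{\rm sym} - 2B)x$, a direct rearrangement of \eqref{eq: nonlinear SOR 1} gives
\[
x_{k+1} - x_k = -\alpha\,\mathcal A(x_{k+1}) + \alpha\bigl[\nabla F(x_{k+1}) - \nabla F(x_k)\bigr] + \alpha\,B^{\rm sym}(x_{k+1} - x_k),
\]
isolating the explicit mismatches in $\nabla F$ and $B^{\rm sym}$ that must be controlled.

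Next I would take the inner product with $x_{k+1} - x^{\star}$ and combine: (i) the identity of squares $(x_{k+1}-x^\star, x_{k+1}-x_k) = \tfrac12\|x_{k+1}-x^\star\|^2 - \tfrac12\|x_k-x^\star\|^2 + \tfrac12\|x_{k+1}-x_k\|^2$; (ii) strong monotonicity $\langle\mathcal A(x_{k+1}), x_{k+1}-x^\star\rangle \geq \mu\|x_{k+1}-x^\star\|^2$; (iii) the Bregman divergence identity \eqref{eq: Bregman divergence identity} with $(x,y,z) = (x_{k+1}, x_k, x^\star)$ together with symmetrized Bregman, which reduces the $\nabla F$ cross term to $D_F(x_{k+1}, x_k) + D_F(x^\star, x_{k+1}) - D_F(x^\star, x_k)$; and (iv) the identity of squares in the indefinite seminorm $\|\cdot\|_{B^{\rm sym}}$ on the cross term $\alpha(B^{\rm sym}(x_{k+1}-x_k), x_{k+1}-x^\star)$. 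Collecting the shifts into $\tfrac12\|\cdot - x^\star\|^2_{I-\alpha B^{\rm sym}} - \alpha D_F(x^\star, \cdot) = \mathcal E^{\alpha BD}(\cdot)$ yields the one-step inequality
\[
\mathcal E^{\alpha BD}(x_{k+1}) - \mathcal E^{\alpha BD}(x_k) \leq -\alpha\mu\|x_{k+1}-x^\star\|^2 + \alpha D_F(x_{k+1}, x_k) - \tfrac12\|x_{k+1}-x_k\|^2_{I-\alpha B^{\rm sym}}.
\]

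The main obstacle is then to verify $\mathcal E^{\alpha BD}(x_{k+1}) - \mathcal E^{\alpha BD}(x_k) + \alpha\mu\,\mathcal E^{\alpha BD}(x_{k+1}) \leq 0$. Expanding $\alpha\mu\,\mathcal E^{\alpha BD}(x_{k+1})$ and grouping, the residual splits into two parts. The local-motion part $\alpha D_F(x_{k+1},x_k) - \tfrac12\|x_{k+1}-x_k\|^2_{I-\alpha B^{\rm sym}}$ is non-positive because $D_F(x_{k+1},x_k) \leq \tfrac{L_F}{2}\|x_{k+1}-x_k\|^2$ and $I - \alpha B^{\rm sym} \succeq (1-\alpha L_{B^{\rm sym}})I$, and the restriction $\alpha < 1/\max\{2L_{B^{\rm sym}}, 2L_F\}$ keeps the combined coefficient negative. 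The distance part $-\tfrac{\alpha\mu}{2}\|x_{k+1}-x^\star\|^2_{I+\alpha B^{\rm sym}} - \alpha^2\mu\,D_F(x^\star, x_{k+1})$ is non-positive because $I+\alpha B^{\rm sym}$ is SPD for this range of $\alpha$ and $D_F \geq 0$. Together this yields $\mathcal E^{\alpha BD}(x_{k+1}) \leq (1+\alpha\mu)^{-1}\mathcal E^{\alpha BD}(x_k)$.

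Finally, to convert the Lyapunov rate into the norm bound, I would use the two-sided equivalence between $\mathcal E^{\alpha BD}$ and $\tfrac12\|\cdot - x^\star\|^2$. The preceding lemma already shows $\mathcal E^{\alpha BD}(x) \geq \tfrac14\|x-x^\star\|^2$ at $\alpha = \min\{1/(4L_{B^{\rm sym}}), 1/(4L_F)\}$; conversely, dropping the non-positive $-\alpha D_F(x^\star,x)$ and bounding $I-\alpha B^{\rm sym} \preceq (1+\alpha L_{B^{\rm sym}})I \preceq \tfrac54 I$ gives $\mathcal E^{\alpha BD}(x_0) \leq \tfrac58\|x_0-x^\star\|^2$. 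With this choice of step size, $\alpha\mu = 1/\max\{4\kappa(B^{\rm sym}), 4\kappa(F)\}$, and chaining these equivalence constants into the Lyapunov rate produces \eqref{eq: nonlinear AOR norm rate} (the constant $6$ is a convenient loose upper bound).
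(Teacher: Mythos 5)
Your proposal is correct and follows essentially the same route as the paper's proof: rewriting the GSS step as a perturbed implicit Euler step, expanding the $\nabla F$ mismatch with the Bregman three-term identity and the $B^{\rm sym}$ mismatch with the identity of squares, absorbing the shifts into $\mathcal E^{\alpha BD}$, and controlling the residual terms under $\alpha < 1/\max\{2L_{B^{\rm sym}}, 2L_F\}$, followed by the two-sided equivalence of $\mathcal E^{\alpha BD}$ with $\tfrac12\|\cdot-x^\star\|^2$ at the specific step size. The only cosmetic caveat is that the preceding lemma as stated only gives $\mathcal E^{\alpha BD}\geq 0$; the lower bound $\tfrac14\|x-x^\star\|^2$ (the paper uses $\tfrac18$) follows by rerunning its one-line estimate with $\alpha = \min\{1/(4L_{B^{\rm sym}}), 1/(4L_F)\}$, exactly as you indicate, and your constants $\tfrac14$, $\tfrac58$ indeed imply the stated bound with constant $6$.
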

\begin{proof}
We write the scheme \eqref{eq: nonlinear SOR 1} as a correction of the implicit Euler scheme
\begin{align*}
x_{k+1} - x_k  = &- \alpha (\mathcal A ( x_{k+1}) -\mathcal A(x^{\star}))+ \alpha B^{\sym}(x_{k+1} - x_k) \\
&+ \alpha(\nabla F(x_{k+1}) - \nabla F(x_k)).
\end{align*}
The first two terms are treated as before; see the proof of Theorem \ref{thm: linear convergence for the shifted skew-symmetric linear system}.
The last cross term is expanded using the identity~\eqref{eq: Bregman divergence identity} for the Bregman divergence:
\begin{equation*}
\begin{aligned}
   \langle  x_{k+1} - x^{\star}, \nabla F(x_{x+1}) - \nabla F(x_k) \rangle  = D_F(x^{\star}, x_{k+1}) + D_F( x_{k+1}, x_k) - D_F( x^{\star}, x_k).
\end{aligned}
\end{equation*}
Substituting back to~\eqref{eq:Eqdiff} and rearranging the terms, we obtain the identity
\begin{align*}
\mathcal E^{\alpha BD}(x_{k+1}) - \mathcal E^{\alpha BD}(x_{k}) =&~ - \alpha \mu \mathcal E^{\alpha BD}(x_{k+1}) - \frac{\alpha \mu}{2} \|x_{k+1} -x^{\star}\|_{I + \alpha B^{\sym}}^2\\
&- \alpha^2 \mu D_F(x^{\star}, x_{k+1})\\
& - \frac{1}{2}\| x_{k+1}- x_k \|_{I -\alpha B^{\sym}}^2+ \alpha D_F( x_{k+1}, x_k).
\end{align*}
When $\alpha  < 1/\max\{2L_{B^{\sym}}, 2L_F \}$,  $I \pm\alpha B^{\sym}$ is SPD and
\begin{equation*}
\begin{aligned}
  \alpha D_F( x_{k+1},  x_k)&\leq \frac{\alpha L}{2}\|x_{k+1}- x_k\|^2 \leq  \frac{1}{4}\|x_{k+1}- x_k\|^2\leq \frac{1}{2}\| x_{k+1}- x_k \|_{I -\alpha B^{\sym}}^2.
\end{aligned}
\end{equation*} 
Thus the last two terms are non-positive. Dropping all non-positive terms, we obtain the inequality
$$
\mathcal E^{\alpha BD}(x_{k+1}) - \mathcal E^{\alpha BD}(x_{k}) \leq - \alpha \mu \mathcal E^{\alpha BD}(x_{k+1}) 
$$
and the linear reduction follows by arrangement.
						
When $\alpha = \displaystyle \min \left\{\frac{1}{4L_{B^{\sym}}}, \frac{1}{4L} \right\}$, we have the bound	
\begin{equation}
\frac{1}{8}\|x-x^{\star}\|^2 \leq \mathcal E^{\alpha BD}(x) \leq \frac{3}{4}\|x-x^{\star}\|^2
\end{equation}
which implies~\eqref{eq: nonlinear AOR norm rate}.
\end{proof}

Notice the rate in Theorem \ref{thm: linear convergence for generalized nonlinear system} is $(1+c/\kappa)$ for $\kappa =  \max\{\kappa(F),\tilde{\kappa}(B^{\sym})\}$ which matches the rate of the gradient descent method for convex optimization problems. We expect a combination of the accelerated gradient flow and AOR for the skew-symmetric part will give an accelerated explicit scheme. 

\subsection{Implicit-Explicit Euler Schemes}
%
%
If we treated the skew-symmetric part implicitly, we have the implicit-explicit scheme: 
\begin{equation}\label{eq: semi-implicit Euler}
    x_{k+1} = x_k - \alpha_k \nabla F(x_k) - \alpha_k \mathcal N(x_{k+1}).
\end{equation}
Define the Lyapunov function 
\begin{equation}\label{eq: semi-implicit Lyapunov}
    \mathcal E^{\alpha D}(x_k) = \frac{1}{2}\|x_k - x^{\star}\| - \alpha D_F( x^\star, x_k).
\end{equation}
Following Lemma \ref{lem:positivity of GSS Lyapunov}, $\mathcal E^{\alpha D} \geq 0$ if $\alpha < 1/L_F$ and $\mathcal E^{\alpha D} =0$ iff $x_k = x^{\star}$.

Combining the proof of Theorem \ref{thm: linear convergence for the shifted skew-symmetric linear system} and Theorem \ref{thm: linear convergence for generalized nonlinear system}, we obtain the following linear convergence.
\begin{theorem}\label{thm:IMEX}
Let $\{x_k\}$ be the sequence generated by the scheme~\eqref{eq: semi-implicit Euler} with arbitrary initial guess $x_0$ and step size $\alpha < 1/L_F$. 
Then for the discrete Lyapunov function~\eqref{eq: semi-implicit Lyapunov}, we have 
\begin{equation*}
\begin{aligned}
\mathcal{E}^{\alpha D}(x_{k+1})\leq&~ \frac{1}{1+ \alpha \mu}\mathcal{E}^{\alpha D}(x_k).
\end{aligned}
\end{equation*}
In particular, for $ \alpha = \displaystyle \frac{1}{2L_{F}}$, we have
\begin{equation}\label{eq: semi-implicit norm rate}
\| x_k - x^{\star}\|^2 \leq \left (1 + 1/ (2 \kappa(F)\right )^{-k} 2\| x_0 - x^{\star}\|^2.
\end{equation}
\end{theorem}

With the explicit part as the gradient of a convex function, we can accommodate acceleration techniques~\cite{luoDifferentialEquationSolvers2021a} for convex optimization to achieve accelerated rates; see the proposed AGSS scheme in Section \ref{sec: IMEX scheme}.

%



\section{Accelerated Gradient and skew-Symmetric Splitting Methods}\label{sec:AGSS flow and schemes}
In this section, we shall develop the accelerated gradient flow and propose Accelerated Gradient and Skew-symmetric Splitting (AGSS) methods with accelerated linear rates. For the implicit-explicit scheme, we can relax to inexact inner solvers with computable error tolerance. 

\subsection{Accelerated Gradient Flow}
We introduce an auxiliary variable $y$ and an accelerated gradient flow
\begin{equation}\label{eq:AG}
\left \{\begin{aligned}
     x^{\prime} &= y - x ,\\
     y^{\prime} & = x - y -\mu^{-1}(\nabla F(x) + \mathcal N y).
\end{aligned}\right .
\end{equation}
Comparing with the accelerated flow in ~\cite{luoDifferentialEquationSolvers2021a} for convex optimization, the difference is to split $\mathcal A(x) \rightarrow \nabla F(x) + \mathcal N y$. Denote the vector field on the right hand side of~\eqref{eq:AG} by $\mathcal G(x,y)$. Then $\mathcal G(x^{\star},x^{\star}) = 0$ and thus $(x^{\star},x^{\star})$ is an equilibrium point of~\eqref{eq:AG}. 
 
We first show $(x^{\star},x^{\star})$ is exponentially stable. Consider the Lyapunov function:
\begin{equation}\label{eq: acc Lyapunov, mixed}
\mathcal{E}(x, y) =  D_F(x, x^{\star}) + \frac{\mu}{2}\| y-x^{\star}\|^2.
\end{equation}
For $\mu$-strongly convex $F$, function $D_F(\cdot , x^{\star}) \in \mathcal S_{\mu}$. Then $\mathcal{E}(x, y)\geq 0$ and $\mathcal{E}(x, y)= 0$ iff $x=y=x^{\star}$.

\begin{lemma}
For function $F \in \mathcal S_{\mu}$, we have
\begin{equation}\label{eq:DFmu}
 \langle \nabla F(x) - \nabla F(x^{\star}), x- x^{\star} \rangle \geq  D_F(x, x^{\star}) + \frac{\mu}{2}\|x - x^{\star}\|^2,
\end{equation}
\end{lemma}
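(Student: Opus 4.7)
The plan is to derive the inequality directly from the symmetrized Bregman divergence identity recorded just before Lemma 1 of Section 2, combined with the definition of $\mu$-strong convexity.

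First I would expand the inner product using the symmetrization identity
\[
\langle \nabla F(x) - \nabla F(x^{\star}), x - x^{\star}\rangle = D_F(x, x^{\star}) + D_F(x^{\star}, x),
\]
which follows immediately from the definition $D_F(u,v) = F(u) - F(v) - \langle \nabla F(v), u - v\rangle$ by adding the two Bregman divergences and canceling the $F$-values.

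Next I would invoke the hypothesis $F \in \mathcal S_{\mu}$, which by definition yields
\[
D_F(x^{\star}, x) \geq \frac{\mu}{2}\|x^{\star} - x\|^2 = \frac{\mu}{2}\|x - x^{\star}\|^2.
\]
Substituting this lower bound for the second Bregman term while keeping the first term intact gives exactly the claimed inequality~\eqref{eq:DFmu}.

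There is no real obstacle here: the statement is essentially a bookkeeping consequence of the symmetrization identity together with strong convexity applied to one of the two Bregman terms, and the smoothness of $F$ plays no role. The only thing to be careful about is that $\mu$-strong convexity is being used with arguments $(x^{\star}, x)$ rather than $(x, x^{\star})$, but the definition in the preliminaries is symmetric in this respect since $\|x - x^{\star}\|^2 = \|x^{\star} - x\|^2$.
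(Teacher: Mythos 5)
Your proof is correct and follows essentially the same route as the paper: both expand the inner product via the symmetrized Bregman identity $\langle \nabla F(x)-\nabla F(x^{\star}),x-x^{\star}\rangle = D_F(x,x^{\star})+D_F(x^{\star},x)$ and then lower-bound the second term by $\frac{\mu}{2}\|x-x^{\star}\|^2$ using the definition of $\mathcal S_{\mu}$. Your remark that strong convexity is applied with arguments $(x^{\star},x)$ is exactly the point the paper handles via the bound $\min\{D_F(x,x^{\star}),D_F(x^{\star},x)\}\geq \frac{\mu}{2}\|x-x^{\star}\|^2$, so there is no discrepancy.
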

\begin{proof}
By direct computation, $ \langle \nabla F(x) - \nabla F(x^{\star}), x- x^{\star} \rangle = D_F(x,x^{\star}) + D_F(x^{\star}, x)$ and thus~\eqref{eq:DFmu} follows from the bound
$$\min \{ D_F(x,x^{\star}), D_F(x^{\star},x)\} \geq \frac{\mu}{2}\|x - x^{\star}\|^2,$$
for a convex function $F \in \mathcal S_{\mu}$.
\end{proof}

We then verify the strong Lyapunov property.

\begin{lemma}[Strong Lyapunov Property]\label{lem: acc strong Lyapunov}
Assume function $F \in \mathcal{S}_{\mu} $. Then for the Lyapunov function~\eqref{eq: acc Lyapunov, mixed} and the  accelerated gradient flow vector field $\mathcal G$, the following strong Lyapunov property holds
\begin{equation}\label{eq:strong acc}
- \nabla \mathcal E(x,y)\cdot \mathcal G(x,y) \geq \mathcal E(x,y)+\frac{\mu}{2}\|y-x\|^2.
\end{equation}
\end{lemma}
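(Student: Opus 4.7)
The plan is to directly compute $-\nabla \mathcal E(x,y)\cdot \mathcal G(x,y)$ and manipulate it into $\mathcal E(x,y) + \frac{\mu}{2}\|y-x\|^2$ plus nonnegative leftovers. First I would write out the gradient of $\mathcal E$ as given in~\eqref{eq: acc Lyapunov, mixed}: $\nabla_x \mathcal E = \nabla F(x) - \nabla F(x^\star)$ and $\nabla_y \mathcal E = \mu(y - x^\star)$. Pairing with $\mathcal G$ from~\eqref{eq:AG} produces three kinds of terms: an inner product $\langle \nabla F(x) - \nabla F(x^\star), y-x\rangle$ coming from $\nabla_x \mathcal E \cdot (y-x)$, the bilinear piece $\mu\langle y-x^\star, x-y\rangle$, and the coupling piece $-\langle y-x^\star, \nabla F(x) + \mathcal N y\rangle$.

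The key move is to use that $x^\star$ solves $\mathcal A(x^\star) = 0$, which lets me rewrite
\[
\nabla F(x) + \mathcal N y = \bigl(\nabla F(x) - \nabla F(x^\star)\bigr) + \mathcal N(y - x^\star).
\]
Then skew-symmetry of $\mathcal N$ kills the purely skew piece, $\langle y-x^\star, \mathcal N(y-x^\star)\rangle = 0$. This is the crucial cancellation that makes the accelerated flow work for monotone (not merely gradient) operators. What remains of the coupling term is $-\langle y - x^\star, \nabla F(x) - \nabla F(x^\star)\rangle$, which combines with $\langle \nabla F(x) - \nabla F(x^\star), y - x \rangle$ to give $-\langle \nabla F(x) - \nabla F(x^\star), x - x^\star\rangle$, a clean symmetrized Bregman term.

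Next I would handle the cross term $\mu\langle y - x^\star, x - y\rangle$ by the polarization identity $2(a,b) = \|a+b\|^2 - \|a\|^2 - \|b\|^2$ with $a = y - x^\star$, $b = x - y$ (so $a+b = x - x^\star$), producing
\[
\mu\langle y - x^\star, x - y\rangle = \tfrac{\mu}{2}\|x-x^\star\|^2 - \tfrac{\mu}{2}\|x-y\|^2 - \tfrac{\mu}{2}\|y-x^\star\|^2.
\]
Collecting everything,
\[
-\nabla \mathcal E \cdot \mathcal G = \langle \nabla F(x) - \nabla F(x^\star), x-x^\star\rangle - \tfrac{\mu}{2}\|x-x^\star\|^2 + \tfrac{\mu}{2}\|x-y\|^2 + \tfrac{\mu}{2}\|y-x^\star\|^2.
\]

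Finally I would invoke the strong convexity bound~\eqref{eq:DFmu} to absorb $\langle \nabla F(x) - \nabla F(x^\star), x-x^\star\rangle \geq D_F(x, x^\star) + \tfrac{\mu}{2}\|x-x^\star\|^2$, which exactly cancels the negative $-\tfrac{\mu}{2}\|x-x^\star\|^2$ and leaves $D_F(x,x^\star) + \tfrac{\mu}{2}\|y-x^\star\|^2 + \tfrac{\mu}{2}\|x-y\|^2 = \mathcal E(x,y) + \tfrac{\mu}{2}\|x-y\|^2$. There is no real obstacle here; the calculation is short once one sees that skew-symmetry of $\mathcal N$ provides the annihilation so that the only contribution to dissipation comes from the symmetric/gradient part $\nabla F$, and strong convexity of $F$ then delivers the desired lower bound. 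The delicate bookkeeping step, which I would write out carefully, is recognizing that the two terms involving $\nabla F(x) - \nabla F(x^\star)$ combine with opposite signs to yield the $x-x^\star$ Bregman expression rather than the $y-x$ one.
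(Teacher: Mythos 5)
Your proposal is correct and follows essentially the same route as the paper's proof: expand $-\nabla\mathcal E\cdot\mathcal G$, use $\mathcal A(x^{\star})=0$ (equivalently $\mathcal G(x^{\star},x^{\star})=0$) together with skew-symmetry of $\mathcal N$ to reduce the coupling to the symmetrized Bregman term $\langle \nabla F(x)-\nabla F(x^{\star}),x-x^{\star}\rangle$, then apply the identity for squares~\eqref{eq:squares} and the strong convexity bound~\eqref{eq:DFmu}. The only difference is cosmetic bookkeeping of the cross term, so no further comment is needed.
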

\begin{proof}
First of all, as $\mathcal G(x^{\star}, x^{\star}) = 0$, $$- \nabla \mathcal E(x,y)\cdot \mathcal G(x,y) = - \nabla \mathcal E(x,y)\cdot (\mathcal G(x,y)  - \mathcal G(x^{\star}, x^{\star})).$$
Direct computation gives
\begin{align*}
- \nabla \mathcal E(x,y)\cdot \mathcal G(x,y) =&~
\langle  \nabla D_F(x; x^{\star}),x -x^{\star}-( y - x^{\star}) \rangle  -\mu ( y-x^{\star}, x -x^{\star})\\
 + \mu \|y  - x^{\star}\|^2 & + \langle \nabla F(x) - \nabla F(x^{\star}),y - x^{\star} \rangle + ( y-x^{\star}, \mathcal N (y-x^{\star}))\\
=~ \langle  \nabla F(x) &- \nabla F(x^{\star}),x -x^{\star} \rangle  + \mu \|y  - x^{\star}\|^2-\mu(y-x^{\star},x -x^{\star})
\end{align*}
where we have used $ \nabla D_F(x; x^{\star}) = \nabla F(x) - \nabla F(x^{\star})$ and $( y-x^{\star}, \mathcal N (y-x^{\star})) = 0$ since $\mathcal N$ is skew-symmetric.

Using the identity for squares~\eqref{eq:squares}, we expand
\begin{equation*}
    \begin{aligned}
    &\frac{1}{2} \|y  - x^{\star}\|^2 - ( y-x^{\star}, x -x^{\star})  = \frac{1}{2}\|y-x\|^2-\frac{1}{2}\|x - x^{\star}\|^2.
    \end{aligned}
\end{equation*}
Under the bound~\eqref{eq:DFmu},  we obtain~\eqref{eq:strong acc}.
\end{proof}
%

The calculation is more clear when $\nabla F(x) = Ax$ is linear with $A = \nabla^2 F \geq \mu I$. Then $- \nabla \mathcal E(x,y)\cdot (\mathcal G(x,y) - \mathcal G(x^{\star},y^{\star}))$ is a quadratic form $v^{\intercal}Mv$ with $v= (x - x^{\star}, y -x^{\star})^{\intercal}$. We calculate the matrix $M$ as
$$
\begin{pmatrix}
A & 0 \\
0  & \mu I
\end{pmatrix}
\begin{pmatrix}
  I & - I \\
 - I + \mu^{-1} A &   \quad I+ \mu^{-1} \mathcal N
\end{pmatrix}
=
\begin{pmatrix}
A & - A \\
 - \mu I + A & \;  \mu I + \mathcal N
\end{pmatrix}.
$$
As $v^{\intercal}Mv = v^{\intercal}\sym(M)v$, we compute the symmetric part
\begin{align*}
\sym
\begin{pmatrix}
A & -A\\
 -\mu I +A & \;   \mu I + \mathcal N
\end{pmatrix}
&=
\begin{pmatrix}
A & - \mu I/2 \\
 - \mu I/2 & \mu I
\end{pmatrix}\\
&\geq
\begin{pmatrix}
A/2 & 0 \\
0 & \;  \mu I/2
\end{pmatrix} + \frac{1}{2}
\begin{pmatrix}
\mu I & - \mu I \\
 -\mu I & \;  \mu I
\end{pmatrix},
\end{align*}
where in the last step we use the convexity $A\geq \mu I$. Then~\eqref{eq:strong acc} follows.

To see how the condition number changes using the accelerated gradient flow, we consider the following $2\times 2$ matrix
\begin{equation}
G =
\begin{pmatrix}
-1 & 1\\
1- a & \;  -1 + b{\rm i}
\end{pmatrix}
\end{equation}
with $a\geq 1$ representing the eigenvalue of $\nabla^2F/\mu$ and $b{\rm i}$ for $\mathcal N/\mu$ as the eigenvalue of skew-symmetric matrix is pure imaginary. Then the eigenvalue of $G$ is
\begin{equation*}
    \lambda (G)  = -1  + \frac{b \pm \sqrt{b^2+4(a-1)}}{2}{\rm i}.
\end{equation*}
The real part is always $-1$ which implies the decay property of ODE $x'=Gx$. The spectral radius is 
$$|\lambda|= \mathcal  O(\sqrt{a}) + \mathcal O(|b|),  \quad \text{as } a\gg 1, |b|\gg 1. $$
As a comparison, $| a + b {\rm i} | =\sqrt{a^2 + b^2} = \mathcal O(a) + \mathcal  O(|b|)$. We improve the dependence from $\mathcal  O(a)$ to $\mathcal O(\sqrt{a})$.

\subsection{Implicit Euler Schemes}
If we apply the implicit Euler method for the accelerated gradient system~\eqref{eq:AG}, the linear convergence is a direct consequence of the strong Lyapunov property~\eqref{eq:strong acc}. Consider
\begin{subequations}
\begin{align}
\label{eq:AGIE1}     \frac{x_{k+1} - x_k}{\alpha_k} &= \mathcal G^x(x_{k+1}, y_{k+1}):=y_{k+1} - x_{k+1},\\
\label{eq:AGIE2}      \frac{y_{k+1} - y_k}{\alpha_k} & = \mathcal G^y(x_{k+1}, y_{k+1}) :=x_{k+1} - y_{k+1} - \mu^{-1}(\nabla F(x_{k+1}) + \mathcal N y_{k+1}).
\end{align}
\end{subequations}
As we have shown before, the implicit Euler method is unconditionally stable with the price of solving a nonlinear equation system where $x_{k+1}$ and $y_{k+1}$ are coupled together, which is usually not practical. Again we present the convergence analysis here as a reference.

Denote $\mathcal E_{k} = \mathcal E(x_{k}, y_{k})$ and $z
_k = (x_k , y_k)$. Using the convexity of the Lyapunov function and under the same assumption of Lemma \ref{lem: acc strong Lyapunov}, we have
\begin{equation*}
    	\begin{aligned}
\mathcal E_{k+1} - \mathcal E_{k}\leq{}& (\nabla \mathcal E_{k+1},z_{k+1}-z_k) -\frac{\mu}{2}\|z_{k+1}-z_k\|^2\\
		={}& \alpha_k \dual{ \mathcal \nabla \mathcal E_{k+1}, \mathcal G(z_{k+1})} - \frac{\mu}{2}\|z_{k+1}-z_k\|^2\\
		\leq {}&-\alpha_k \mathcal{E}_{k+1} ,
	\end{aligned}
\end{equation*}
from which the linear convergence follows naturally
\begin{equation*}
    	\mathcal E_{k+1}\leq \frac{1}{1+\alpha_k }\mathcal E_{k}
\end{equation*}
for arbitrary step size $\alpha_k>0$.

\subsection{Implicit-Explicit Scheme}\label{sec: IMEX scheme}
If we treat the skew symmetric part implicit, we can achieve the acceleration like the convex optimization problem. Consider the following implicit-explicit (IMEX) scheme of the accelerated gradient flow:
\begin{subequations}\label{eq:semi-implicit}
\begin{align}
\label{eq:imex1}    \frac{\hat{x}_{k+1}-x_k}{\alpha_k} &=  y_k - \hat{x}_{k+1},  \\
\label{eq:imex2}    \frac{y_{k+1}-y_k}{\alpha_k} &=  \hat{x}_{k+1} - y_{k+1} -\mu^{-1} \left( \nabla F(\hat{x}_{k+1}) + \mathcal Ny_{k+1}\right) , \\
\label{eq:imex3}    \frac{x_{k+1}-x_k}{\alpha_k} &= y_{k+1} - x_{k+1}.
\end{align}
\end{subequations}
 We first treat $y$ known as $y_k$ and solve the first equation to get $\hat{x}_{k+1}$ and then with known $\hat{x}_{k+1}$ to solve the following linear equation
\begin{equation}\label{eq:I+N 2}
\left [ (1+\alpha_k)I + \alpha_k\mu^{-1} \mathcal N \right ] y_{k+1} = b(\hat{x}_{k+1},y_k),
\end{equation}
with known right hand side $b(\hat{x}_{k+1},y_k) = \alpha_k \hat x_{k+1} - \alpha_k\mu^{-1} \nabla F(\hat x_{k+1}) +  y_k $. The linear equation is associated to a shifted skew-symmetric system $(\beta  I + \mathcal N)$ with $\beta = 1 +\mu/\alpha_k$, which can be solved as discussion in Section \ref{sec:AORlinear}. 
Then with computed $y_{k+1}$, we can solve for $x_{k+1}$ again using an implicit discretization of $x^{\prime} = y - x$. In terms of the ODE solvers,~\eqref{eq:semi-implicit} is known as the predictor-corrector method. The intermediate approximation $\hat{x}_{k+1}$ is a predictor and $x_{k+1}$ is a corrector.

As the skew-symmetric part is treated implicitly, the scheme is expected to achieve the accelerated linear rate as the accelerated gradient method for minimizing the convex function $F$.
For simplicity, we denote 
\begin{equation}\label{eq:hatE}
\mathcal{E}_{k} = \mathcal{E}(x_k, y_k),\quad \hat{\mathcal{E}}_{k+1} = \mathcal{E}(\hat x_{k+1}, y_{k+1})
\end{equation}
for the Lyapunov function $\mathcal{E}$ defined in~\eqref{eq: acc Lyapunov, mixed}.

\begin{lemma}\label{lem: acc gra method semi-implicit decay}
Assume function $F \in \mathcal{S}_{\mu}$. Let $(\hat x_k, y_k)$ be the sequence generated by the accelerated gradient method~\eqref{eq:semi-implicit}. Then for $k \geq 0$,
\begin{equation}\label{eq:hatdecay}
\begin{aligned}
\hat{\mathcal{E}}_{k+1} - \mathcal{E}_k \leq&~ -\alpha_k \hat{\mathcal{E}}_{k+1} -\alpha_k  \left\langle \nabla D_F(\hat x_{k+1}, x^{\star}) , y_{k+1}-y_k\right \rangle   -\frac{\mu}{2}\left\|y_{k+1}-y_k\right\|^2 \\
&
 -\frac{\alpha_k \mu}{2}\|y_{k+1}-\hat x_{k+1}\|^2-D_F(\hat x_{k+1}, x_k) . 
\end{aligned}
\end{equation}
\end{lemma}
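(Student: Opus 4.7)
\medskip

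\noindent\textbf{Proof proposal.} The plan is to mimic the continuous-time strong Lyapunov calculation from Lemma~\ref{lem: acc strong Lyapunov}, but using the discrete identity-of-squares expansion together with the $\mu$-convexity of $D_F(\cdot, x^{\star})$ to generate the two quadratic penalty terms on the right-hand side of \eqref{eq:hatdecay}. The IMEX scheme \eqref{eq:semi-implicit} has a backward-Euler flavor: $\hat x_{k+1} - x_k$ and $y_{k+1} - y_k$ are both of the form $\alpha_k$ times a vector field evaluated at the \emph{new} iterate, which fits the implicit-style energy estimate already used for \eqref{eq:implicit} in \eqref{eq:IEproof}.

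First, I would split $\hat{\mathcal{E}}_{k+1} - \mathcal{E}_k$ into its Bregman part and its quadratic part. For the quadratic part, the identity \eqref{eq:squares} with $M = I$ gives
\[
\tfrac{\mu}{2}\|y_{k+1}-x^{\star}\|^{2} - \tfrac{\mu}{2}\|y_{k}-x^{\star}\|^{2}
= \mu\langle y_{k+1}-x^{\star},\, y_{k+1}-y_{k}\rangle - \tfrac{\mu}{2}\|y_{k+1}-y_{k}\|^{2},
\]
which already produces the $-\tfrac{\mu}{2}\|y_{k+1}-y_{k}\|^{2}$ term. For the Bregman part, since $D_F(\cdot,x^{\star}) \in \mathcal{S}_{\mu}$ by $F\in\mathcal{S}_\mu$, $\mu$-convexity at $\hat x_{k+1}$ against $x_k$ yields
\[
D_F(\hat x_{k+1},x^{\star}) - D_F(x_k,x^{\star})
\leq \langle \nabla D_F(\hat x_{k+1},x^{\star}),\, \hat x_{k+1}-x_k\rangle - \tfrac{\mu}{2}\|\hat x_{k+1}-x_k\|^{2},
\]
producing the $-\tfrac{\mu}{2}\|\hat x_{k+1}-x_k\|^{2}$ term. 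So the two ``free'' quadratic terms on the right-hand side of \eqref{eq:hatdecay} appear for free.

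Next I substitute the scheme into the remaining inner products. Using \eqref{eq:imex1}, $\hat x_{k+1}-x_k = \alpha_k(y_k-\hat x_{k+1})$, so the Bregman inner product becomes $\alpha_k\langle \nabla D_F(\hat x_{k+1},x^{\star}),\, y_k - \hat x_{k+1}\rangle$; writing $y_k-\hat x_{k+1} = -(y_{k+1}-y_k) + (y_{k+1}-\hat x_{k+1})$ peels off the desired cross term $-\alpha_k\langle \nabla D_F(\hat x_{k+1},x^{\star}), y_{k+1}-y_k\rangle$ plus a leftover $\alpha_k\langle \nabla D_F(\hat x_{k+1},x^{\star}), y_{k+1}-\hat x_{k+1}\rangle$ to be absorbed later. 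Using \eqref{eq:imex2} in the cross term $\mu\langle y_{k+1}-x^{\star}, y_{k+1}-y_k\rangle$ substitutes $y_{k+1}-y_k$ by $\alpha_k(\hat x_{k+1}-y_{k+1}) - \tfrac{\alpha_k}{\mu}(\nabla F(\hat x_{k+1}) + \mathcal{N} y_{k+1})$.

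The crucial step, and the step I expect to be the main obstacle (purely in bookkeeping), is the skew-symmetric cancellation. Using $\nabla F(x^{\star}) = -\mathcal{N}x^{\star}$ from $\mathcal{A}(x^{\star})=0$, I rewrite
\[
\nabla F(\hat x_{k+1}) + \mathcal{N}y_{k+1}
= \nabla D_F(\hat x_{k+1},x^{\star}) + \mathcal{N}(y_{k+1}-x^{\star}),
\]
and then $\langle y_{k+1}-x^{\star}, \mathcal{N}(y_{k+1}-x^{\star})\rangle = 0$ because $\mathcal{N}^{\intercal} = -\mathcal{N}$. What remains from the $\mu\langle y_{k+1}-x^{\star}, y_{k+1}-y_k\rangle$ expansion is the genuine gradient monotonicity piece $-\alpha_k\langle y_{k+1}-x^{\star}, \nabla D_F(\hat x_{k+1},x^{\star})\rangle$ together with $\alpha_k\mu\langle y_{k+1}-x^{\star}, \hat x_{k+1}-y_{k+1}\rangle$. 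Splitting $y_{k+1}-x^{\star} = (y_{k+1}-\hat x_{k+1}) + (\hat x_{k+1}-x^{\star})$ in the first of these, the $(y_{k+1}-\hat x_{k+1})$ contribution exactly cancels the earlier leftover, and the $(\hat x_{k+1}-x^{\star})$ contribution is bounded via \eqref{eq:DFmu} by $-\alpha_k D_F(\hat x_{k+1},x^{\star}) - \tfrac{\alpha_k\mu}{2}\|\hat x_{k+1}-x^{\star}\|^{2}$.

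Finally I collect the remaining $\mu$-scaled quadratic terms: after using $-\alpha_k D_F(\hat x_{k+1},x^{\star}) = -\alpha_k\hat{\mathcal{E}}_{k+1} + \tfrac{\alpha_k\mu}{2}\|y_{k+1}-x^{\star}\|^{2}$, the residual triple $\tfrac{\alpha_k\mu}{2}\|y_{k+1}-x^{\star}\|^{2} - \tfrac{\alpha_k\mu}{2}\|\hat x_{k+1}-x^{\star}\|^{2} + \alpha_k\mu\langle y_{k+1}-x^{\star}, \hat x_{k+1}-y_{k+1}\rangle$ collapses to $-\tfrac{\alpha_k\mu}{2}\|y_{k+1}-\hat x_{k+1}\|^{2}$ by the identity of squares \eqref{eq:squares} applied to $a = y_{k+1}-x^{\star}$, $b = \hat x_{k+1}-x^{\star}$. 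Assembling everything yields precisely the inequality \eqref{eq:hatdecay}.
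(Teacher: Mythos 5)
Your proof is correct and follows essentially the same route as the paper's: the paper bounds $\hat{\mathcal{E}}_{k+1}-\mathcal{E}_k$ via the $\mu$-convexity of $\mathcal E$, writes the scheme as implicit Euler plus the mis-match $\alpha_k(y_k-y_{k+1})$, and then invokes the strong Lyapunov property \eqref{eq:strong acc} of Lemma~\ref{lem: acc strong Lyapunov} at $(\hat x_{k+1},y_{k+1})$, which is precisely the computation you redo inline (the skew-symmetric cancellation, the bound \eqref{eq:DFmu}, and the identity of squares \eqref{eq:squares} collapsing the residual terms to $-\tfrac{\alpha_k\mu}{2}\|y_{k+1}-\hat x_{k+1}\|^2$). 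The only difference is that you unfold the content of that lemma rather than citing it, and your bookkeeping checks out, yielding exactly \eqref{eq:hatdecay}.
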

\begin{proof}
Direct computation using \eqref{eq: Bregman divergence identity} and \eqref{eq:squares} gives
 \begin{align*}
\hat{\mathcal{E}}_{k+1} - \mathcal{E}_k  ={}& \langle \partial_x \mathcal E(\hat{x}_{k+1}, y_{k+1}), \hat{x}_{k+1} - x_k \rangle + \langle \partial_y \mathcal E(\hat{x}_{k+1}, y_{k+1}), y_{k+1} - y_k \rangle\\
& -D_F(\hat x_{k+1}, x_k)  -\frac{\mu}{2}\left\|y_{k+1}-y_k\right\|^2.
\end{align*}
Then substitute
\begin{align*}
\hat{x}_{k+1} - x_k &= \alpha_k \mathcal G^x (\hat{x}_{k+1}, y_{k+1}) + \alpha_k (y_k - y_{k+1})\\
y_{k+1} - y_k & = \alpha_k \mathcal G^y (\hat{x}_{k+1}, y_{k+1}),
\end{align*}
and use the strong Lyapunov property~\eqref{eq:strong acc} at $(\hat{x}_{k+1}, y_{k+1})$ to get the desired result.
\end{proof}

The term $-\alpha_k \left\langle \nabla D_F(\hat x_{k+1}; x^{\star}) , y_{k+1}-y_k\right \rangle$ on the right hand side of~\eqref{eq:hatdecay} accounts for $-\alpha_k  \langle \partial_x \mathcal E(\hat{x}_{k+1}, y_{k+1}), y_{k+1} - y_k \rangle$ for using explicit $y_k$ in~\eqref{eq:imex1}, which is again a mis-match term compare with $\mathcal G(\hat{x}_{k+1}, y_{k+1})$ used in the implicit Euler scheme~\eqref{eq:AGIE1}.
The correction step~\eqref{eq:imex3} will be used to bound the mis-match term. We restate and generalize the result in ~\cite{luoDifferentialEquationSolvers2021a} in the following lemma.

\begin{lemma}\label{lem:correction step bound}
Assume $\nabla F$ is $L_F$-Lipschitz continuous and $(\hat x_{k+1},y_{k+1})$ is generated from $(x_k, y_k)$ such that: for the Lyapunov function \eqref{eq: acc Lyapunov, mixed}, there exists $c_1, c_2, c_3 >0$ satisfying 
\begin{equation}\label{eq:hatLyapunovdiff}
 \hat{\mathcal{E}}_{k+1} - \mathcal{E}_k \leq~ -\alpha_k c_1  \hat{\mathcal{E}}_{k+1} -\alpha_k c_2\left\langle \nabla D_F(\hat x_{k+1}, x^{\star}), y_{k+1}-y_k\right \rangle -\frac{c_3}{2}\left\|y_{k+1}-y_k\right\|_{\IX}^2. 
\end{equation}
Then set $x_{k+1}$ by the relation
\begin{equation}\label{eq:extrapolation}
(1+\alpha_k c_1)(x_{k+1} - \hat x_{k+1}) = \alpha_k c_2 (   y_{k+1} - y_k ). 
\end{equation}
and choose $\alpha_k>0$ satisfying
\begin{equation}\label{eq:alpha}
\alpha_k^2  L_Fc_2^2 \leq (1+\alpha_k c_1)c_3,
\end{equation}
we have the linear convergence
\begin{equation*}
    \mathcal{E}_{k+1} \leq \frac{1}{1+\alpha_k c_1} \mathcal{E}_k.
\end{equation*}
\end{lemma}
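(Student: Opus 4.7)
The plan is to bound $\mathcal E_{k+1} - \hat{\mathcal E}_{k+1}$ in terms of $y_{k+1} - y_k$ and then add it to the hypothesis~\eqref{eq:hatLyapunovdiff} so that the mis-match inner product cancels and the step-size condition~\eqref{eq:alpha} absorbs the remaining quadratic term. Since $\mathcal E$ and $\hat{\mathcal E}$ share the same $y$-component (see~\eqref{eq:hatE}), the difference reduces to $D_F(x_{k+1},x^{\star}) - D_F(\hat x_{k+1},x^{\star})$, a purely $x$-type quantity that can be controlled through the extrapolation identity~\eqref{eq:extrapolation}.

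First, I would use the $L_F$-smoothness of $F$ (hence of $D_F(\cdot,x^{\star})$, since $\nabla D_F(\cdot,x^{\star}) = \nabla F(\cdot)-\nabla F(x^{\star})$) to write
\begin{equation*}
D_F(x_{k+1},x^{\star}) - D_F(\hat x_{k+1},x^{\star}) \leq \langle \nabla D_F(\hat x_{k+1},x^{\star}),\, x_{k+1}-\hat x_{k+1}\rangle + \frac{L_F}{2}\|x_{k+1}-\hat x_{k+1}\|^2.
\end{equation*}
Next, substitute the extrapolation $x_{k+1}-\hat x_{k+1} = \frac{\alpha_k c_2}{1+\alpha_k c_1}(y_{k+1}-y_k)$ from~\eqref{eq:extrapolation} and multiply the resulting bound by $(1+\alpha_k c_1)$ to obtain
\begin{equation*}
(1+\alpha_k c_1)\bigl(\mathcal E_{k+1}-\hat{\mathcal E}_{k+1}\bigr) \leq \alpha_k c_2\langle \nabla D_F(\hat x_{k+1},x^{\star}), y_{k+1}-y_k\rangle + \frac{L_F\alpha_k^2 c_2^2}{2(1+\alpha_k c_1)}\|y_{k+1}-y_k\|^2.
\end{equation*}

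Now I would rewrite~\eqref{eq:hatLyapunovdiff} as $(1+\alpha_k c_1)\hat{\mathcal E}_{k+1} \leq \mathcal E_k - \alpha_k c_2\langle \nabla D_F(\hat x_{k+1},x^{\star}), y_{k+1}-y_k\rangle - \tfrac{c_3}{2}\|y_{k+1}-y_k\|^2$ and add it to the previous inequality. The inner product terms cancel exactly by design of the coefficient $c_2$ in~\eqref{eq:extrapolation}, yielding
\begin{equation*}
(1+\alpha_k c_1)\mathcal E_{k+1} \leq \mathcal E_k + \frac{1}{2}\left(\frac{L_F\alpha_k^2 c_2^2}{1+\alpha_k c_1} - c_3\right)\|y_{k+1}-y_k\|^2.
\end{equation*}
The step-size condition~\eqref{eq:alpha} is precisely what makes the parenthesized coefficient nonpositive, so dropping that term gives the claimed contraction $\mathcal E_{k+1}\leq (1+\alpha_k c_1)^{-1}\mathcal E_k$.

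The only subtle point is the bookkeeping: I need to remember that $\hat{\mathcal E}_{k+1}$ and $\mathcal E_{k+1}$ have identical $y$-parts (so no $\|y-x^{\star}\|^2$ terms enter the comparison), and that the specific choice of the constant in~\eqref{eq:extrapolation}, namely $\alpha_k c_2/(1+\alpha_k c_1)$, is calibrated to exactly cancel the cross term after multiplying through by $(1+\alpha_k c_1)$. Once this alignment is set up, the proof is a one-line combination; the verification of step-size condition~\eqref{eq:alpha} handles the quadratic residual.
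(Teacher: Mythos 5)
Your proposal is correct and follows essentially the same argument as the paper: bound $\mathcal E_{k+1}-\hat{\mathcal E}_{k+1}$ by the $L_F$-smoothness of $D_F(\cdot,x^{\star})$, scale by $(1+\alpha_k c_1)$, and add to the hypothesis so that the extrapolation relation cancels the cross term and the step-size condition absorbs the quadratic residual. The only cosmetic difference is that you express the residual in terms of $\|y_{k+1}-y_k\|^2$ while the paper writes it in terms of $\|x_{k+1}-\hat x_{k+1}\|^2$; these are equivalent via the extrapolation identity.
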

\begin{proof}
Using the relation~\eqref{eq:extrapolation}, we rewrite~\eqref{eq:hatLyapunovdiff} into
\begin{equation}\label{eq:hatLyapunovdiff new}
\begin{aligned}
    &(1+\alpha_k c_1)\hat{\mathcal{E}}_{k+1} - \mathcal{E}_k \\
\leq&~  -(1+c_1\alpha_k)\left\langle  \nabla D_F(\hat x_{k+1}, x^{\star}), x_{k+1} - \hat x_{k+1}\right\rangle -\frac{(1+\alpha_k c_1)^2c_3}{2\alpha_k^2 c_2^2}\left\|x_{k+1} - \hat x_{k+1}\right\|^2.
\end{aligned}
\end{equation}
As $D_F(\cdot, x^{\star}) \in \mathcal S_{\mu,L_F}$, we have
\begin{equation}\label{eq: correction decay}
\begin{aligned}
        \mathcal{E}_{k+1} - \hat{\mathcal{E}}_{k+1} &= D_F(x_{k+1}, x^{\star}) - D_F(\hat{x}_{k+1}, x^{\star}) \\
&        \leq \left\langle \nabla D_F(\hat x_{k+1}, x^{\star}),x_{k+1} - \hat x_{k+1}\right\rangle + \frac{L_F}{2}\|x_{k+1} - \hat x_{k+1}\|^2.
\end{aligned}
\end{equation}

Summing $(1+c_1\alpha_k)\times$\eqref{eq: correction decay} and~\eqref{eq:hatLyapunovdiff new} we get
\begin{equation*}
    \begin{aligned}
         (1+c_1\alpha_k) \mathcal{E}_{k+1} - \mathcal{E}_k \leq -\left(\frac{(1+\alpha_k c_1)^2c_3}{2\alpha_k^2 c_2^2  }-\frac{(1+\alpha_k c_1)L_F}{2 }\right)\left\|x_{k+1} - \hat x_{k+1}\right\|^2 \leq 0
    \end{aligned}
\end{equation*}
according to our choice of $\alpha_k$. Rearrange the terms to get the desired inequality.
\end{proof}

\begin{remark}\rm 
The largest step size $\alpha_k$ satisfying~\eqref{eq:alpha} is given by
$$
\alpha_k = \frac{c_1c_3+\sqrt{c_1^2c_3^2 + 4L_F c_2^2c_3}}{2L_F c_2^2} \geq \frac{1}{c_2}\sqrt{\frac{c_3}{L_F}},
$$
where the last one is a simplified formula for the step size satisfying~\eqref{eq:alpha}.
\end{remark}

We showed the decay of Lyapunov function in Lemma \ref{lem: acc gra method semi-implicit decay} satisfying the assumption of Lemma \ref{lem:correction step bound} with $c_1=c_2= 1, c_3 = \mu$ and the correction step~\eqref{eq:imex3} matches the relation~\eqref{eq:extrapolation}. As a result, we state the following accelerated linear convergence rate of the IMEX scheme~\eqref{eq:semi-implicit}.

\begin{theorem}\label{thm: linear convergence of AGSS IMEX scheme}
Assume function $F \in \mathcal{S}_{\mu, L_F} $. Let $(x_k, y_k)$ be the sequence generated by the accelerated gradient method~\eqref{eq:semi-implicit} with arbitrary initial value and step size satisfying 
$$ 
\alpha_k^2 L_F \leq (1+\alpha_k) \mu,
$$ 
then for the Lyapunov function~\eqref{eq: acc Lyapunov, mixed},
\begin{equation*}
    \mathcal{E}_{k+1} \leq \frac{1}{1+\alpha_k } \mathcal{E}_k.
\end{equation*}
For $ \alpha_k =1/\sqrt{\kappa(F)}$, we achieve the accelerated rate
\begin{equation*}
    \mathcal{E}_k \leq \left( \frac{1}{1+1/\sqrt{\kappa(F)}} \right)^{k} \mathcal{E}_0,
\end{equation*}
which implies
\begin{equation*}
\|x_{k}-x^{\star}\|^2 + \|y_{k}-x^{\star}\|^2 \leq \left (\frac{1}{1+1/\sqrt{\kappa(F)} }\right )^{k} 2 \mathcal{E}_0/\mu.
\end{equation*}
\end{theorem}

For linear problems, in comparison with the HSS scheme~\eqref{eq:HSS}, we achieve the same accelerated rate for both linear and nonlinear systems without the need to compute the inverse of the symmetric part $(\beta I + \nabla^2 F)^{-1}$. The efficiency of our algorithm is confirmed through computations on a convection-diffusion model in Section \ref{sec: Convection-diffusion model}. When relaxing to an inexact inner solver for the skew-symmetric part, our algorithm surpasses HSS methods. We provide an analysis using perturbation arguments to control the accumulation of the inner solve error in the following sub-section.

\subsection{Inexact Solvers for the Shifted Skew-symmetric System}\label{sec: Inexact Solver for the Shifted Skew Symmetric System}
In practice, we can relax the inner solver to be an inexact approximation. That is we solve equation~\eqref{eq:I+N 2} up to a residual $\varepsilon_{\rm in} = b - \left [ (1+\alpha_k)I +\alpha_k \mu^{-1} \mathcal N \right ] y_{k+1}$. The scheme can be modified to
\begin{subequations}\label{eq:inexact-implicit}
\begin{align}
\label{eq:inexactimex1}    \frac{\hat{x}_{k+1}-x_k}{\alpha_k} &= y_k - \hat{x}_{k+1},  \\
\label{eq:inexactimex2}        \frac{y_{k+1}-y_k}{\alpha_k} &=  \hat{x}_{k+1} - y_{k+1} -\mu^{-1}\left( \nabla F(\hat{x}_{k+1}) + \mathcal Ny_{k+1}\right) - \frac{\varepsilon_{\rm in}}{\alpha_k}, \\
\label{eq:inexactimex3}    \frac{x_{k+1}-x_k}{\alpha_k} &=  y_{k+1} -\frac{1}{2} (x_{k+1} + \hat x_{k+1}).
\end{align}
\end{subequations}
Notice that in the third step~\eqref{eq:inexactimex3}, we use $\frac{1}{2} (x_{k+1} + \hat x_{k+1})$ instead of $x_{k+1}$ for discretization variable $x$ at $k+1$. The perturbation $\varepsilon_{\rm in}$ is the residual of the linear equation \eqref{eq:I+N 2}.

\begin{corollary}\label{cor:inexact IMEX}
Assume function $F \in \mathcal{S}_{\mu, L_F} $. If we compute $y_{k+1}$ such that the residual of~\eqref{eq:inexactimex2}  satisfies
\begin{equation}
\| \varepsilon_{\rm in} \|^2 \leq \frac{\alpha_k}{2 } \left (\| \hat{x}_{k+1} - x_k\|^2 + \alpha_k \| y_{k+1} - \hat x_{k+1}\|^2 \right ),
\end{equation}
then  for $(x_k, y_k)$ be the sequence generated by the inexact accelerated gradient method~\eqref{eq:inexact-implicit} with arbitrary initial value and step size satisfying 
$$
\alpha_k^2L_F \leq (1+\alpha_k/2)\mu,
$$
we have the linear convergence with repect to the Lyapunov function \eqref{eq: acc Lyapunov, mixed}:
\begin{equation*}
    \mathcal{E}_{k+1} \leq \frac{1}{1+\alpha_k/2} \mathcal{E}_k.
\end{equation*}
For $\alpha_k=\sqrt{1/\kappa(F)}$, we achieve the accelerated rate
\begin{equation*}
    \mathcal{E}_k \leq \left( \frac{1}{1+\sqrt{1/\kappa(F)}} \right)^{k} \mathcal{E}_0.
\end{equation*}
\end{corollary}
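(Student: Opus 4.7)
My plan is to reduce this to the framework of Theorem \ref{thm: linear convergence of IMEX scheme} (which in turn invokes Lemma \ref{lem:correction step bound}), treating the inner residual $\varepsilon_{\rm in}$ as a controlled perturbation. I would first mimic the proof of Lemma \ref{lem: acc gra method semi-implicit decay} by expanding $\hat{\mathcal{E}}_{k+1} - \mathcal{E}_k$ using the $\mu$-strong convexity of $\mathcal{E}$, then substituting $\hat x_{k+1} - x_k = \alpha_k \mathcal G^x(\hat x_{k+1},y_{k+1}) - \alpha_k(y_{k+1}-y_k)$ and, this time, $y_{k+1} - y_k = \alpha_k \mathcal G^y(\hat x_{k+1},y_{k+1}) - \varepsilon_{\rm in}$. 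Invoking the strong Lyapunov property (Lemma \ref{lem: acc strong Lyapunov}) at $(\hat x_{k+1},y_{k+1})$ exactly as before gives an inequality identical to \eqref{eq:hatdecay}, except with one extra cross term coming from the perturbation, namely $-\langle \nabla_y \mathcal E(\hat x_{k+1},y_{k+1}), \varepsilon_{\rm in}\rangle = -\mu\langle y_{k+1}-x^{\star}, \varepsilon_{\rm in}\rangle$.

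The crux is to absorb this error term using the residual bound. I would first split $-\alpha_k \hat{\mathcal E}_{k+1}$ as $-\tfrac{\alpha_k}{2}\hat{\mathcal E}_{k+1} - \tfrac{\alpha_k}{2}\hat{\mathcal E}_{k+1}$, and use the second half, together with $\tfrac{\mu}{2}\|y_{k+1}-x^{\star}\|^2 \leq \hat{\mathcal E}_{k+1}$, to majorize $\tfrac{\alpha_k \mu}{4}\|y_{k+1}-x^{\star}\|^2$. Then by Young's inequality with weight $\alpha_k/2$,
\begin{equation*}
  \mu\,|\langle y_{k+1}-x^{\star},\varepsilon_{\rm in}\rangle| \leq \tfrac{\alpha_k \mu}{4}\|y_{k+1}-x^{\star}\|^2 + \tfrac{\mu}{\alpha_k}\|\varepsilon_{\rm in}\|^2.
\end{equation*}
Applying the hypothesis $\|\varepsilon_{\rm in}\|^2 \leq \tfrac{\alpha_k}{2}\bigl(\|\hat x_{k+1}-x_k\|^2 + \alpha_k\|y_{k+1}-\hat x_{k+1}\|^2\bigr)$ turns the last summand into $\tfrac{\mu}{2}\|\hat x_{k+1}-x_k\|^2 + \tfrac{\alpha_k\mu}{2}\|y_{k+1}-\hat x_{k+1}\|^2$, which is precisely cancelled by the two negative terms already present in \eqref{eq:hatdecay}. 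What remains is
\begin{equation*}
  \hat{\mathcal E}_{k+1} - \mathcal E_k \leq -\tfrac{\alpha_k}{2}\hat{\mathcal E}_{k+1} - \alpha_k\langle \nabla D_F(\hat x_{k+1},x^{\star}), y_{k+1}-y_k\rangle - \tfrac{\mu}{2}\|y_{k+1}-y_k\|^2,
\end{equation*}
which is exactly \eqref{eq:hatLyapunovdiff} with $c_1 = 1/2$, $c_2 = 1$, $c_3 = \mu$.

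A short direct computation then verifies that the modified correction \eqref{eq:inexactimex3} is equivalent to $(1+\alpha_k/2)(x_{k+1}-\hat x_{k+1}) = \alpha_k(y_{k+1}-y_k)$: just eliminate $x_k$ using the predictor step \eqref{eq:inexactimex1}. This matches the relation \eqref{eq:extrapolation} with the same constants. Invoking Lemma \ref{lem:correction step bound} under the step-size condition $\alpha_k^2 L_F \leq (1+\alpha_k/2)\mu$ yields the contraction $\mathcal E_{k+1} \leq (1+\alpha_k/2)^{-1}\mathcal E_k$, and the choice $\alpha_k = 1/\sqrt{\kappa(F)}$ saturates this bound and gives the advertised accelerated rate. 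The main technical obstacle is the first paragraph's absorption: the residual must be measured in a norm that is controlled by the negative terms $\tfrac{\mu}{2}\|\hat x_{k+1}-x_k\|^2$ and $\tfrac{\alpha_k \mu}{2}\|y_{k+1}-\hat x_{k+1}\|^2$ left over by the strong Lyapunov property, and one loses exactly a factor of two in $c_1$ (hence in the condition on $\alpha_k$) as the price of accommodating the perturbation; the rest of the argument then replays the exact IMEX analysis verbatim.
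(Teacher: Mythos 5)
Your proposal is correct and follows essentially the same route as the paper: treat the inexactness as a mis-match term $\mu\langle y_{k+1}-x^{\star},\varepsilon_{\rm in}\rangle$, absorb it via Young's inequality using half of $-\alpha_k\hat{\mathcal E}_{k+1}$ and the two leftover quadratic terms in \eqref{eq:hatdecay}, then invoke Lemma \ref{lem:correction step bound} with $c_1=1/2$, $c_2=1$, $c_3=\mu$ after checking that \eqref{eq:inexactimex3} is the relation \eqref{eq:extrapolation}. Your verification of the correction-step identity $(1+\alpha_k/2)(x_{k+1}-\hat x_{k+1})=\alpha_k(y_{k+1}-y_k)$ is a detail the paper leaves implicit, but the argument is otherwise the paper's proof.
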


\begin{proof}
We write~\eqref{eq:inexactimex2} as 
$$
y_{k+1} - y_k = \alpha_k \mathcal G^y (\hat{x}_{k+1}, y_{k+1}) + \varepsilon_{\rm in}.
$$
Compared with Lemma \ref{lem: acc gra method semi-implicit decay}, the inexactness introduces a mis-match term $ \mu(y_{k+1} - x^{\star}, \epsilon_{\rm out})$ in $\langle \partial_y \mathcal E(\hat{x}_{k+1}, y_{k+1}), y_{k+1} - y_k \rangle$ which can be bounded by 
\begin{align*}
 |(y_{k+1} - x^{\star}, \epsilon_{\rm out}) |&\leq \frac{\alpha_k\mu}{4} \|y_{k+1} - x^{\star}\|^2 + \frac{\mu }{\alpha_k }\|\varepsilon_{\rm in}\|^2 \\
&\leq \frac{\alpha_k \mu}{4} \|y_{k+1} - x^{\star}\|^2 + \frac{\mu}{2} \left (\| \hat{x}_{k+1} - x_k\|^2 +  \alpha_k \| y_{k+1} - \hat x_{k+1}\|^2 \right ).
\end{align*}
Use $- \alpha_k \hat{\mathcal{E}}_{k+1} /2$ to cancel the first term and the additional quadratic term in~\eqref{eq:hatdecay} to cancel the second. Then we have 
\begin{equation}\label{eq: implcit-explicit decay}
    \begin{aligned}
    \hat{\mathcal{E}}_{k+1} - \mathcal{E}_k
    \leq - \frac{\alpha_k}{2} \hat{\mathcal{E}}_{k+1} -\alpha_k\left\langle \nabla D_F(\hat x_{k+1}, x^{\star}), y_{k+1}-y_k\right \rangle-\frac{\mu}{2}\left\|y_{k+1}-y_k\right\|^2,
\end{aligned}
\end{equation}
which with the correction step~\eqref{eq:inexactimex3} satisfying assumptions in Lemma \ref{lem:correction step bound} with $c_1= 1/2, c_2 =1$ and $c_3 = \mu$. 
\end{proof}

When we choose $\alpha_k = \sqrt{1/\kappa(F)}$, \eqref{eq:I+N 2} forms a shifted skew-symmetric equation $(\beta I + \mathcal N)y = b(\hat x_{k+1}, y_k)$ with $\beta = (1+ \sqrt{\mu/L_F})\sqrt{\mu L_F}$ . Solvers for the shifted skew-symmetric equation is discussed in Section \ref{sec: gradient methods}. In particular, the inner iteration steps are roughly
$C_{\rm in} \sqrt{\kappa (F)} L_{B^{\sym}}/L_F.$
The outer iteration is $C_{\rm out}\sqrt{\kappa(F)}$. Constants $C_{\rm in} = \mathcal O(|\ln \varepsilon_{\rm in} |)$ and $C_{\rm out} = \mathcal O(|\ln \epsilon_{\rm out}|)$ depend on the tolerance $\varepsilon_{\rm in}$ for the inner iteration and $\epsilon_{\rm out}$ for the outer iteration. Therefore \eqref{eq:inexact-implicit} requires
\begin{itemize}
 \item $\nabla F(x_k)$  gradient evaluation: $C_{\rm out}\sqrt{\kappa(F)}$;
 
 \smallskip
 
 \item $(\beta I + \mathcal N)x$ matrix-vector multiplication: $C_{\rm in} C_{\rm out} \tilde{\kappa}(B^{\sym})$,
\end{itemize}
where we use the relation $\kappa (F) L_{B^{\sym}}/L_F = \tilde{\kappa}(B^{\sym})$.

\subsection{Accelerated Gradient and Skew-symmetric Splitting Methods}
Combining the IMEX scheme in Section \ref{sec: IMEX scheme} and AOR in Section \ref{sec: AOR for nonlinear problems}, we propose the following explicit discretization of the accelerated gradient flow: 
\begin{subequations}\label{eq:explicit}
\begin{align}
\label{eq:AGexplicit1}    \frac{\hat x_{k+1}-x_k}{\alpha} &=  y_k - \hat x_{k+1}, \\
\label{eq:AGexplicit2}   \frac{y_{k+1}-y_k}{\alpha} &= \hat x_{k+1} - y_{k+1}- \frac{1}{\mu}\left( \nabla F(\hat x_{k+1}) + B^{\sym} y_k - 2B y_{k+1} \right), \\
\label{eq:AGexplicit3}      \frac{x_{k+1}-x_{k}}{\alpha} &= y_{k+1} -\frac{1}{2} (x_{k+1} + \hat x_{k+1}).
\end{align}
\end{subequations}
The update of $y_{k+1}$ is equivalent to solve a lower triangular linear algebraic system
\begin{equation*}
  \left( (1+\alpha) I -\frac{2\alpha}{\mu} B \right )y_{k+1} = b(\hat x_{k+1}, y_k)
\end{equation*}
with $b(\hat{x}_{k+1},y_k) = y_k+\alpha \hat x_{k+1} - \frac{\alpha}{\mu} \nabla F(\hat x_{k+1}) - \frac{\alpha}{\mu} (B^{\intercal}+ B) y_k$ which can be computed efficiently by a forward substitution and thus no inexact inner solver is needed. Subtracting~\eqref{eq:AGexplicit3} from~\eqref{eq:AGexplicit1} implies the relation~\eqref{eq:extrapolation} with $c_1=1/2, c_2 = 1$. 

Consider the modified Lyapunov function:
\begin{equation}\label{eq: acc discrete Lyapunov}
\begin{aligned}
\mathcal E^{\alpha B}(x, y) &=  D_F(x, x^{\star}) + \frac{1}{2}\|y-x^{\star}\|^2_{ \mu I -  \alpha B^{\sym}}.
\end{aligned}
\end{equation}
For $\displaystyle 0 <  \alpha <\frac{\mu}{L_{B^{\sym}}}$, we have $\mu I - \alpha B^{\sym}$ is positive definite. Then $\mathcal{E} \geq 0$ and $\mathcal E= 0$  if and only if $x = y = x^{\star}$. We denote 
$$
\mathcal E^{\alpha B}_k =\mathcal E^{\alpha B}(x_k, y_k), \text{ and } \quad \hat{\mathcal{E}}^{\alpha B}_k =\mathcal E^{\alpha B}(\hat{x}_k, y_k).
$$

\begin{lemma}\label{lem: acc gra method decay}
Assume function $F \in \mathcal{S}_{\mu} $. Let $(\hat x_k, y_k)$ be the sequence generated by the accelerated gradient method~\eqref{eq:explicit} with $0< \alpha \leq \displaystyle \frac{\mu}{2L_{B^{\sym}}}$. Then, for $k\geq 0$, the modified Lyapunov function~\eqref{eq: acc discrete Lyapunov} satisfies
\begin{equation}\label{eq:acc gra method decay}
\begin{aligned}
\hat{\mathcal{E}}^{\alpha B}_{k+1} - \mathcal E^{\alpha B}_k \leq&~ -\frac{\alpha}{2} \hat{\mathcal{E}}^{\alpha B}_{k+1} - \alpha\left\langle \nabla D_F(\hat x_{k+1}, x^{\star}), y_{k+1}-y_k\right\rangle -\frac{\mu}{4}\left\|y_{k+1}-y_k\right\|^2. 
\end{aligned}
\end{equation}
\end{lemma}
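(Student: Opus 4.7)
The proof plan parallels that of Lemma \ref{lem: acc gra method semi-implicit decay}, treating~\eqref{eq:explicit} as a perturbation of the IMEX scheme~\eqref{eq:semi-implicit} and compensating for the explicit $B^{\rm sym}y_k$ through the modification of the Lyapunov function. The crucial algebraic identity is
\begin{equation*}
B^{\rm sym}y_k - 2B y_{k+1} = \mathcal N y_{k+1} + B^{\rm sym}(y_k - y_{k+1}),
\end{equation*}
since $B^{\rm sym}-2B = B^{\intercal}-B = \mathcal N$. Thus~\eqref{eq:AGexplicit2} can be rewritten as $y_{k+1}-y_k = \alpha\mathcal G^y(\hat x_{k+1},y_{k+1}) + \frac{\alpha}{\mu}B^{\rm sym}(y_{k+1}-y_k)$, i.e., $\frac{1}{\mu}M(y_{k+1}-y_k) = \alpha \mathcal G^y(\hat x_{k+1},y_{k+1})$ with $M = \mu I -\alpha B^{\rm sym}$.

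The first step is to split $\hat{\mathcal E}^{\alpha B}_{k+1}-\mathcal E^{\alpha B}_k$ into its $D_F$-part and its $y$-part. For the $D_F$-part, apply $\mu$-convexity of $D_F(\cdot,x^{\star})$ to bound it by $\langle \nabla D_F(\hat x_{k+1},x^{\star}), \hat x_{k+1}-x_k\rangle - \tfrac{\mu}{2}\|\hat x_{k+1}-x_k\|^2$. For the $y$-part, use the identity for squares~\eqref{eq:squares} with the (possibly indefinite) matrix $M$ to write it as $(y_{k+1}-x^{\star}, y_{k+1}-y_k)_M - \tfrac12\|y_{k+1}-y_k\|^2_M$. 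The key cancellation is
\begin{equation*}
(y_{k+1}-x^{\star}, y_{k+1}-y_k)_M = \alpha\mu\,(y_{k+1}-x^{\star}, \mathcal G^y(\hat x_{k+1}, y_{k+1})),
\end{equation*}
which follows from the rewritten scheme above. So the explicit $B^{\rm sym}$ term is absorbed exactly by the $-\alpha B^{\rm sym}$ in $M$.

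Next, substitute $\hat x_{k+1}-x_k = \alpha(y_{k+1}-\hat x_{k+1}) + \alpha(y_k - y_{k+1})$ in the $D_F$-cross term. The piece involving $y_{k+1}-\hat x_{k+1}$ combines with $\alpha\mu(y_{k+1}-x^{\star},\mathcal G^y)$ into $\alpha\,\nabla\mathcal E(\hat x_{k+1},y_{k+1})\cdot\mathcal G(\hat x_{k+1}, y_{k+1})$, where $\mathcal E$ is the original Lyapunov function~\eqref{eq: acc Lyapunov, mixed}. Apply the strong Lyapunov property of Lemma \ref{lem: acc strong Lyapunov} to replace this by $-\alpha\mathcal E(\hat x_{k+1},y_{k+1}) - \tfrac{\alpha\mu}{2}\|y_{k+1}-\hat x_{k+1}\|^2$. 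The remaining $D_F$-piece is exactly the mis-match term $-\alpha\langle \nabla D_F(\hat x_{k+1},x^{\star}), y_{k+1}-y_k\rangle$ desired in~\eqref{eq:acc gra method decay}.

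The last step is to convert $-\alpha\mathcal E(\hat x_{k+1},y_{k+1})$ into $-\tfrac{\alpha}{2}\hat{\mathcal E}^{\alpha B}_{k+1}$ and $-\tfrac12\|y_{k+1}-y_k\|^2_M$ into $-\tfrac{\mu}{4}\|y_{k+1}-y_k\|^2$. Both rely on the step size constraint $\alpha \leq \mu/(2L_{B^{\rm sym}})$: this makes $\mu I + \alpha B^{\rm sym}\geq 0$ (which yields $\mathcal E \geq \tfrac12 \mathcal E^{\alpha B}$) and $M \geq \tfrac{\mu}{2}I$ (which controls $\|\cdot\|_M^2$ from below by $\tfrac{\mu}{2}\|\cdot\|^2$). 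Dropping the remaining non-positive terms $-\tfrac{\mu}{2}\|\hat x_{k+1}-x_k\|^2$ and $-\tfrac{\alpha\mu}{2}\|y_{k+1}-\hat x_{k+1}\|^2$ yields the claim. The main obstacle is bookkeeping the sign of $B^{\rm sym}$-weighted quadratics, but once the identity $\tfrac{1}{\mu}M(y_{k+1}-y_k) = \alpha \mathcal G^y$ is identified, the algebra unfolds in close parallel to the IMEX proof.
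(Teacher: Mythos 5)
Your argument is correct and takes essentially the same route as the paper: both rewrite~\eqref{eq:AGexplicit2} as an implicit-Euler correction with the extra $\frac{\alpha}{\mu}B^{\rm sym}(y_{k+1}-y_k)$ term, invoke the strong Lyapunov property of Lemma~\ref{lem: acc strong Lyapunov} at $(\hat x_{k+1},y_{k+1})$, and use $\alpha\le \mu/(2L_{B^{\rm sym}})$ to make $\mu I+\alpha B^{\rm sym}$ and $\mu I-2\alpha B^{\rm sym}$ positive semidefinite. The only difference is bookkeeping: you absorb the explicit $B^{\rm sym}$ term at once through the $(\mu I-\alpha B^{\rm sym})$-weighted identity of squares, whereas the paper first bounds $\hat{\mathcal E}_{k+1}-\mathcal E_{k}$ as in the IMEX lemma and then adds the $\alpha B^{\rm sym}$ bridge terms via~\eqref{eq: E and EB bridge} — the same computation, rearranged.
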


\begin{proof}
 We write the difference of the modified Lyapunov function into 
 \begin{equation}
    \begin{aligned} \label{eq: E and EB bridge}
\hat{\mathcal{E}}^{\alpha B}_{k+1} - \mathcal E^{\alpha B}_k=&~\hat{\mathcal{E}}_{k+1}  -\mathcal{E}_{k}-\frac{1}{2}\| y_{k+1} - x^{\star}\|^2_{\alpha B^{\sym}}  + \frac{1}{2}\| y_k - x^{\star}\|^2_{\alpha B^{\sym}}. 
\end{aligned}  
\end{equation}
where $\hat{\mathcal{E}}_{k+1}, \mathcal{E}_{k}$ are defined in~\eqref{eq:hatE} and $\mathcal{E}$ refers to the Lyapunov function~\eqref{eq: acc Lyapunov, mixed}.
Since $\mathcal E$ is $\mu$-convex,
 \begin{align*}
\hat{\mathcal{E}}_{k+1}  -\mathcal{E}_{k}  \leq{}& \langle \partial_x \mathcal{E}(\hat x_{k+1}, y_{k+1}) , \hat{x}_{k+1} - x_k \rangle -\frac{\mu }{2} \| \hat{x}_{k+1} - x_k\|^2 \\
&~ \langle \partial_y  \mathcal{E}(\hat{x}_{k+1}, y_{k+1}), y_{k+1} - y_k \rangle -\frac{\mu }{2} \| y_{k+1} - y_k\|^2. 
\end{align*}
Then write the scheme as a correction of the implicit Euler scheme 
\begin{align*}
\hat{x}_{k+1} - x_k &= \alpha \mathcal G^x (\hat{x}_{k+1}, y_{k+1}) +  \alpha (y_k - y_{k+1}) ,\\
y_{k+1} - y_k & = \alpha \mathcal G^y (\hat{x}_{k+1}, y_{k+1}) +\frac{\alpha}{\mu} B^{\sym}  (y_{k+1} - y_{k}).
\end{align*}

According to the proof of Lemma \ref{lem: acc gra method semi-implicit decay}, we get 
\begin{equation}\label{eq:intermediate decay}
\begin{aligned}
\hat{\mathcal{E}}_{k+1} - \mathcal{E}_k \leq&~ -\alpha \hat{\mathcal{E}}_{k+1}-\alpha\left\langle \nabla D_F(\hat x_{k+1}, x^{\star}), y_{k+1}-y_k\right\rangle -\frac{\mu}{2}\left\|y_{k+1}-y_k\right\|^2  \\
&+ (y_{k+1} - x^{\star}, y_{k+1} - y_k )_{\alpha B^{\sym}}.
\end{aligned}
\end{equation}
We use the identity~\eqref{eq:squares} to expand the last cross term in \eqref{eq:intermediate decay} as
\begin{align*}
	&(y_{k+1} - x^{\star}, y_{k+1} - y_k )_{\alpha B^{\sym}} \\
	&=  \frac{1}{2}\| y_{k+1} -x^{\star} \|_{\alpha  B^{\sym}}^2 + \frac{1}{2} \| y_{k+1} -y_k \|_{ \alpha B^{\sym}}^2 - \frac{1}{2} \| y_{k} -x^{\star} \|_{\alpha  B^{\sym}}^2.
\end{align*}
Substitute back to~\eqref{eq:intermediate decay} and rearrange terms using~\eqref{eq: E and EB bridge}, 
\begin{equation*}
\begin{aligned}
\hat{\mathcal{E}}^{\alpha B}_{k+1} - \mathcal E^{\alpha B}_k \leq&~ -\alpha \hat{\mathcal{E}}_{k+1}-\alpha\left\langle \nabla D_F(\hat x_{k+1}, x^{\star}), y_{k+1}-y_k\right\rangle -\frac{1}{2}\left\|y_{k+1}-y_k\right\|^2_{\mu I -  \alpha B^{\sym}} \\
=&~ -\frac{\alpha}{2} \hat{\mathcal{E}}^{\alpha B}_{k+1} -\frac{\alpha}{2} D_F(\hat x_{k+1}, x^{\star})- \frac{\alpha }{4} \|y_{k+1} - x^{\star}\|_{\mu I + \alpha B^{\sym}} \\
&-\alpha\left\langle \nabla D_F(\hat x_{k+1}, x^{\star}), y_{k+1}-y_k\right\rangle 
-\frac{1}{2}\left\|y_{k+1}-y_k\right\|^2_{ \mu I-\alpha B^{\sym}} \\
=&~ -\frac{\alpha}{2} \hat{\mathcal{E}}^{\alpha B}_{k+1}-\alpha\left\langle \nabla D_F(\hat x_{k+1}, x^{\star}), y_{k+1}-y_k\right\rangle -\frac{\mu}{4}\left\|y_{k+1}-y_k\right\|^2\\
- \frac{\alpha}{2} D_F&(\hat x_{k+1}, x^{\star})- \frac{\alpha}{4} \|y_{k+1} - x^{\star}\|_{\mu I + \alpha B^{\sym}}-\frac{1}{4}\left\|y_{k+1}-y_k\right\|^2_{\mu I- 2\alpha B^{\sym}} .
\end{aligned}
\end{equation*}
According to our choice $\alpha \leq \displaystyle \frac{\mu}{2L_{B^{\sym}}}$,  both $\mu I - 2\alpha B^{\sym}$ and $\mu I + \alpha B^{\sym}$ are SPD. Dropping the last three non-positive terms we have the desired result.
\end{proof}

The decay of the modified Lyapunov function~\eqref{eq:acc gra method decay} and the appropriate relation~\eqref{eq:extrapolation} satisfies assumptions of Lemma \ref{lem:correction step bound} with $c_1=1/2, c_2 = 1$, and $c_3 = \mu/2$. Although the Lyapunov function is slightly different,~\eqref{eq: correction decay} still holds for $\mathcal E^{\alpha B}$. We conclude with the following linear convergence rate.

\begin{theorem}\label{thm: linear convergence of AGSS explicit scheme}
Assume function $F \in \mathcal{S}_{\mu, L_F} $. Let $(x_k, y_k)$ be the sequence generated by the accelerated gradient method~\eqref{eq:explicit} with arbitrary initial value and step size satisfying $$0< \alpha \leq \min \left \{\frac{\mu}{2L_{B^{\sym}}}, \sqrt{\frac{ \mu}{2L_F}}\right \},$$ then for the modified Lyapunov function~\eqref{eq: acc Lyapunov, mixed},
\begin{equation*}
    \mathcal E^{\alpha B}_{k+1} \leq \frac{1}{1+\alpha/2 } \mathcal E^{\alpha B}_k.
\end{equation*}
In particular, $\displaystyle \alpha = \min \left \{\frac{\mu}{2L_{B^{\sym}}}, \sqrt{\frac{ \mu}{2L_F}}\right \}$, we achieve the accelerated rate
\begin{equation*}
   \|x_{k+1}-x^{\star}\|^2 \leq \frac{2}{\mu} \mathcal E^{\alpha B}_k \leq \left (1+ 1/\max\left \{ 4 \tilde{\kappa}( B^{\sym}), \sqrt{8\kappa(F)}\right \}\right)^{-k} \frac{2\mathcal{E}_0^{\alpha B}}{\mu}.
\end{equation*}
\end{theorem}




As expected, we have developed an explicit scheme~\eqref{eq:explicit} which achieves the accelerated rate.
Therefore the cost of gradient $ \nabla F(x_k)$ evaluation and matrix-vector $(\beta I + B)^{-1}b$ multiplication are both $C_{\rm out}\max\{\tilde{\kappa}( B^{\sym}), \sqrt{\kappa(F)}\}$. Compare with the inexact IMEX scheme \eqref{eq:semi-implicit}, we may need more gradient evaluation but less matrix-vector multiplication. If $\tilde{\kappa}( B^{\sym}) \gg \sqrt{\kappa(F)}$ and $\nabla F(x_k)$ is computationally expensive to evaluate, then the IMEX scheme \eqref{eq:semi-implicit} or its inexact version \eqref{eq:inexact-implicit} is favorable; otherwise if the error tolerance for the inexact inner solve is small, i.e., $C_{\rm in} $ is large, and the cost of $(\beta I + \mathcal N)x$ is comparable to the evaluation of $ \nabla F(x_k)$, then the explicit scheme \eqref{eq:explicit} takes advantage.

\section{Nonlinear Saddle Point Systems}\label{sec:saddle point systems}
In this section, we derive algorithms for strongly-convex-strongly-concave saddle point system which achieve optimal lower complexity. Moreover, by taking advantage of the block structure of the saddle point problems, we introduce the preconditioned version of GSS and AGSS methods.

\subsection{Problem Setting}
Consider the nonlinear smooth saddle point system with bilinear coupling:
\begin{equation}\label{eq: min-max problem}
    \min_{u\in \mathcal V} \max_{p \in \mathcal Q} \mathcal{L}(u,p) = f(u) - g(p) + (Bu - b,p)
\end{equation}
where $\mathcal V=\mathbb{R}^m, \mathcal Q=\mathbb{R}^n, m\geq n$ are finite-dimensional Hilbert spaces with inner product induced by SPD operators $\IV, \IQ$, respectively. Functions $f(u) \in \mathcal S_{\mu_{f}, L_{f} },$ and $g(p) \in \mathcal S_{\mu_{g}, L_{g} }$, are measured in $\IV$ norm and $\IQ$ norm, respectively. The operator $B$ is an $n\times m$ matrix of full rank. 

The point $(u^{\star}, p^{\star})$ solves the min-max problem~\eqref{eq: min-max problem} is said to be a saddle point of $\mathcal{L}(u,p)$, that is
$$\mathcal{L}(u^{\star}, p) \leq \mathcal{L}(u^{\star}, p^{\star}) \leq  \mathcal{L}(u, p^{\star})\quad \forall \ (u,p)\in \mathbb R^m \times \mathbb R^n.$$ 
The saddle point $(u^*, p^*)$ satisfies the first order necessary condition for being the critical point of $\mathcal{L}(u, p)$:
\begin{subequations}\label{eq:critical point system}
\begin{align}
\label{eq:du}    \nabla f(u^{\star}) + B^{\intercal}p^{\star} = 0, \\
\label{eq:dp}    -Bu^{\star} + \nabla g(p^{\star}) = 0.
\end{align}
\end{subequations}
The first equation~\eqref{eq:du} is $\partial_u \mathcal L(u^{\star}, p^{\star})= 0$ but the second one~\eqref{eq:dp} is $- \partial_p \mathcal L(u^{\star}, p^{\star})= 0$. The negative sign is introduced so that  ~\eqref{eq:critical point system} is in the form  $$\mathcal A(x^{\star}) = 0,$$
where $x = (u,p) \in \mathcal V \times \mathcal Q$, $\nabla F = 
\begin{pmatrix}
 \nabla f & 0\\
 0 & \nabla g
\end{pmatrix}
$, and $\mathcal N = \begin{pmatrix}
0 & \;  B^{\intercal} \\
-B & \;  0
\end{pmatrix}$. To avoid confusion, we use the notation $\mathcal B^{\sym} = \begin{pmatrix}
0 & \;  B^{\intercal} \\
B & \;  0
\end{pmatrix} $ in this section. The splitting ~\eqref{eq:Bs-B} becomes
$$\mathcal N 
=  \begin{pmatrix}
0 & \;  B^{\intercal} \\
B & \;  0
\end{pmatrix} -  \begin{pmatrix}
0 & \; 0 \\
2B & \; 0
\end{pmatrix},
\quad 
\text{ and }
\quad
\mathcal N = 
\begin{pmatrix}
 I & \; 0 \\
 0 & \; -I
\end{pmatrix}
\mathcal B^{\sym}.
$$
Therefore $\|\mathcal N\| = \| \mathcal B^{\sym} \|$ for any operator norm.

Given two SPD operators $\IV$ and $\IQ$, we denote by $\IX :=
\begin{pmatrix}
\IV & 0\\
0 & \IQ
\end{pmatrix},$
and
$\mu \IX :=
\begin{pmatrix}
\mu_f\IV & 0\\
0 & \mu_g\IQ
\end{pmatrix}.$
Then for any $ x = (u,p),y = (v,q) \in  \mathcal V \times \mathcal Q,$
\begin{equation*}
    (\mathcal{A}(x) - \mathcal{A}(y), x-y) \geq  \|x-y\|_{\mu \IX}^2.
\end{equation*}
The accelerated gradient flow and the discrete schemes follows from discussion in Section \ref{sec:AGSS flow and schemes}. The results are slightly sharp by including the preconditioner $\IX$ as a block diagonal matrix.

\subsection{Accelerated Gradient Flow for Saddle Point Problems}
The component form of \eqref{eq:AG} becomes the preconditioned accelerated gradient flow for the saddle point system:
\begin{equation}\label{eq:AG saddle}
\begin{aligned}
     u^{\prime} &= v - u ,\\
     p^{\prime} &= q - p,\\
     v^{\prime} & = u - v - \mu_f^{-1}\IV^{-1}(\nabla f(u) + B^{\intercal}q), \\
     q^{\prime} & =p - q -\mu_g^{-1}\IQ^{-1}(\nabla g(p) - Bv).
\end{aligned}
\end{equation}
Recall that $\IV^{-1}, \IQ^{-1}$ are SPD operators considered as preconditioner or change of metric. We employ the inverse notation based on the intuition that a preconditioner can serve as a linear solver to approximate the matrix inverse, and $\IV, \IQ$ will become a novel metric for the variable spaces. Our analysis is designed to provide a general framework that offers the flexibility to incorporate various metrics tailored to specific problems. In particular, when $\IV^{-1}, \IQ^{-1}$ are identities, then \eqref{eq:AG saddle} reduced to the accelerated gradient flow \eqref{eq:AG} under typical Euclidean metric. 
 
Consider the Lyapunov function:
\begin{equation}\label{eq: acc Lyapunov saddle}
\mathcal{E}(x, y) := D_F(x,x^{\star}) +\frac{1}{2}\| y - x^{\star}\|_{\mu \IX}^2,
\end{equation}
where $ D_F(x,x^{\star}) := D_f(u, u^{\star}) +  D_g(p, p^{\star})$. As $f, g$ are strongly convex, $\mathcal{E}(x, y)\geq 0$ and $\mathcal{E}(x, y)= 0$ iff $x=y=x^{\star}$. 
%
Denote the vector field on the right hand side of~\eqref{eq:AG saddle} by $\mathcal G(x,y)$. It is straightforward to verify the strong Lyapunov property following Lemma \ref{lem: acc strong Lyapunov}:
\begin{equation}\label{eq:strong saddle}
\begin{aligned}
- \nabla \mathcal E(x,y)\cdot \mathcal G(x,y) \geq \mathcal E(x,y)+\frac{1}{2}\|y-x\|^2_{\mu \IX}.
\end{aligned}
\end{equation}

\subsection{GSS and AGSS Methods for Saddle Point Problems}

\subsubsection{GSS method}

For saddle point problems, the preconditioned GSS method \eqref{eq: nonlinear SOR 1} is written as 
\begin{equation}\label{eq:GSS_saddle}
\begin{aligned}
     \frac{u_{k+1} - u_k}{\alpha} &= -\mu_f^{-1} \IV^{-1} \left (\nabla f(x_k)+ B^{\intercal}p_k \right), \\
      \frac{p_{k+1} - p_k}{\alpha} &= -\mu_g^{-1}\IQ^{-1}\left (\nabla g(x_k)+ Bu_k - 2Bu_{k+1}\right).
\end{aligned}
\end{equation}
Consider the modified  Lyapunov function
 \begin{equation}\label{eq:Lyapunov_GSS_saddle}
    \mathcal E^{\alpha BD}(x_k):= \frac{1}{2}\|x_k-x^{\star}\|^2_{\mu \IX - \alpha \mathcal B^{\sym}}  -\alpha D_F(x^{\star}, x_k)
\end{equation}

 We first show $\mathcal E^{\alpha BD}$ is non-negative if $\alpha$ is sufficiently small. 

\begin{lemma}\label{lem: positivity of Lyapunov}
Assume $f\in \mathcal{S}_{\mu_{f}}$ and  $g\in \mathcal{S}_{\mu_{g}}$. When the step size $\alpha \leq  \sqrt{\mu_f\mu_g/(4L_S)}$ with $L_{S} = \lambda_{\max} (\IQ^{-1} B\IV^{-1}B^{\intercal})$, the matrix
$$
\mu \IX - 2\alpha \mathcal B^{\sym} =
\begin{pmatrix}  \mu_f\IV & -2\alpha B^{\intercal} \\
    -2\alpha B &  \mu_g\IQ
   \end{pmatrix}
$$ 
is symmetric and positive semidefinite.
%
\end{lemma}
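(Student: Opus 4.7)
Symmetry is immediate, since $\IV$ and $\IQ$ are SPD (so $\Imu$ is symmetric) and $\mathcal B^{\rm sym} = \begin{pmatrix} 0 & B^{\intercal} \\ B & 0 \end{pmatrix}$ is symmetric by construction; the whole content of the lemma is in the positive semidefiniteness. My plan is to establish this via a Schur complement test with respect to the $(1,1)$-block.

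Since $\mu_f \IV$ is SPD, the symmetric $2\times 2$ block matrix $\Imu - 2\alpha \mathcal B^{\rm sym}$ is PSD if and only if the Schur complement
\begin{equation*}
\mu_g \IQ \,-\, (-2\alpha B)\,(\mu_f \IV)^{-1}\,(-2\alpha B^{\intercal}) \;=\; \mu_g \IQ - \frac{4\alpha^2}{\mu_f}\, B \IV^{-1} B^{\intercal}
\end{equation*}
is positive semidefinite on $\mathcal Q$. Measured in the $\IQ$-inner product, this is equivalent to
$\lambda_{\max}\bigl(\IQ^{-1} B \IV^{-1} B^{\intercal}\bigr) \leq \mu_f \mu_g/(4\alpha^2)$, i.e.\ $L_S \leq \mu_f\mu_g/(4\alpha^2)$, which rearranges to exactly the hypothesized step-size bound $\alpha \leq \sqrt{\mu_f \mu_g/(4 L_S)}$.

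As a sanity check, one can also verify PSD by expanding the quadratic form on an arbitrary vector $(v,q) \in \mathcal V \times \mathcal Q$, obtaining
\begin{equation*}
\mu_f \|v\|_\IV^2 - 4\alpha\,(q, Bv) + \mu_g \|q\|_\IQ^2,
\end{equation*}
bounding the cross term via Cauchy--Schwarz in the $\IQ$-inner product together with the definition of $L_S$, namely $|(q,Bv)| \leq \|q\|_\IQ \|Bv\|_{\IQ^{-1}} \leq \sqrt{L_S}\, \|q\|_\IQ \|v\|_\IV$, and then applying weighted AM--GM with the weight chosen to balance the two squared norms. This alternative route yields the same threshold on $\alpha$ but is slightly more cluttered than the Schur complement computation. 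No substantive obstacle arises in either approach; the only real choice is which presentation to favor, and I would lead with the Schur complement argument since it makes the appearance of the Schur-complement operator $S = B\IV^{-1}B^{\intercal}$ transparent and dovetails with the condition number $\kappa_{\IQ}(S)$ that drives the subsequent convergence analysis.
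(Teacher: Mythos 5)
Your Schur complement argument is correct and is essentially the paper's own proof: the paper establishes positive semidefiniteness via the block congruence factorization in~\eqref{eq:Mx-B}, which is exactly the Schur complement test with respect to the $(1,1)$-block, reducing the claim to $\mu_g \IQ - \frac{4\alpha^2}{\mu_f}B\IV^{-1}B^{\intercal}\geq 0$ under the stated bound on $\alpha$. Your auxiliary quadratic-form/Cauchy--Schwarz check is also sound but not needed.
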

\begin{proof}
We have the block matrix factorization
$$
\begin{aligned}
& \begin{pmatrix} \mu_f\IV & -2\alpha B^{\intercal} \\
    -2\alpha B &  \mu_g\IQ
   \end{pmatrix} \\
   &=\begin{pmatrix} I_m & 0 \\
     -2\alpha \mu_f^{-1}B\IV^{-1} & I_n
   \end{pmatrix}\begin{pmatrix}  \IV & 0 \\
    0 & \mu_g\IQ- 4\alpha^2 \mu_f^{-1}B\IV^{-1}B^{\intercal}
   \end{pmatrix} \begin{pmatrix}I_m & -2\alpha \mu_f^{-1} \IV^{-1} B^{\intercal} \\
  0  & I_n
   \end{pmatrix}.
\end{aligned}
$$
Then if $\alpha \leq  \sqrt{\mu_f\mu_g/(4L_S)}$, we have $\mu_g\IQ- 4\alpha^2 \mu_f^{-1}B\IV^{-1}B^{\intercal}\geq 0$ and the results follows.
\end{proof}

As a result, one can easily show that $\mathcal{E}^{\alpha B} \geq 0$ if $$\alpha < \min \{\sqrt{\mu_f\mu_g/(4L_S)}, \mu_g/(2L_f), \mu_g/(2L_g)\}$$ and  $\mathcal E^{\alpha BD}(x)= 0$  if and only if $x = x^{\star}$. We skip the proof since it is similar to that of Lemma \ref{lem:positivity of GSS Lyapunov}.

The convergence analysis will depend on the condition number of the Schur complement $S = B\IV^{-1}B^{\intercal}$ in the $\IQ$ inner product.

\begin{lemma}\label{lem: LB saddle}
  For $\mathcal B^{\sym} = \begin{pmatrix}
        0 & \;  B^{\intercal} \\
        B & \;  0
    \end{pmatrix}$, we have 
    \begin{equation*}
        L_{\mathcal B^{\sym}} := \|\mathcal B^{\sym}\|_{\IX} =\sqrt{ L_S} = \lambda_{\max}^{1/2}(\IQ^{-1} B\IV^{-1}B^{\intercal}).
    \end{equation*}
\end{lemma}

\begin{proof}
    Since $\mathcal B^{\sym} \in \mathbb{R}^{(m+n)^2}$ is symmetric, $\mathcal B^{\sym}$ has $(m+n)$ real eigenvalues. Let $\lambda$ be the largest absolute value eigenvalue with respect to $\IX$ and $x = (u,p)$ be the corresponding eigenvector, that is $ \|\mathcal B^{\sym}\|_{\IX} = \lambda$ and 
    \begin{equation*}
        \mathcal B^{\sym} x = \lambda \IX x.
    \end{equation*}
    The component form follows as 
    \begin{equation*}
        \begin{aligned}
             Bu= \lambda \IQ p, \quad  B^{\intercal}p = \lambda \IV u.
        \end{aligned}
    \end{equation*}
    Substitute $u$ in the first equation side using the other equation, we get
    \begin{equation*}
                \begin{aligned}
            B\IV^{-1}B^{\intercal}p = \lambda^2\IQ p.
        \end{aligned}
    \end{equation*}
    Since $B\IV^{-1}B^{\intercal}$ is positive definite, we have $|\lambda|  = \sqrt{L_S}$, $L_S = \lambda_{\max}(\IQ^{-1}B\IV^{-1}B^{\intercal})$.
\end{proof}


Define $\mathcal{E}_k^{\alpha BD} := \mathcal E^{\alpha BD}(x_k)$ and $\tilde{\kappa}_{\IX}(\mathcal N):=\sqrt{L_S/(\mu_f\mu_g)}$. With $\kappa(F)$ refined to $\kappa_{\IV}(f)$ and $\kappa_{\IQ}(g)$, 
we state the linear convergence of the GSS method for saddle point problems as a direct corollary of Theorem \ref{thm: linear convergence for generalized nonlinear system}.

\begin{theorem}\label{thm: linear convergence for GSS_saddle}
Let $\{u_k, p_k\}$ be the sequence generated by the GSS method~\eqref{eq:GSS_saddle} with arbitrary initial guess $(u_0, p_0)$ and step size $$0<  \alpha< \displaystyle \min \left \{ \frac{1}{2 \tilde{\kappa}_{\IX}(\mathcal N)}, \frac{1}{2\kappa_{\IV}(f)}, \frac{1}{2\kappa_{\IQ}(g)} \right \}.$$ 
Then for the discrete Lyapunov function~\eqref{eq:Lyapunov_GSS_saddle}, we have 
\begin{equation*}
\begin{aligned}
\mathcal{E}^{\alpha BD}(x_{k+1})\leq&~ \frac{1}{1+ \alpha}\mathcal{E}^{\alpha BD}(x_k).
\end{aligned}
\end{equation*}
In particular, for $ \alpha = \displaystyle 1/ \max \left \{ 4 \tilde{\kappa}_{\IX}(\mathcal N), 4\kappa_{\IV}(f), 4\kappa_{\IQ}(g) \right \}$, we have
\begin{equation*}
\| x_k - x^{\star}\|_{\mu \IX}^2 \leq \left (1 + 1/ \max \left \{ 4 \tilde{\kappa}_{\IX}(\mathcal N), 4\kappa_{\IV}(f), 4\kappa_{\IQ}(g) \right \}\right )^{-k} 6\| x_0 - x^{\star}\|_{\mu \IX}^2.
\end{equation*}
\end{theorem}

\subsubsection{Implicit in the gradient part}
We can use the generalized gradient flow \eqref{eq: gradient flow} and treat $\nabla F$ implicitly and $\mathcal N$ explicitly with AOR
\begin{equation}\label{eq:IMEX saddle prox}
\begin{aligned}
    u_{k+1} &= \operatorname{prox}_{\alpha/\mu_f f, \IV}(u_k - \alpha \mu_f^{-1} \IV^{-1}B^{\intercal}p_k), \\
   p_{k+1}  &= \operatorname{prox}_{\alpha/\mu_g g, \IQ}(p_k -  \alpha \mu_g^{-1} \IQ^{-1}B(u_k - 2u_{k+1})).
\end{aligned}
\end{equation}
%
When $\IV, \IQ$ are identities, that matched the typical proximal operators defined in~\eqref{eq:proxl2}. The parameters $\mu_f$ and $\mu_g$ are proper scaling for later analysis. Notice that the proximal operation works for non-smooth functions. 

Consider the Lyapunov function for AOR methods: 
\begin{equation}\label{eq: Lyapunov saddle prox}
\begin{aligned}
    	\mathcal{E}^{\alpha B}(u, p) &= \frac{\mu_f}{2}\|u-u^{\star}\|_{\IV}^2 + \frac{\mu_g}{2}\|p-p^{\star}\|_{\IQ}^2 - \alpha  (B(u-u^{\star}), p- p^{\star})\\
     &= \frac{1}{2} \| x- x^{\star} \|^2_{ \mu \IX - \alpha \mathcal B^{\sym}}.
\end{aligned}
\end{equation}
Similar to Lemma \ref{lem: positivity of Lyapunov}, on can show $\mathcal{E}^{\alpha B}(x) \geq 0$ for $0\leq \alpha<1/\tilde{\kappa}_{\IX}(\mathcal N)$ and $\mathcal{E}^{\alpha B}(x)= 0$  if and only if $x = x^{\star}$. As $\nabla F$ is treated implicitly, the following result can be proved similar to Theorem \ref{thm: linear convergence for the shifted skew-symmetric linear system}. Recall that $$\tilde{\kappa}_{\IX}(\mathcal N):=\sqrt{L_S/(\mu_f\mu_g)} =  \sqrt{ \lambda_{\max}(\IQ^{-1}B\IV^{-1}B^{\intercal})/(\mu_f\mu_g)}.$$ 

\begin{theorem}\label{thm: linear convergence for the saddle prox}
Assume $f\in \mathcal{S}_{\mu_{f}}$ and  $g\in \mathcal{S}_{\mu_{g}}$. Let $\{x_k\}$ be the sequence generated by ~\eqref{eq:IMEX saddle prox} with arbitrary initial guess $x_0$ and step size $0< \alpha< 1/\tilde{\kappa}_{\IX}(\mathcal N)$. Then for the Lyapunov function~\eqref{eq: Lyapunov saddle prox},
	\begin{equation}\label{eq:prox saddle rate}
		\begin{aligned}
			\mathcal{E}^{\alpha B}( x_{k+1}) \leq&~ \frac{1}{1+\alpha }\mathcal{E}^{\alpha B} (x_{k}).
		\end{aligned}
	\end{equation}
In particular, for $ \alpha = 1/(2\tilde{\kappa}_{\IX}(\mathcal N))$, we have
\begin{equation*}
\| x_k - x^{\star}\|^2_{ \mu \IX} \leq \left (1 + 1/(2\tilde{\kappa}_{\IX}(\mathcal N)\right )^{-k} 3\| x_0 - x^{\star}\|^2_{  \mu \IX}.
\end{equation*}
\end{theorem}
As a special case when $\IV$ and $\IQ$ are identity matrices,  $\tilde{\kappa}_{\IX}(\mathcal N) = \|B\|/\sqrt{\mu_f\mu_g}$ is indeed the optimal lower bound shown in~\cite{zhang2022lower}. Proximal methods combining 
overrelaxation parameters are considered and yielding optimal rates; see~\cite{chambolle2011first,chambolle2016ergodic,jacobs2019solving} for variants of schemes and applications. Here we extend the framework to proximal operators under general metric other than Euclidean norm, which has wider application for example in~\cite{el2017general,hou2013linear}. 

\subsubsection{AGSS method}

Recall that $x = (u,p),y = (v,q)$. The accelerated gradient and skew-symmetric splitting method is:
\begin{subequations}\label{eq:explicit saddle}
\begin{align}
 \frac{\hat x_{k+1}-x_k}{\alpha} &= y_k - \hat x_{k+1}, \\
\label{eq:explicitsaddle1}       \frac{v_{k+1}-v_k}{\alpha} &=\hat{u}_{k+1} - v_{k+1} -\mu_f^{-1}\IV^{-1} \left( \nabla f(\hat{u}_{k+1}) + B^{\intercal}q_{k}\right) , \\
\label{eq:explicitsaddle2}        \frac{q_{k+1}-q_k}{\alpha} &=  \hat{p}_{k+1} - q_{k+1}-\mu_g^{-1}\IQ^{-1}\left( \nabla g(\hat{p}_{k+1}) -2Bv_{k+1} + Bv_k \right) , \\
   \frac{x_{k+1}-x_{k}}{\alpha} &=y_{k+1} - x_{k+1} + \frac{1}{2}(x_{k+1} - \hat x_{k+1}).
\end{align}
\end{subequations}
Each iteration requires $2$ matrix-vector products if we store $Bv_k$, $2$ gradient evaluations and the computation of $\IV^{-1}$ and $\IQ^{-1}$.
Notice that the scheme is explicit as $v_{k+1}$  can be first updated by~\eqref{eq:explicitsaddle1} and then used to generate $q_{k+1}$ in~\eqref{eq:explicitsaddle2}.

Consider the tailored discrete Lyapunov function:
\begin{equation}\label{eq: acc discrete Lyapunov saddle}
\begin{aligned}
\mathcal E^{\alpha B}(x, y) :={}& D_F(x,x^{\star}) + \frac{1}{2}\| y - x^{\star}\|_{\mu \IX - \alpha \mathcal B^{\sym}}^2. 
\end{aligned}
\end{equation}
As a result of Lemma \ref{lem: positivity of Lyapunov}, $\mathcal{E}^{\alpha B} \geq 0$ if $\alpha \leq  \sqrt{\mu_f\mu_g/(4L_S)}$ and  $\mathcal{E}^{\alpha B}(x, y) = 0$ only if $x = x^{\star}$. Using the first order necessary conditions ~\eqref{eq:critical point system}, the Bregman divergence part can be related to the duality gap $\Delta (u,p):= \max_q \mathcal L(u, q) - \min_v \mathcal L(v, p)  $:
\begin{equation*}
    \begin{aligned}
    D_F(x, x^{\star})= &~  D_f(u; u^{\star}) + D_g(p; p^{\star}) \\
    =&~  f(u) - f(u^{\star}) - \langle \nabla f(u^{\star}), u - u^{\star}\rangle + g(p) -g(p^{\star}) - \langle  \nabla g(p^{\star}), p - p^{\star}\rangle \\
    =&~f(u) - f(u^{\star}) + \langle B^{\intercal}p^{\star}, u - u^{\star}\rangle + g(p) -g(p^{\star}) - \langle Bu^{\star}, p - p^{\star}\rangle \\
    =&~ \mathcal L(u, p^{\star}) - \mathcal L(u^{\star}, p) \leq \Delta (u,p) . 
    \end{aligned}
\end{equation*}

Define $\mathcal{E}_k^{\alpha B} := \mathcal E^{\alpha B}(x_k, y_k)$ and recall that $\tilde{\kappa}_{\IX}(\mathcal N)=\sqrt{L_S/(\mu_f\mu_g)}$. With $\kappa(F)$ refined to $\kappa_{\IV}(f)$ and $\kappa_{\IQ}(g)$, 
we have the following linear convergence as a direct corollary of Theorem \ref{thm: linear convergence of AGSS explicit scheme}.

\begin{theorem}\label{thm: linear convergence of explicit scheme saddle}
Assume $f\in \mathcal{S}_{\mu_{f}, L_{f}}$ and  $g\in \mathcal{S}_{\mu_{g}, L_{g}}$. Let $(x_k, y_k)$ be the sequence generated by the accelerated gradient and skew-symmetric splitting method~\eqref{eq:explicit saddle}  with arbitrary initial value and step size satisfying 
$$
0<  \alpha \leq \displaystyle \min \left \{ \frac{1}{2 \tilde{\kappa}_{\IX}(\mathcal N)}, \sqrt{\frac{1}{2\kappa_{\IV}(f)}}, \sqrt{\frac{1}{2\kappa_{\IQ}(g)}} \right\}.
$$
Then for the discrete Lyapunov function~\eqref{eq: acc discrete Lyapunov saddle},
\begin{equation*}
    \mathcal{E}_{k+1}^{\alpha B}\leq \frac{1}{1+ \alpha/2} \mathcal{E}_{k}^{\alpha B}.
\end{equation*}
In particular, for $\alpha =  1/\max \left \{2\tilde{\kappa}_{\IX}(\mathcal N), \sqrt{2\kappa_{\IV}(f)}, \sqrt{2\kappa_{\IQ}(g)}\right \}$, we achieve the accelerated rate
\begin{equation*}
\begin{aligned}
&\mu_f \|u_{k}-u^{\star}\|_{\IV}^2 + \mu_g\|p_{k}-p^{\star}\|_{\IQ}^2  \\
&\leq \left (1+ 1/\max \left \{4\tilde{\kappa}_{\IX}(\mathcal N), 2\sqrt{2\kappa_{\IV}(f)}, 2\sqrt{2\kappa_{\IQ}(g)}\right \}\right)^{-k} 2\mathcal{E}_0^{\alpha B}.
\end{aligned}
\end{equation*}
\end{theorem}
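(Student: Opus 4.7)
The plan is to recognize \eqref{eq:explicit saddle} as the abstract scheme \eqref{eq:explicit} applied to the saddle point operator with everything measured in the block-diagonal inner product induced by $\mathcal{I}_\mu$, and then invoke Theorem \ref{thm: linear convergence of explicit scheme} directly. Working in the $\mathcal I_\mu$-norm absorbs the preconditioners $\IV^{-1}, \IQ^{-1}$ appearing in \eqref{eq:explicitsaddle1}--\eqref{eq:explicitsaddle2} into the discretization of $-\mathcal I_\mu^{-1}(\nabla F(x)+\mathcal N y)$, and simultaneously rescales the strong monotonicity constant of the saddle operator to $\mu=1$. Under this rescaling the role of $F$ is played by $f+g$ and the role of the skew-symmetric part is played by $\mathcal N = \left(\begin{smallmatrix} 0 & B^{\intercal} \\ -B & 0\end{smallmatrix}\right)$, whose symmetrization along the upper triangle is exactly $\mathcal B^{\rm sym}$.

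Next, I would translate the constants. By definition, $\kappa_{\mathcal I_\mu}(F) = \max\{\kappa_{\IV}(f), \kappa_{\IQ}(g)\}$, so the second and third entries of the three-way minimum in the step size correspond to $\sqrt{\mu/(2L_F)}$ in the abstract theorem, with $\mu=1$ and the two different Lipschitz constants split out. For the skew part, Lemma \ref{lem: LB saddle} gives
\begin{equation*}
L_{\mathcal B^{\rm sym}} = \|\mathcal B^{\rm sym}\|_{\mathcal I_\mu} = \sqrt{L_S/(\mu_f \mu_g)} = \kappa_{\mathcal I_\mu}(\mathcal N),
\end{equation*}
so $\mu/(2 L_{\mathcal B^{\rm sym}}) = \sqrt{\mu_f\mu_g/(4 L_S)}$, which recovers the first entry in the step size bound. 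Positivity of $\mathcal E^{\alpha B}$ (needed for the abstract theorem) is supplied by Lemma \ref{lem: positivity of Lyapunov}, which guarantees $\mathcal I_\mu - 2\alpha \mathcal B^{\rm sym} \geq 0$ and hence also $\mathcal I_\mu - \alpha \mathcal B^{\rm sym} > 0$ under our restriction on $\alpha$.

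Once the abstract decay $\mathcal E_{k+1}^{\alpha B}\leq (1+\alpha/2)^{-1} \mathcal E_k^{\alpha B}$ is in hand, I would convert back to the stated norm bound. Strong convexity gives $D_F(x_k,x^\star) \geq \tfrac12\|x_k-x^\star\|_{\Imu}^2$, and dropping $D_F$ in the definition of $\mathcal E^{\alpha B}$ yields $\mathcal E_k^{\alpha B} \geq \tfrac12\|y_k-x^\star\|_{\mathcal I_\mu - \alpha \mathcal B^{\rm sym}}^2$; a short computation using the Schur factorization \eqref{eq:Mx-B} with $\alpha$ replaced by $\alpha/2$ shows $\mathcal I_\mu - \alpha \mathcal B^{\rm sym} \geq \tfrac12 \mathcal I_\mu$ under the chosen step size, and therefore
\begin{equation*}
\mu_f\|v_k-u^\star\|_{\IV}^2 + \mu_g\|q_k-p^\star\|_{\IQ}^2 \lesssim \mathcal E_k^{\alpha B}.
\end{equation*}
Combining with the analogous bound for $x_k$ via the correction step and the triangle inequality yields the stated estimate on $\|u_k-u^\star\|_{\IV}^2 + \|p_k-p^\star\|_{\IQ}^2$, with $\mu = \min\{\mu_f, \mu_g\}$ in the denominator.

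The main obstacle I expect is bookkeeping rather than any new analytic difficulty: one must verify carefully that every constant appearing in Theorem \ref{thm: linear convergence of explicit scheme} reduces correctly when $\mu$ is replaced by the block operator $\mathcal I_\mu$, that $L_F$ splits into the two Lipschitz constants $L_f, L_g$ (so the ``$\sqrt{\mu/(2L_F)}$'' entry becomes the minimum of two square roots), and that the correction step in \eqref{eq:explicit saddle} agrees with \eqref{eq:AGexplicit3} componentwise. Once these identifications are checked and Lemmas \ref{lem: positivity of Lyapunov}--\ref{lem: LB saddle} are invoked, the theorem follows as a corollary without any further estimates.
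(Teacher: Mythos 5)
Your proposal matches the paper's own treatment: the paper obtains this theorem exactly by viewing \eqref{eq:explicit saddle} as the abstract scheme \eqref{eq:explicit} in the $\Imu$ inner product and quoting Theorem \ref{thm: linear convergence of explicit scheme}, with Lemma \ref{lem: positivity of Lyapunov} supplying positivity of $\mathcal E^{\alpha B}$ and Lemma \ref{lem: LB saddle} supplying $\kappa_{\Imu}(\mathcal N)$, so your constant translations are the intended argument. One small simplification: the final norm estimate follows directly from $D_F(x_k,x^{\star})\geq \frac{\mu}{2}\left(\|u_k-u^{\star}\|_{\IV}^2+\|p_k-p^{\star}\|_{\IQ}^2\right)$ inside $\mathcal E^{\alpha B}_k$, so the detour through $y_k$, the correction step, and a triangle inequality is unnecessary (and would not yield the sharp constant $2\mathcal E_0^{\alpha B}/\mu$ on its own).
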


For strongly-convex-strongly-concave saddle point systems, set $\IV$ and $\IQ$ to identity matrices, scheme~\eqref{eq:explicit saddle} is an explicit scheme achieving the lower complexity bound: $\Omega\left(\sqrt{\kappa(f)+\kappa^2(\mathcal N)+\kappa(g)} |\ln \epsilon |\right)$  
established in ~\cite{zhang2022lower}. Notice that in~\cite{zhang2022lower}, only the theoretical lower bound is proved and no algorithms are developed to match the lower bound. Ours are among the first few explicit schemes~\cite{jin2022sharper,kovalev2022accelerated,thekumparampil2022lifted} achieving this lower bound. The optimality of our algorithm is confirmed through the solution of empirical risk minimization in Section \ref{sec: Empirical risk minimization}.

The condition numbers can be improved using appropriate SPD preconditioners $\IV$ and $\IQ$ with the price of computing $\IV^{-1}$ and $\IQ^{-1}$. The term $\tilde{\kappa}_{\IX}(\mathcal N)$ might be the leading term compare with $\sqrt{\kappa_{\IV}(f)}$ and $\sqrt{\kappa_{\IQ}(g)}$. 
Preconditioner $\IQ^{-1}$ can be chosen so that $L_S$ is small. Namely $\IQ^{-1}$ is a preconditioner for the Schur complement $B\IV^{-1}B^{\intercal}$. For example, if $\IV = I_m$, then the ideal choice is $\IQ = (BB^{\intercal})^{-1}$ so that $L_S = 1$. Such choice of $\IQ$ may increase the condition number $\kappa_{\IQ}(g)$ as $(BB^{\intercal})^{-1}$ may not be a good preconditioner of $g$. Transformed primal dual methods developed in~\cite{chen2023transformed} will provide a better approach but cannot fit into our approach.

\subsubsection{Implicit in the skew-symmetric part}\label{sec: IMEX AGSS saddle}
The term $\tilde{\kappa}_{\IX}(\mathcal N)$ is due to the bilinear coupling or equivalently from the explicit treatment for the skew-symmetric component $\mathcal N$. We can treat $\mathcal N$ implicitly and obtain the following IMEX scheme:
\begin{subequations}\label{eq:semi-implicit saddle}
\begin{align}
   \frac{\hat{x}_{k+1}-x_k}{\alpha_k} &= y_k - \hat{x}_{k+1},  \\
       \frac{v_{k+1}-v_k}{\alpha_k} &= \hat{u}_{k+1} - v_{k+1}-  \label{eq:IMEXsaddlev} \mu_f^{-1}\IV^{-1} \left( \nabla f(\hat{u}_{k+1}) + B^{\intercal}q_{k+1}\right) , \\
\label{eq:IMEXsaddleq}      \frac{q_{k+1}-q_k}{\alpha_k} &= \hat{p}_{k+1} - q_{k+1} - \mu_g^{-1}\IQ^{-1}\left( \nabla g(\hat{p}_{k+1}) -Bv_{k+1}\right) , \\
   \frac{x_{k+1}-x_k}{\alpha_k} &= y_{k+1} - x_{k+1}.
\end{align}
\end{subequations}
As the skew-symmetric part is treat implicitly, the restriction $\alpha\leq 1/(2\tilde{\kappa}_{\IX}(\mathcal N))$ can be removed. We state the convergence result directly as the proofs are similar to discussion in Section \ref{sec: IMEX scheme} for IMEX schemes.

\begin{theorem}\label{thm: linear convergence of saddle IMEX scheme}
Assume $f\in \mathcal{S}_{\mu_{f}, L_{f}}$ and  $g\in \mathcal{S}_{\mu_{g}, L_{g}}$. Let $(x_k, y_k)$ be the sequence generated by the preconditioned accelerated gradient method~\eqref{eq:semi-implicit saddle} with arbitrary initial value and step size $\alpha_k$ satisfying $$ \alpha_k^2 L_{f}  \leq ( 1+\alpha_k) \mu_{f} , \quad \alpha_k^2 L_{g} \leq (1+\alpha_k) \mu_{g} ,$$ 
then for the Lyapunov function~\eqref{eq: acc Lyapunov saddle},
\begin{equation}\label{eq:IMEX decay saddle}
    \mathcal{E}_{k+1} \leq   \frac{1}{1+\alpha_k} \mathcal{E}_k.
\end{equation}
In particular for $ \alpha_k = 1/ \max \{\sqrt{\kappa_{\IV}(f)}, \sqrt{\kappa_{\IQ}(g)}\}$, we achieve the accelerated rate
\begin{equation*}
    \mathcal{E}_k \leq \left( 1+1/ \max \{\sqrt{\kappa_{\IV}(f)}, \sqrt{\kappa_{\IQ}(g)}\} \right)^{-k} \mathcal{E}_0.
\end{equation*}
\end{theorem}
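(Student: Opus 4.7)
The plan is to parallel the proof of Theorem~\ref{thm: linear convergence of IMEX scheme}, exploiting the block-diagonal splitting $\mathcal E = \mathcal E^u + \mathcal E^p$ of the Lyapunov function~\eqref{eq: acc Lyapunov saddle} so that the two strong convexity parameters $\mu_f,\mu_g$ and Lipschitz constants $L_f,L_g$ can be tracked separately, one block at a time.

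First I would establish a predictor decay estimate in the preconditioned setting. Using the $\Imu$-convexity of the Lyapunov function, writing each equation of the IMEX scheme~\eqref{eq:semi-implicit saddle} as a correction of the fully implicit Euler scheme, and applying the continuous strong Lyapunov property~\eqref{eq:strong saddle} at $(\hat x_{k+1}, y_{k+1})$, one obtains the saddle-point analog of~\eqref{eq:hatdecay},
\begin{equation*}
\begin{aligned}
\hat{\mathcal E}_{k+1} - \mathcal E_k \leq {}& -\alpha_k \hat{\mathcal E}_{k+1} - \alpha_k \langle \nabla D_F(\hat x_{k+1},x^{\star}), y_{k+1} - y_k\rangle\\
& - \tfrac{1}{2}\|y_{k+1} - y_k\|_{\Imu}^2 - \tfrac{\alpha_k}{2}\|y_{k+1} - \hat x_{k+1}\|_{\Imu}^2 - \tfrac{1}{2}\|\hat x_{k+1} - x_k\|_{\Imu}^2.
\end{aligned}
\end{equation*}
The crucial point is that because $\mathcal N$ is skew-symmetric, the cross term $(y_{k+1} - x^{\star},\mathcal N(y_{k+1}-x^{\star}))$ vanishes, so the implicit treatment of the coupling in~\eqref{eq:IMEXsaddlev}--\eqref{eq:IMEXsaddleq} contributes nothing to the decay identity and the $\tfrac{1}{2}\|y_{k+1}-y_k\|_{\Imu}^2$ term is preserved intact for the correction argument to consume.

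Next I would split the mis-match term $\langle \nabla D_F(\hat x_{k+1},x^{\star}), y_{k+1}-y_k\rangle$ into its $u$- and $p$-blocks and apply Lemma~\ref{lem:correction step bound} componentwise. The correction step $\tfrac{x_{k+1}-x_k}{\alpha_k} = y_{k+1}-x_{k+1}$ yields $(1+\alpha_k)(u_{k+1}-\hat u_{k+1}) = \alpha_k(v_{k+1}-v_k)$ and analogously for the $p$-block, matching the extrapolation identity~\eqref{eq:extrapolation} with $c_1 = c_2 = 1$ in each block. Using $f \in \mathcal S_{\mu_f,L_f}$ in the $\IV$-inner product (so $c_3 = \mu_f$ in the $u$-block) and $g \in \mathcal S_{\mu_g,L_g}$ in the $\IQ$-inner product (so $c_3 = \mu_g$ in the $p$-block), the step size conditions $\alpha_k^2 L_f \leq (1+\alpha_k)\mu_f$ and $\alpha_k^2 L_g \leq (1+\alpha_k)\mu_g$ are precisely what Lemma~\ref{lem:correction step bound} requires for the correction identity~\eqref{eq: correction decay} to absorb the quadratic residuals block by block. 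Adding the two resulting inequalities gives the contraction~\eqref{eq:IMEX decay saddle}.

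Finally, the choice $\alpha_k = 1/\max\{\sqrt{\kappa_{\IV}(f)},\sqrt{\kappa_{\IQ}(g)}\}$ satisfies both constraints since $\alpha_k^2 L_f/\mu_f \leq 1 \leq 1+\alpha_k$ and likewise for $g$, so iterating yields the stated accelerated rate. I expect the main obstacle to be the bookkeeping of the two different convexity and Lipschitz constants together with the two preconditioners: one must verify that the predictor decay splits cleanly into additive $u$- and $p$-contributions so that Lemma~\ref{lem:correction step bound} may legitimately be invoked in each block with its own triple $(c_1,c_2,c_3)$, and that the single step size $\alpha_k$ simultaneously meets both blockwise requirements. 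Once this splitting is justified, the skew-symmetry of $\mathcal N$ makes the coupling essentially free and the argument proceeds in direct analogy with the scalar IMEX proof.
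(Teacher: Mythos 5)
Your proposal is correct and follows essentially the route the paper intends: the paper gives no separate proof of this theorem but refers back to the strong Lyapunov property \eqref{eq:strong saddle} and the IMEX analysis of Section~\ref{sec: IMEX scheme}, i.e.\ the preconditioned analogs of Lemma~\ref{lem: acc gra method semi-implicit decay} and Lemma~\ref{lem:correction step bound} with $\mu I$ replaced by $\Imu$ and $L_F$ by the blockwise pairs $(L_f,\IV)$ and $(L_g,\IQ)$, which is exactly your argument. One small bookkeeping caution (the issue you yourself flag): the predictor decay inequality does not hold block-by-block, since the skew coupling terms $\mp\alpha_k\langle v_{k+1}-u^{\star}, B^{\intercal}(q_{k+1}-p^{\star})\rangle$ cancel only after summing the $u$- and $p$-blocks, so rather than invoking Lemma~\ref{lem:correction step bound} literally componentwise one should either carry these coupling terms along (they cancel in the sum) or apply the full $\Imu$-weighted decay once and repeat the short absorption computation of that lemma with block-diagonal constants; the conclusion and step-size conditions are unaffected.
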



We discuss the inner solve on~\eqref{eq:IMEXsaddlev}-\eqref{eq:IMEXsaddleq} which is equivalent to the following linear algebraic equation
\begin{equation}\label{eq:I+N saddle}
\begin{pmatrix}
 (1+\alpha_k)\mu_f \IV & \alpha_k B^{\intercal} \\
 -\alpha_k B & (1+\alpha_k) \mu_g \IQ
\end{pmatrix}\begin{pmatrix}
 v_{k+1} \\
 q_{k+1}
\end{pmatrix}  = b(\hat{x}_{k+1},y_k),
\end{equation}
with $b(\hat{x}_{k+1},y_k) = \IX( y_k + \alpha_k \hat x_{k+1} )- \alpha_k \nabla F(\hat x_{k+1})$. 
Solving equation~\eqref{eq:I+N saddle} basically costs the effort of solving
$$((1+\alpha_k)^2 \mu_f \mu_g \IQ + \alpha_k^2 B\IV^{-1}B^{\intercal})x = b,$$
which can be solved by preconditioned conjugate gradient methods.

As a direct application of Section \ref{sec: Inexact Solver for the Shifted Skew Symmetric System}, the inner solver can be inexact. We denote the tolerance by $|\ln \epsilon_{\rm in}|$. With the restriction on the step size $\alpha_k$, the outer iteration complexity is $O(\alpha_k^{-1} |\ln \epsilon_{\rm out}|)$.  
Therefore we conclude that IMEX scheme~\eqref{eq:semi-implicit saddle} requires
\begin{itemize}
\item gradient evaluation:  $\max\{\sqrt{\kappa_{\IV} (f)}, \sqrt{\kappa_{\IQ}(g)}\} |\ln \epsilon_{\rm out}|$,
\smallskip
\item matrix-vector multiplication: $ \tilde{\kappa}_{\IX}(\mathcal N) |\ln \epsilon_{\rm out}| |\ln \epsilon_{\rm in}|$,
\smallskip
\item preconditioners $\IV^{-1}$ and $\IQ^{-1}$: $ \displaystyle \tilde{\kappa}_{\IX}(\mathcal N)|\ln \epsilon_{\rm out}| |\ln \epsilon_{\rm in}|$,
\end{itemize}
which matches the optimal lower bound in~\cite{zhang2022lower}. 
The preconditioners $\IV^{-1}$ or $\IQ^{-1}$ are designed to balance the cost of gradient evaluation, matrix-vector multiplication and preconditioning solver computation.

\section{Numerical Examples}\label{sec: numerical ex}

In order to verify our theoretical results, we test for numerical experiments arising in numerical partial differential equations, optimal control problems and data science. In Table \ref{tab: Summary of numerical examples.}, we summarize the tested problems and algorithms for reference.

\begin{table}[htp]
\footnotesize
	\centering
	\caption{Summary of numerical examples.} 
	\renewcommand{\arraystretch}{1.25}
	\begin{tabular}{@{} c c c  c  @{}}
	\toprule
  Problem   & Tested algorithms  & Recommended algorithm 
\smallskip \\  \hline 

 Quadratic problems    & \makecell{explicit Euler scheme \eqref{eq: EE gradient}, \\ GSS method \eqref{eq: nonlinear SOR 1} and AGSS method \eqref{eq:explicit}}& \makecell{ $\tilde {\kappa}(\mathcal N)\gg \kappa(F) $: \eqref{eq: nonlinear SOR 1},\\ $\kappa(F) \gg \tilde {\kappa}(\mathcal N)$:\eqref{eq:explicit}}

\smallskip \\

 Convection-diffusion model    & \makecell{IMEX schemes \eqref{eq:semi-implicit} and \eqref{eq:inexact-implicit} \\ of AGSS method}   &  inexact IMEX scheme \eqref{eq:inexact-implicit} 
\smallskip \\

Empirical risk minimization & GSS method \eqref{eq:GSS_saddle} and AGSS method \eqref{eq:explicit saddle}  &  \makecell{ $\tilde {\kappa}(\mathcal N)\gg \kappa(F) $:  \eqref{eq:GSS_saddle},\\ $\kappa(F) \gg \tilde {\kappa}(\mathcal N)$: \eqref{eq:explicit saddle}}

%
\smallskip \\
\bottomrule
	\end{tabular}
	\label{tab: Summary of numerical examples.}
\end{table}

\subsection{Quadratic Problems}\label{sec: Quadratic problems}
We consider synthetic
quadratic problems of the form
\begin{equation}\label{eq: quadratic problem}
    \min_x \frac{1}{2}(x, Lx) - (b, x)
\end{equation}
where $L$ is a positive definite matrix. Then \eqref{eq: quadratic problem} admits the unique solution such that $Lx^{\star} = b$. We decompose $L$ into symmetric part $A$ and skew-symmetric part $\mathcal N$:
$$
A = \frac{1}{2}(L+L^{\intercal}), \quad \mathcal N = \frac{1}{2}(L - L^{\intercal}).
$$
We test GSS method \eqref{eq: nonlinear SOR 1} and AGSS method \eqref{eq:explicit} and stop the iteration until the infinite norm of the error is less than $10^{-6}$.

We randomly generate the matrices $A$ and $\mathcal N$ with different condition numbers and compute the averaged iteration numbers (in $5$ samples). We list the dependence of the condition number vs iteration numbers. 
As one iteration of GSS and AGSS takes similar operations, the CPU time of each method is proportional to the iteration numbers. 

First we set $\kappa(A) = 2$ and vary $\tilde{\kappa} (\mathcal N) = 10, 20, 40, 80$. Then $\kappa(L) \approx \tilde{\kappa} (\mathcal N)$, which means the convergence rate is dominated by the condition number of the skew-symmetric part. In Fig. \ref{fig: EE and GSS rate}, we plot $\tilde{\kappa}(\mathcal N)$ versus averaged iteration number.
The increase of iteration number shows the convergence rate of the explicit Euler scheme is proportional to $\tilde{\kappa}^2 (\mathcal N) $ while that of GSS method is proportional to $\tilde {\kappa} (\mathcal N)$. 

\begin{figure}
     \centering
     \begin{subfigure}[b]{0.45\textwidth}
         \centering
    \includegraphics[scale = 0.3]{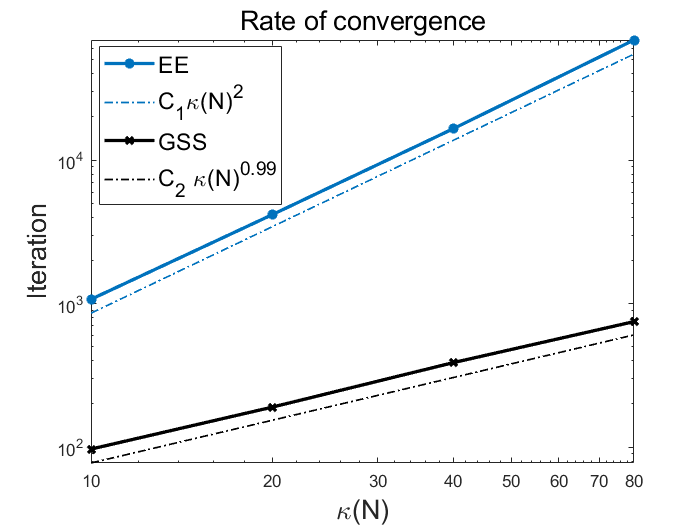}
\caption{$\tilde{\kappa}(\mathcal N)$ versus iteration number (averaged in 5 runs) for using the explicit Euler scheme \eqref{eq: EE gradient} and GSS method \eqref{eq: nonlinear SOR 1}.}
    \label{fig: EE and GSS rate}
     \end{subfigure}
     \hfill
     \begin{subfigure}[b]{0.45\textwidth}
         \centering
    \includegraphics[scale = 0.3]{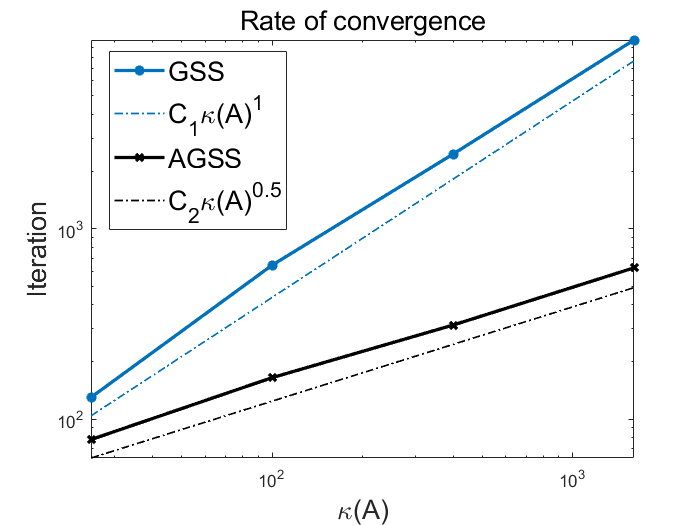}
\caption{$\kappa(A)$ versus iteration number (averaged in 5 runs) for using GSS method \eqref{eq: nonlinear SOR 1} and AGSS method \eqref{eq:explicit}.}
   \label{fig: GSS and AGSS rate}
     \end{subfigure}
     \hfill
        \caption{Convergence rate of GSS methods and AGSS methods.}
        \label{fig:three graphs}
\end{figure}

To show the acceleration we fix $\tilde \kappa(\mathcal N) = 2$ and vary $\kappa (A) = 25, 100, 400, 1600$. In this case, $\kappa (L) \approx \kappa(A)$. Therefore the condition number of the symmetric part dominates the convergence rate. We plot $\kappa(A)$ versus iteration number of using GSS method \eqref{eq: nonlinear SOR 1} and AGSS method \eqref{eq:explicit} in Fig. \ref{fig: GSS and AGSS rate}. The growth of AGSS iteration number is half of GSS iteration number, where the latter one is proportional to $\kappa (A)$. As the problem becomes ill-conditioned, AGSS is much more efficient.


\subsection{Convection-diffusion Model}\label{sec: Convection-diffusion model}
The second test is for the convection-diffusion equation
$$
-\Delta u  +\beta \cdot \nabla u = f,
$$
on the unit square $\Omega = [0,1] \times [0,1]$, with the Dirichlet boundary conditions and $\beta = (10, 10)^{\intercal}$. When applied the linear finite element discretization on uniform meshes with grid size $h$, we get the finite-dimensional linear system:
$$
L u = f.
$$
All experiments are implemented using the software package $i$FEM~\cite{chen2008ifem}.  
All tests are started from the zero vector, performed in MATLAB with double precision, and terminated when the current iterate satisfies the infinite norm of residual less than $\text{TOL} = 10^{-7}$. 

We compare the IMEX scheme \eqref{eq:semi-implicit} of AGSS method and the HSS method~\cite{bai2003hermitian}. One iteration of HSS method follows: given $u_k$, compute $u_{k+1}$ by
\begin{equation*}
    \begin{aligned}
        (\alpha I + A)u_{k+1/2} &=  (\alpha I - \mathcal N)u_{k} + f, \\
        (\alpha I + \mathcal N)u_{k+1} &=  (\alpha I - A)u_{k+1/2} + f
    \end{aligned}
\end{equation*}
where $A$ denotes the symmetric part from the diffusive terms, $\mathcal N$ is the skew-symmetric part corresponding to the convective terms, and the parameter $\alpha$ is set according to~\cite{bai2003hermitian} $\alpha = \sqrt{\lambda_{\min}(A)\lambda_{\max}(A)}$. 

For the AGSS scheme, we set the step size $\alpha = \sqrt{1/\kappa(A)}$. For both HSS and AGSS, we use direct solver in MATLAB to compute the matrix inverse. We see from Table \ref{table: HSS and AGSS iteration} that the iteration number of AGSS is more than that of HSS. We list the time complexity of inner solver in Table \ref{table: HSS and AGSS inner solver time} for $h = 1/128$ and $h = 1/256$. Though AGSS is free of solving $(\beta I + A)^{-1}$, herein computing the inverse of the skew-symmetric part is more time-consuming. 

To reduce the computational cost of the skew-symmetric part, we explore inexact AGSS (iAGSS) and inexact HSS (iHSS), utilizing the biconjugate gradient stabilized method (BiCGSTAB)~\cite{van1992bi} as the inner solver. Inexact inner solvers generally offer improved accessibility and efficiency.  In each step of iAGSS, the inner solver iterates until the tolerance is below $\text{TOL}=10^{-7}$ or for a maximum of 20 steps to compute $(\beta I + \mathcal N)^{-1}$. It is worth noting that iHSS requires the inner solver to achieve a tolerance of $\text{TOL}=10^{-9}$; otherwise, the outer iteration fails to converge. 

\begin{table}[htp]
	\centering
    \caption{Iteration steps (CPU time in seconds) for the convection-diffusion model problem. }
    \label{table: HSS and AGSS iteration}
	\renewcommand{\arraystretch}{1.125}
	\begin{tabular}{@{} c c c  c c c c @{}}
	\toprule
$h$	&	DoF	&	HSS	&	AGSS	&	iHSS	&	iAGSS
\smallskip \\  \hline 

1/32	&	961	&	133 (0.43s)	&	295 (0.71s)	&	133 (0.22s)	&	307 (0.22s)

\smallskip \\

1/64	&	3969	&	269 (4.6s)&	550	(7.6s)&	269 (1.5s)	&	550 (0.81s)

\smallskip \\

1/128	&	16129	&	536	(48s)&	1036 (64s)&	536 (17s)	&	1036 (7.1s)

\smallskip \\
 
1/256	&	65025	&	1078 (431s)	&	1949 (569s)	&	1078 (140s)	&	1948 (47s)	
\smallskip \\
\bottomrule
	\end{tabular}
\end{table}


\begin{table}[htp]
	\centering
    \caption{Inner solver time (in seconds) for the convection-diffusion model problem. }
    \label{table: HSS and AGSS inner solver time}
	\renewcommand{\arraystretch}{1.125}
	\begin{tabular}{@{}c || c c c  c || c c c c @{}}
	\toprule
		&  \multicolumn{4}{c||}{$h = 1/128$}&  \multicolumn{4}{c}{$h = 1/256$}\\
		\cline{2-9}	
	&	HSS	&	AGSS	&	iHSS	&	iAGSS & 	HSS	&	AGSS	&	iHSS	&	iAGSS \\  \hline 

$(\beta I + \mathcal N)^{-1}$	&	34	&	63	&	4.1	&	6.2	&	316	&	561	&	25	&	38	 
 \\

$(\beta I + A)^{-1}$	&	12	&	-	&	12	&	- &	110	&	-	&	109	&	-
 \\

Total	&	48	&	64	&	17	&	7.1	&	431	&	569	&	140	&	47	
 \\
\bottomrule
	\end{tabular}
\end{table}

Table \ref{table: HSS and AGSS iteration} reveals that iAGSS has a comparable iteration count to AGSS, yet iAGSS significantly outperforms other methods in terms of CPU time. The increase in iteration steps and CPU time underscores that AGSS and HSS share the same convergence rate. Examining the time complexity of the inner solver in Table \ref{table: HSS and AGSS inner solver time}, we observe that inexact solvers entail substantially lower computational costs. iAGSS emerges as the most efficient method, demonstrating superior performance in both solver times for symmetric and skew-symmetric parts.



\subsection{Empirical Risk Minimization} \label{sec: Empirical risk minimization}
Consider the regularized empirical risk minimization (ERM) with linear predictors, which is a classical supervised learning problem. Given a data matrix $B = [b_1, b_2, \cdots, b_n]^{\intercal} \in \mathbb{R}^{n \times m}$ where $b_i \in \mathbb{R}^m$ 
is the feature vector of the $i$-th data entry, the ERM problem aims to solve
$$
\min _{u \in \mathbb{R}^m} g(Bu)+f(u),
$$
where $g: \mathbb{R}^n \to \mathbb{R}$ is some convex loss function, $f: \mathbb{R}^m \to \mathbb{R}$ is a convex regularizer and $u \in \mathbb{R}^m$ is the linear predictor. Equivalently, we can solve the dual problem $\max _{p \in \mathbb{R}^n}\left\{-g^*(p)-f^*\left(-B^{\intercal} p\right)\right\}$ or the saddle point problem $$\min _{u \in \mathbb{R}^m} \max _{p \in \mathbb{R}^n}\left\{p^{\intercal} Bu-g^*(p)+f(u)\right\}.$$ The saddle point formulation is favorable in many scenarios, e.g., when such formulation admits a finite-sum structure~\cite{wang2017exploiting}, reduces communication complexity in the distributed setting~\cite{xiao2019dscovr} or exploits sparsity structure~\cite{lei2017doubly}.

We define the loss function as the mean square error, given by $g^*(p) = \frac{1}{2}\|p\|_G^2 $, induced by the $G$-norm associated with an SPD matrix $G$. Additionally, we incorporate an $\ell_2$ regularizer, where $f(u) = \frac{1}{2}\|u\|^2$. The ERM model is then formulated as a linear saddle point problem. In this formulation, we have $\nabla^2 F = \begin{pmatrix}
I & ~ 0 \\
0 & ~ G 
\end{pmatrix} $, and $\mathcal B^{\sym} = \begin{pmatrix}
0 & ~ B^{\intercal} \\
B & ~ 0
\end{pmatrix}$.

For our numerical experiments, we consider two cases:
\begin{itemize}
    \item Case 1: $\kappa(\mathcal B^{\sym}) \gg \kappa(G)$; 

    \item Case 2: $\kappa(\mathcal B^{\sym}) \ll \kappa (G)$.

    \item Case 3: $\kappa(\mathcal B^{\sym}) = \sqrt{\kappa (G)} $.
\end{itemize}
We implement the GSS method \eqref{eq:GSS_saddle} and AGSS method \eqref{eq:explicit saddle} with $\IV$ and $\IQ$  set as identities. Additionally, we provide a list of other first-order primal-dual algorithms for comparison:
\begin{itemize}
    \item Lifted primal-dual (LPD) method  for bilinearly coupled smooth minimax optimization proposed in~\cite{thekumparampil2022lifted}.

    \item AcceleratedGradient-OptimisticGradient (AG-OG) method~\cite{li2023nesterov} with restarting regime.

    \item Accelerated primal-dual gradient (APDG) method~\cite{kovalev2022accelerated} for smooth and convex-concave saddle-point problems with bilinear coupling.
    
    \item Optimistic gradient descent ascent (OGDA)  method~\cite{mokhtari2020convergence} in smooth convex-concave saddle point problem.
\end{itemize}

Among all the tests, we set the parameters as specified. AGSS, LPD, AG-OG, and APDG are first-order algorithms achieving optimal convergence rates, while OGDA obtains a sublinear rate for general saddle point problems. 

In Table \ref{tab: opt test 1}, we can observe that the performance of GSS and AGSS is comparable as the non-symmetric part dominate, i.e. $\kappa(\mathcal B^{\sym}) \gg \kappa(G)$, the acceleration is not helpful. However, when $\kappa(G)$ are relatively large, as shown in Table \ref{tab: opt test 2} and Table \ref{tab: opt test 3}, AGSS is much faster than GSS due to the acceleration technique. 

Overall, AGSS is more stable and efficient concerning iteration number and CPU time, especially when the condition number $\kappa(G)$ is large.

\begin{table}[htp]
\small
	\centering
  \caption{Iteration number (CPU time in seconds) for empirical risk minimization problem. $m = 2500, n =500$ and $\kappa(G) =2$.}
    \label{tab: opt test 1}
	\renewcommand{\arraystretch}{1.25}
	\begin{tabular}{@{} c c c  c c c c @{}}
	\toprule
$\kappa(\mathcal B^{\sym})$	&	GSS	&	AGSS	&	LPD	&	AG-OG	&	APDG	&	OGDA	
\smallskip \\  \hline 

$ 100$	&	1403 (0.63s)	&	1704 (0.73s)	&	2825 (1.2s)	&	3643 (2.1s)	&	5627 (4.4s)	&	2825 (1.6s)
\smallskip \\

$200$	&	2811 (1.2s)	&	3399 (1.5s)	&	5681 (2.4s)	&	5931 (3.6s)	&	11260 (8.7s)	&	5641 (3.4s)

\smallskip \\

$400$	&	5627 (2.1s)	&	6789 (2.7s)	&	11273 (4.3s)	&	9400 (5.3s)	&	22525 (17s)	&	11273 (6.3s)

\smallskip \\
$800$	&	11260 (4.3s)	&	13569 (5.6s)	&	22539 (9.3s)	&	19050 (11s)	&	45055 (34s)	&	22539 (12s)	
\smallskip \\
\bottomrule
	\end{tabular}
\end{table}


\begin{table}[htp]
\small
	\centering
    \caption{Iteration number (CPU time in seconds) for empirical risk minimization problem. $m = 2500, n =500$ and $\kappa(\mathcal B^{\sym}) = 2$. `$-$' means the convergence failed in 50000 steps.}
    \label{tab: opt test 2}
	\renewcommand{\arraystretch}{1.25}
	\begin{tabular}{@{} c c c  c c c c @{}}
	\toprule
$\kappa(G)$	&	GSS	&	AGSS	&	LPD	&	AG-OG	&	APDG	&	OGDA	
\smallskip \\  \hline 

$400$	&	4412 (1.7s)	&	192 (0.077s)	&	153 (0.071s)	&	827 (0.51s)	&	2750 (2.1s)	&	8843 (4.7s)
\smallskip \\

$800$	&	8838 (3.6s)	&	268 (0.11s)	&	177 (0.077s)	&	1304 (0.80s)	&	5493 (4.8s)	&	17695 (9.5s)
\smallskip \\

$1600$	&	17689 (7.0s)	&	377 (0.16s)	&	201 (0.08s)	&	2035 (1.2s)	&	10980 (8.4s)	&	35397 (20s)

\smallskip \\
$ 3200$	&	35366 (14s)	&	531 (0.24s)	&	274 (0.12s)	&	3399 (2.1s)	&	21953 (17s)	&	$>$50000		
\smallskip \\
\bottomrule
	\end{tabular}
\end{table}


\begin{table}[htp]
\small
	\centering
    \caption{Iteration number (CPU time in seconds) for empirical risk minimization problem. $m = 2500, n =500$ and $\kappa(\mathcal B^{\sym}) = \sqrt{\kappa(G)}$. `$-$' means the convergence failed in 50000 steps.}
    \label{tab: opt test 3}
	\renewcommand{\arraystretch}{1.25}
	\begin{tabular}{@{} c c c c  c c c c @{}}
	\toprule
$\kappa(G)$	&	$\kappa(B)$	&	GSS	&	AGSS	&	LPD	&	AG-OG	&	APDG	&	OGDA
\smallskip \\  \hline 
$160^2$	&	160	&	-	&	1815 (1.0s)	&	2162 (1.5s)	&	4920 (3.2s)	&	4436 (4.2s)	&	-	\smallskip \\
$320^2$	&	320	&	-	&	3346 (1.5s)	&	3911 (2.2s)	&	9804 (7.7s)	&	8061 (6.4s)	&	-	\smallskip \\
$640^2$	&	640	&	-	&	6130 (2.4s)	&	7007 (2.9s)	&	19501 (11s)	&	14532 (11s)	&	-	\smallskip \\
$1280^2$	&	1280	  &	-	&	11143 (5.9s)	&	12394 (6.4s)	&	38618 (25s)	&	25939 (23s)	&	-	
\smallskip \\
\bottomrule
	\end{tabular}
\end{table}

\section{Conclusion}\label{sec: conclusion}
In this paper, we propose GSS methods and AGSS methods for solving a class of strongly monotone operator equation $\mathcal A(x) = 0$ based on the splitting $\mathcal A(x) = \nabla F(x) + \mathcal Nx$. The proof of strong Lyapunov property bridges the design of dynamical system on the continuous level, the choice of Lyapunov function and the linear convergence of the numerical schemes (as discretization of the continuous flow). As direct applications, we derive optimal algorithms for strongly-convex-strongly-concave saddle point systems with bilinear coupling.

As it is well-known in the convex minimization, the optimal mixed type convergence rates is $\mathcal O(\min\{1/k^2, (1-1/\sqrt{\kappa(f)})^{-k})\}$~\cite{chen2021unified}, which enjoys a sub-linear rate as long as $f$ is convex only, i.e., $\mu = 0$. With more careful design of parameters, our AGSS may also achieve accelerated sub-linear convergence rates. Such method can be used as a smoother for deriving nonlinear multigrid methods, yielding iteration complexity free of problem size and condition number.


\bibliographystyle{amsplain}
\bibliography{Optimization}  





\end{document}